\documentclass[11 pt]{article}
\usepackage[margin=1in]{geometry}
\usepackage{amsmath,amsthm,amsfonts,graphicx,float}
\usepackage{amssymb,upgreek,stmaryrd,framed}
\usepackage{wrapfig,mathrsfs,xfrac,mathtools}
\usepackage{qtree,setspace,accents,verbatim}
\usepackage[x11names]{xcolor}
\usepackage[ruled,vlined]{algorithm2e}
\usepackage{bbm,epsdice}
\usepackage[normalem]{ulem}
\usepackage[english]{babel}
\usepackage{framed}
\usepackage[shortlabels]{enumitem}

\makeatletter\let\@@citation@@=\citation\renewcommand{\citation}[1]{\@@citation@@{#1}\@for\@tempa:=#1\do{\@ifundefined{cit@\@tempa}{\global\@namedef{cit@\@tempa}{}}{}}}\makeatother
\usepackage[linktocpage=true]{hyperref}
\usepackage[capitalize]{cleveref}
\makeatletter\def\@lbibitem[#1]#2#3\par{\@ifundefined{cit@#2}{}{\@skiphyperreftrue\H@item[\ifx\Hy@raisedlink\@empty\hyper@anchorstart{cite.#2\@extra@b@citeb}\@BIBLABEL{#1}\hyper@anchorend\else\Hy@raisedlink{\hyper@anchorstart{cite.#2\@extra@b@citeb}\hyper@anchorend}\@BIBLABEL{#1}\fi\hfill]\@skiphyperreffalse}\if@filesw\begingroup\let\protect\noexpand\immediate\write\@auxout{\string\bibcite{#2}{#1}}\endgroup\fi\ignorespaces\@ifundefined{cit@#2}{}{#3}} \def\@bibitem#1#2\par{\@ifundefined{cit@#1}{}{\@skiphyperreftrue\H@item\@skiphyperreffalse\Hy@raisedlink{\hyper@anchorstart{cite.#1\@extra@b@citeb}\relax\hyper@anchorend}}\if@filesw\begingroup\let\protect\noexpand\immediate\write\@auxout{\string\bibcite{#1}{\the\value{\@listctr}}}\endgroup\fi\ignorespaces\@ifundefined{cit@#1}{}{#2}}\makeatother

\newcommand{\Ex}{\mathbb{E}}

\newcommand{\N}{\mathbb{N}}

\newcommand{\PP}{\mathbb{P}}

\newcommand{\Z}{\mathbb{Z}}

\newcommand{\cS}{\mathcal{S}}


\newcommand{\bbone}{\boldsymbol{\mathbbm1}}

\newcommand{\cev}[1]{\reflectbox{\ensuremath{\vec{\reflectbox{\ensuremath{#1}}}}}}
\DeclareMathOperator{\tail}{tail}

\newcommand{\nth}{^{\text{th}}} 

\newcommand{\eps}{\upvarepsilon}



\renewcommand{\rm}[1]{\mathrm{#1}}
\renewcommand{\tt}[1]{\texttt{#1}}

\newcommand{\ett}[1]{\emph{\texttt{#1}}}

\newcommand{\sceil}[1]{\lceil #1 \rceil}
\newcommand{\floor}[1]{\left\lfloor #1 \right\rfloor}


\DeclareMathOperator{\var}{Var}

\DeclareMathOperator{\bin}{Bin}
\DeclareMathOperator{\geom}{Geom}

\DeclareMathOperator{\rev}{rev}


\newtheorem{theorem}{Theorem}[section]
\newtheorem{lemma}[theorem]{Lemma}
\newtheorem{problem}[theorem]{Problem}

\newtheorem{proposition}[theorem]{Proposition}
\newtheorem{corollary}[theorem]{Corollary}
\newtheorem{claim}{Claim}[theorem]
\newtheorem{conjecture}[theorem]{Conjecture}

\newenvironment{numtheorem}[1]
{\innercustomtheorem}
  {\endinnercustomtheorem}

\newenvironment{numlemma}[1]
{\innercustomlemma}
{\endinnercustomlemma}

\theoremstyle{definition}
\newtheorem*{definition}{Definition}

\theoremstyle{remark}
\newtheorem*{remark}{Remark}
\newtheorem*{example}{Example}

\newenvironment{subproof}[1][\proofname]{
\begin{proof}[#1]}{\end{proof}}


\newcommand{\emptyword}{\epsilon}
\newcommand{\ibuf}{I}
\newcommand{\abuf}{J}
\newcommand{\turn}{z}
\DeclareMathOperator{\greedy}{gr}
\DeclareMathOperator{\NB}{NB}

\numberwithin{equation}{section}

\newcommand{\get}{=}

\title{Asymptotically half of binary words are shuffle squares}
\author{Xiaoyu He\thanks{School of Mathematics, Georgia Institute of Technology, Atlanta, GA 30332. Email: xhe399@gatech.edu.} \ \ and Logan Post\thanks{School of Mathematics, Georgia Institute of Technology, Atlanta, GA 30332. Email: lpost3@gatech.edu}}
\begin{document}
\maketitle

\begin{abstract}
    A \textit{binary shuffle square} is a binary word of even length that can be partitioned into two disjoint, identical subwords. Huang, Nam, Thaper, and the first author conjectured that as $n\rightarrow \infty$, asymptotically half of all binary words of length $2n$ are shuffle squares. We prove this conjecture in a strong form, by showing that the number of binary shuffle squares of length $2n$ is $(\frac{1}{2} - o(n^{-1/15})) 2^{2n}$. 
\end{abstract}

\section{Introduction}

Let $[k]^{*}$ be the collection of words of finite length over the finite alphabet $[k] = \{1,\ldots, k\}$, and say $s'\in [k]^*$ is a \textit{subword} of $s \in [k]^*$ if $s'$ is a (not necessarily contiguous) subsequence of $s$. A \textit{shuffle square} is a word $s\in [k]^*$ that can be partitioned into two disjoint, identical subwords. Shuffle squares were first studied by Henshall, Rampersad, and Shallit~\cite{henshall}, in the theory of formal languages. In this area, Boulteau and Vialette~\cite{binarynphard} proved that recognizing binary shuffle squares is NP-complete, building on work of Rizzi and Vialette~\cite{unshufflelargek} and Buss and Soltys~\cite{unshuffle7buss} on larger alphabets.

As shuffle squares must have even length, it is convenient to say that a word of length $2n$ has \textit{semi-length} $n$. Let $\cS_k(n)$ be the set of shuffle squares of semi-length $n$ over the alphabet $[k]$. Henshall, Rampersad, and Shallit asked for the asymptotics of $|\cS_k(n)|$, and gave conjectural asymptotic formulas for $n$ fixed and $k\rightarrow\infty$. These formulas were verified by the first author, Huang, Nam, and Thaper~\cite{HHNT}, who also posed the following surprising conjecture in the opposite regime, where $k=2$ is fixed and $n\rightarrow \infty$, supported by numerical evidence from the OEIS~\cite{oeis}.

\begin{conjecture}[\cite{HHNT}]
Asymptotically half of all even-length binary words are shuffle squares, that is, $|\cS_2(n)| = (\frac{1}{2}-o(1)) 2^{2n}$.
\end{conjecture}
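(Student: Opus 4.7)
The plan is to reformulate shuffle-squareness in terms of non-nesting perfect matchings (equivalently, Dyck paths) and then apply the second-moment method to the number of valid matchings.

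\textbf{Reformulation.} A binary word $w \in \{0,1\}^{2n}$ is a shuffle square if and only if there exists a non-nesting perfect matching $M$ of $[2n]$ such that every pair $\{i,j\} \in M$ satisfies $w_i = w_j$. Indeed, given a shuffle with two copies of a base word occupying positions $a_1 < \cdots < a_n$ and $b_1 < \cdots < b_n$, the pairing $\{(a_i, b_i)\}_{i=1}^n$ is automatically non-nesting since both sequences are sorted; conversely, the two sides of any non-nesting matching are the ``openings'' and ``closings'' of a Dyck path and form the two copies of a shuffle. Non-nesting perfect matchings of $[2n]$ are in bijection with Dyck paths, so there are $C_n$ of them. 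Let $X(w)$ denote the number of such valid matchings for $w$, so $w$ is a shuffle square iff $X(w) \geq 1$.

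\textbf{Second moment and Paley--Zygmund.} For uniform $w$, each fixed Dyck matching $M$ imposes $n$ independent pair-equality constraints, giving $\Ex[X] = C_n \cdot 2^{-n}$. Since $X(w) > 0$ forces $w$ to be parity-valid, $\Pr[X > 0 \mid w \text{ parity-valid}] = 2 \Pr[X > 0]$. Given two Dyck matchings $M_1, M_2$, the multigraph $M_1 \cup M_2$ is $2$-regular and decomposes into even cycles alternating $M_1$- and $M_2$-edges; if there are $c = c(M_1, M_2)$ such cycles, then the joint system of pair-equality constraints has solution space of dimension $c$ over $\mathbb{F}_2$, whence
\[
\Ex[X^2] \;=\; 2^{-2n} \sum_{M_1, M_2} 2^{c(M_1, M_2)}.
\]
The Paley--Zygmund inequality then yields
\[
\Pr[X > 0 \mid w \text{ parity-valid}] \;\geq\; \frac{2}{\Ex_{M_1, M_2}[2^{c(M_1, M_2)}]},
\]
where the expectation is over independent uniform Dyck matchings. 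The target bound reduces to showing $\Ex_{M_1, M_2}[2^{c(M_1,M_2)}] \leq 2 + o(n^{-1/15})$.

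\textbf{Main obstacle.} The crux of the proof is this sharp estimate on $\Ex[2^c]$. Heuristically, two ``generic'' Dyck matchings should produce a union $M_1 \cup M_2$ dominated by a single long cycle of length $2n$, contributing $c = 1$ and $2^c = 2$; correlated pairs---especially those sharing many edges, which produce $2$-cycles each contributing a factor of $2$---must be bounded tightly. The plan is to stratify pairs by their cycle-length profile $(\ell_1, \ldots, \ell_c)$, enumerate each stratum using the Catalan structure (possibly via transfer-matrix or saddle-point asymptotics on Dyck-path generating functions), and show that atypical profiles contribute $o(n^{-1/15})$. Small cases are misleading (direct computation yields $\Ex[2^c] = 3, 4$ for $n = 2, 3$); the asymptotic concentration of the cycle profile is what drives the estimate, and the exponent $n^{-1/15}$ should emerge from optimizing a cutoff threshold between ``short'' and ``long'' cycles in the enumeration, balancing a local central limit estimate against the count of bad configurations.
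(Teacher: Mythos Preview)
Your reformulation via non-nesting matchings and the second-moment setup are correct, but the proof collapses at the crucial estimate $\Ex_{M_1,M_2}[2^{c(M_1,M_2)}]\le 2+o(1)$. This is simply false: the values $2,3,4$ you computed for $n=1,2,3$ are not misleading small cases but the beginning of the exact identity
\[
\sum_{M_1,M_2\ \text{non-nesting}} 2^{c(M_1,M_2)} \;=\; \sum_{w\in\{0,1\}^{2n}} X(w)^2 \;=\; C_n\binom{2n}{n},
\]
so $\Ex[2^c]=\binom{2n}{n}/C_n=n+1$ for every $n$. Hence Paley--Zygmund yields only $\Pr[X>0]\ge 1/(n+1)$, which is \emph{weaker} than the $\binom{2n}{n}4^{-n}\sim 1/\sqrt{\pi n}$ already obtained by the greedy algorithm. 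The heuristic that a generic pair of Dyck matchings has a single giant cycle is not the issue; the second moment is dominated by atypical pairs. Concretely, an adjacent edge $(i,i+1)$ lies in a uniformly random non-nesting matching with probability $C_{n-1}/C_n\to 1/4$, so two independent such matchings share $\Theta(n)$ adjacent edges in expectation, each contributing a $2$-cycle. Equivalently, $X$ is wildly non-concentrated on its support: $X(0^{2n})=C_n$, while a typical shuffle square has $X$ of order $C_n/2^n$, so $\Ex[X^2]$ cannot be close to $2(\Ex X)^2$.

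The paper's approach is entirely different and does not use second moments. It tracks the evolving set $B_t(s)$ of all ``buffers'' (words $w$ for which $s[1,t]$ admits a partition $s(A_1)=s(A_2)\circ w$) and proves two things: (i) a \emph{boosted greedy algorithm} that occasionally makes non-greedy choices with bounded look-ahead produces a buffer thread whose length has negative drift, so the shortest buffer in $\Sigma_2$ stays small in Ces\`aro mean; and (ii) a buffer-set absorption lemma showing that whenever a short $\Sigma_2$-buffer exists, with high probability $B_t$ also contains a buffer of length $\le 1$. These combine, via a time-splitting argument at a good intermediate index, to show $\Pr[\emptyword\in B_{2n}]=\tfrac12-o(n^{-1/15})$. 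Nothing in this argument resembles a variance computation over pairs of matchings.
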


Note that shuffle squares must have an even count of each letter, so this conjecture states that almost all words satisfying this obvious parity constraint are actually shuffle squares. Towards this, they proved using a greedy algorithm that $|\cS_2(n)| \ge \binom{2n}{n} = \Theta(n^{-1/2}2^{2n})$. Our main theorem resolves this conjecture in the affirmative with an explicit polynomial error term.

\begin{theorem}\label{thm:square}
We have $|\cS_2(n)|=(\frac{1}{2}-o(n^{-1/15})) 2^{2n}$.
\end{theorem}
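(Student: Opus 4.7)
The plan is to show that nearly every binary word of length $2n$ with an even count of each letter is a shuffle square. Since exactly half of all binary words of length $2n$ satisfy this obvious parity condition, this would immediately imply Theorem~\ref{thm:square}. My starting point is the FIFO greedy unshuffling algorithm of \cite{HHNT}: process $w = w_1 \cdots w_{2n}$ left-to-right while maintaining a queue $B$; dequeue if $B$ is non-empty and its front equals $w_i$, else enqueue $w_i$. Vanilla greedy succeeds iff $|B|=0$ at step $2n$, and since $|B|$ evolves as a reflected random walk, this happens with probability $\Theta(n^{-1/2})$, which recovers $|\cS_2(n)| \ge \binom{2n}{n}$.

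The key enabling observation is that $w$ is a shuffle square iff \emph{some} admissible sequence of pushes and pops ends with $|B|=0$: pushes are forced when $B$ is empty or $B[0] \neq w_i$, but at each equality step (when $B[0]=w_i$) there is a choice between pushing and popping. Vanilla greedy rigidly pops at every equality; the extra flexibility of "choice greedy" is precisely what separates the $\Theta(n^{-1/2})$ baseline from the true density. Indeed, identifying a shuffle decomposition $w=A\cup B$ with the strategy that pushes at positions of $A$ and pops at positions of $B$, one checks that choice greedy succeeds iff $w$ is a shuffle square. So it suffices to show that for all but $o(n^{-1/15})$ of the even-count words, the choice game admits some winning strategy.

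I would attack this by designing an explicit non-greedy strategy and proving that it succeeds outside a small exceptional set. A natural template is to divide $w$ into blocks of length $\ell = n^{\alpha}$ for a carefully-chosen $\alpha$, run a local unshuffle on each block that is permitted to carry a small residual buffer into the next block, and then prove via a multi-scale random-walk analysis that the buffer stays controllably small across blocks and hits $0$ exactly at step $2n$ with probability $1 - o(n^{-1/15})$. A complementary ingredient is a surgery step: when the buffer remains non-empty at the end, identify a short window near the end whose choice sequence can be rewired to zero it out, and show this rewiring works for all but the exceptional words.

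The main obstacle is sharpening the error from the naive $\Theta(n^{-1/2})$ baseline down to $o(n^{-1/15})$, a full factor of $\sqrt n$ improvement. This will require carefully combining Chernoff-type deviation bounds across blocks with local central limit refinements within blocks, and plausibly iterating the surgery step at multiple scales. The peculiar exponent $1/15$ presumably emerges as the optimum of a multi-parameter tradeoff among block size $\ell$, buffer threshold, and number of iterations of the correction scheme; pinning down the correct exponent and proving the correction really drives the failure rate to $o(n^{-1/15})$ will likely be the most delicate part of the argument.
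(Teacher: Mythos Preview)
Your framing via the choice-greedy / buffer-thread characterization is exactly right, and the paper uses it too. But the proposed attack has a structural gap: any single deterministic strategy (block-based or otherwise) that produces a buffer thread with negative drift will, at best, have its buffer length at time $2n$ distributed near $0$ according to some stationary-like law---so it hits \emph{exactly} $0$ with constant probability, not with probability $1-o(n^{-1/15})$. The paper's boosted greedy algorithm (Section~\ref{section:boostedgreedy}) faces precisely this limitation: all it establishes is that the minimum $\Sigma_2$-buffer length $Y_t$ has a bounded Ces\`aro moment (Lemma~\ref{lem:boundedbuffer}), not that $Y_{2n}=0$ with high probability. Your ``multi-scale random-walk analysis'' and ``surgery step'' are gestured at but do not supply a mechanism for closing this gap from constant probability to $1-o(n^{-1/15})$.

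The missing ingredient is the buffer \emph{set} analysis of Section~\ref{section:buffersetanalysis}. Rather than tracking one thread, the paper studies the full set $B_t$ of buffers and proves a collapse phenomenon: if \emph{some} buffer in $\Sigma_2$ of length $\le k$ exists at time $t$, then with high probability a buffer of length $\le 1$ already exists (Lemma~\ref{lem:delta_event}). This is driven by the quantities $\delta_t(1w1,w)=\PP[1w1\in B_t,\ w\notin B_t]$, which the decomposition in Lemma~\ref{lem:pt_decomposition} forces to sum to $O(k\sqrt n)$ simply because $\PP[\emptyword\in B_{4n}]\le 1$. Combining this with the boosted-greedy Ces\`aro bound yields a deterministic odd time $t^*$ at which, with probability $1-o(n^{-1/15})$, both $s[1,t^*]$ and $\rev(s[t^*+1,2n])$ admit single-bit buffers; the even-count parity then forces these bits to match, completing the shuffle decomposition. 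Your surgery idea is a shadow of this two-sided trick, but it lacks the $\delta_t$ mechanism that explains \emph{why} a short buffer can be upgraded to a length-$1$ buffer outside a small exceptional set. The exponent $1/15$ arises from balancing $nk^{-1/3+\gamma}$ (from the $\delta<1/3$ moment bound) against $k^2\sqrt n$ (from the $\delta_t^*$ sum), optimized at $k\asymp n^{3/14}$.
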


The exponent $-1/15$ is not optimized, but within a modest factor of the best possible using our arguments. It would be interesting to decide if the error term is actually polynomial or exponential. 

Closely related to the shuffle square problem is the ``twins'' problem introduced by Axenovich, Person, and Puzynina~\cite{Axenovich}. For a word $s\in [k]^*$, let $\rm{LT}(s)$ be the maximum semi-length of a subword of $s$ that is a shuffle square, so $s\in \cS_k(n)$ if and only if $\rm{LT}(s)=n$. Equivalently, $\rm{LT}(s)$ is the largest $m$ such that $s$ contains ``twins'' of length $m$, i.e. disjoint identical subwords of length $m$. Axenovich, Person, and Puzynina proved that for any $s\in \{0,1\}^{2n}$, we have $\rm{LT}(s)=(1-o(1))n$. More progress has been made on twins for various alphabets~\cite{basu,bukhzhou}, and the twins problem is related to regularity lemmas for words, the longest common subsequence problem \cite{bukhhogenson,bukhma,ChvatalSankoff,He-Li}, and codes for correcting deletion errors \cite{Guruswami-He-Li}.

While previous work dealt with the worst-case behavior of $\rm{LT}(s)$, \cref{thm:square} implies the best possible bound for its average-case behavior. 
\begin{corollary}\label{corollary:lt} For almost all $s\in\{0,1\}^{n}$, $\rm{LT}(s)\geq \sceil{\frac{n}{2}}-1$.
\end{corollary}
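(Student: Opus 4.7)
The plan is to apply \cref{thm:square} to a canonical subword $s^{\star}$ of $s$ obtained by deleting at most two characters so that both letter counts become even. Given $s\in\{0,1\}^n$, define $s^{\star}$ by removing from $s$ the last $0$ if the $0$-count in $s$ is odd, and the last $1$ if the $1$-count is odd. Then $s^{\star}$ has both counts even and $|s^{\star}|\in\{n,n-1,n-2\}$. When $n=2m$, the counts in $s$ are either both even or both odd, so $|s^{\star}|\in\{2m,2m-2\}$; hence $s^{\star}\in\cS_2(|s^{\star}|/2)$ forces $\mathrm{LT}(s)\geq m-1=\sceil{n/2}-1$. When $n=2m+1$, exactly one of the counts in $s$ is odd, so $|s^{\star}|=2m$, and $s^{\star}\in\cS_2(m)$ forces $\mathrm{LT}(s)\geq m=\sceil{n/2}-1$. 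So it suffices to show $s^{\star}$ is a shuffle square for almost all $s$.

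The key combinatorial step is to bound the fibers of the map $\phi\colon s\mapsto s^{\star}$. Any reinserted ``last $0$'' of $s$ must lie strictly after all $0$'s of $s^{\star}$, i.e.\ within $s^{\star}$'s trailing run of $1$'s, contributing at most $\rho(s^{\star})+1$ possible insertion positions; symmetrically for a reinserted last $1$. Combining the two cases (and noting that $s^{\star}$ ends in only one type of character, so at most one of the two trailing runs is nontrivial), each $s^{\star}$ has at most $\rho(s^{\star})+2$ preimages under $\phi$, where $\rho(s^{\star})$ denotes the length of $s^{\star}$'s trailing monochromatic run. By \cref{thm:square}, the set $B_k$ of ``bad'' good-parity words $s^{\star}\in\{0,1\}^{2k}$ with $s^{\star}\notin\cS_2(k)$ has size $\epsilon_k\cdot 2^{2k}$ with $\epsilon_k=o(k^{-1/15})$.

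Combining the fiber bound with the elementary fact that the number of $s^{\star}\in\{0,1\}^{L}$ whose trailing run has length $\geq k$ is at most $2^{L-k+1}$, a standard dyadic split at the threshold $k^{\star}=\log_2(1/\epsilon_k)$ yields
\[
\sum_{s^{\star}\in B_k}(\rho(s^{\star})+2)=O\bigl(\epsilon_k\log^2(1/\epsilon_k)\cdot 2^{2k}\bigr).
\]
Since $x\log^2(1/x)\to 0$ as $x\to 0^+$, this bound is $o(2^n)$, so the fraction of $s\in\{0,1\}^n$ whose $s^{\star}$ is bad is $o(1)$, proving the corollary. The main conceptual move is choosing $\phi$ so that its fibers are controlled by the single statistic $\rho$; the main technical point is the dyadic estimate, which converts the ``$\tfrac{1}{2}-o(n^{-1/15})$'' lower bound on the codomain into an $o(1)$ error on the domain even though some individual fibers have size as large as $\Theta(n)$.
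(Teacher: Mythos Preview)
Your proof is correct, but it takes a substantially more laborious route than the paper's. The paper argues in the \emph{forward} direction: sample $s$ uniformly from even-parity words of length $2n$, note that by \cref{thm:square} it is a shuffle square with probability $1-o(1)$, and then observe that the truncations $s[1,2n-1]$ and $s[1,2n-2]$ are \emph{exactly} uniform on $\{0,1\}^{2n-1}$ and $\{0,1\}^{2n-2}$ respectively (since the deleted suffix bits are determined, or nearly so, by the parity constraint). The $\mathrm{LT}$ bound transfers immediately from $s$ to its prefixes, and no fiber-counting is needed.

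Your approach goes in the \emph{backward} direction: from an arbitrary $s$ you pass to a canonical even-parity subword $s^\star$ and then must repair the non-uniformity of the induced distribution on $s^\star$. Your fiber bound $|\phi^{-1}(s^\star)|\le\rho(s^\star)+2$ is correct (I checked all three cases $|s^\star|\in\{n,n-1,n-2\}$), and the dyadic estimate is valid---indeed it gives $O(\epsilon_k\log(1/\epsilon_k)\cdot 2^{2k})$ rather than the $\log^2$ you wrote, though this only strengthens your conclusion. Both arguments use only the qualitative $\epsilon_k=o(1)$ from \cref{thm:square}, not the polynomial rate. The paper's trick of passing to prefixes (which are automatically uniform) is worth internalizing: it replaces your entire fiber analysis with a one-line observation.
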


\begin{proof} Let $s\in \{0,1\}^{2n}$ be uniformly random among words with an even number of $0$'s and $1$'s. If $\rm{LT}(s)=n$, then by deleting one or two bits, we immediately have $\rm{LT}(s[1,2n-1])=n-1$ and $\rm{LT}(s[1,2n-2])\geq n-2$. Thus by \cref{thm:square},
\[
\PP[\rm{LT}(s[1,2n-2])\geq n-2]\geq \PP[\rm{LT}(s[1,2n-1])=n-1]\geq \PP[s\in \cS_2(n)]=1-o(1).
\]
Moreover, $s[1,2n-1]$ is uniformly random in $\{0,1\}^{2n-1}$ and $s[1,2n-2]$ is uniformly random in $\{0,1\}^{2n-2}$, implying the corollary.
\end{proof}

In \cite{HHNT}, it was conjectured that almost every binary word can be made into a shuffle square by deleting at most three bits - \cref{corollary:lt} shows that it actually suffices to delete two (which is best possible).

\par{\bf Organization.} In Section~\ref{section:preliminaries}, we introduce the notion of a \textit{buffer} for a binary word, thereby reformulating the problem as a discrete stochastic process on subsets of $\{0,1\}^*$. We define and analyze the greedy algorithm using buffers and obtain asymptotic results on its behavior. In Section~\ref{section:buffersetanalysis}, we analyze the evolution of the set of all buffers to show that a random word $s$ can ``absorb" small defects in an almost-perfect partition. In Section~\ref{section:boostedgreedy}, we give an improved algorithm which produces a shuffle square with constant probability. This boosted greedy algorithm stores local information about the word and occasionally backtracks for an improvement. Together with the buffer set analysis, this proves~\cref{thm:square}. In Section~\ref{section:conclusion}, we discuss the shuffle square problem on larger alphabets, and we provide a boosted greedy algorithm for $k$-ary words which gives a new lower bound for $|\cS_k(n)|$.

\section{Preliminaries}\label{section:preliminaries}

For a word $s\in [k]^n$ we use $s(i)$ to denote the $i\nth$ letter of $s$. We use $u\circ v$ or simply $uv$ to denote concatenation of words, and $\emptyword$ to denote the empty word. The word $\rev(s)$ is obtained by reversing the order of indices, so $\rev(s)(i)=s(n-i+1)$. For a subset $A\subseteq[n]$, we write $s(A)$ for the subword of $s$ indexed by $A$. For a subinterval $[i,j]$ of $[n]$, we write $s[i,j] \coloneqq s([i,j])$. We also define $\tail(s) \coloneqq s[2, n]$ for the operation of removing the first letter.

The negative binomial distribution appears many times in our calculations, so we denote $\rm{NB}(k)\coloneqq \rm{NegBin}(k,1/2)$. By convention, $X\sim \rm{NB}(k)$ is supported on $\N=\{0,1,\ldots\}$, and we have $\Ex[X]=k$ and $\var[X]=2k$.

In this paper we limit our focus to binary words on alphabet $\{0,1\}$. For a word $s\in \{0,1\}^n$, we say that $w\in\{0,1\}^*$ is a \textit{buffer} for $s$ if there is a partition $(A_1,A_2)$ of the index set $[n]$ such that $s(A_1)=s(A_2)\circ w$. Let $B_t(s)$ be the set of buffers of the prefix $s[1,t]$. Observe that $s\in\{0,1\}^n$ is a shuffle square if and only if $\emptyword \in B_n(s)$. When the word $s$ is clear from context, we write $B_t$ for $B_t(s)$.

\begin{proposition}\label{prop:buffersetdefn}Fix some word $s\in \{0,1\}^n$. Then $B_0=\{\emptyword\}$, and for all $1\leq t\leq n$,
\begin{equation*}
B_t=\big\{w\circ s(t)\mid  w\in B_{t-1}\big\}\cup \big\{\tail(w)\mid w\in B_{t-1},\ w(1)=s(t)\big\}.
\end{equation*}
\end{proposition}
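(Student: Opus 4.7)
The plan is to prove the recursive description of $B_t$ by relating partitions of $[t-1]$ to partitions of $[t]$, split by where the new index $t$ is placed. The base case $B_0 = \{\emptyword\}$ is immediate from the trivial partition $(\emptyset,\emptyset)$, which yields the buffer $\emptyword$. For the containment $\text{RHS} \subseteq B_t$, I would take any $w\in B_{t-1}$ witnessed by a partition $(A_1,A_2)$ of $[t-1]$ with $s(A_1)=s(A_2)\circ w$ and extend it in the two natural ways. Placing $t$ in the first part gives $s(A_1\cup\{t\}) = s(A_1)\circ s(t) = s(A_2)\circ(w\circ s(t))$, showing $w\circ s(t)\in B_t$. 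Alternatively, if $w(1)=s(t)$, placing $t$ in the second part gives $s(A_1)=s(A_2)\circ w = s(A_2\cup\{t\})\circ \tail(w)$, showing $\tail(w)\in B_t$.

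For the reverse containment $B_t \subseteq \text{RHS}$, I would take $w\in B_t$ witnessed by $(A_1,A_2)$ with $s(A_1)=s(A_2)\circ w$ and split on where $t$ lies. If $t\in A_2$, then $t=\max A_2$ contributes the final letter of $s(A_2)$; removing it gives a partition of $[t-1]$ with buffer $s(t)\circ w$, so $s(t)\circ w\in B_{t-1}$ and $w=\tail(s(t)\circ w)$ lies in the second set on the right-hand side. If $t\in A_1$ and $w\neq \emptyword$, then $w$ must end in $s(t)$; writing $w=w'\circ s(t)$, the partition $(A_1\setminus\{t\},A_2)$ of $[t-1]$ witnesses $w'\in B_{t-1}$, placing $w = w'\circ s(t)$ in the first set.

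The only subtle case is $t\in A_1$ with $w=\emptyword$, in which $|A_1|=|A_2|$ and excising $t$ from $A_1$ would leave the ``longer side'' shorter than $A_2$, so the induction hypothesis does not directly apply. The fix is to swap the two sides: the partition $(A_2, A_1\setminus\{t\})$ of $[t-1]$ satisfies $s(A_2)=s(A_1\setminus\{t\})\circ s(t)$, giving $s(t)\in B_{t-1}$, and then $\emptyword = \tail(s(t))$ falls into the second set on the right-hand side. The main obstacle is really just noticing this swap, since for every other configuration the bijection between extended and unextended partitions is transparent; once the edge case $|A_1|=|A_2|$ is absorbed into the second set via a length-one buffer, the proposition follows.
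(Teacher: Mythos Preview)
Your proof is correct and takes essentially the same approach as the paper's: both relate partitions of $[t]$ to partitions of $[t-1]$ by casing on whether $t$ lands in $A_1$ or $A_2$, and both handle the edge case $w=\emptyword$ by swapping the two parts. Your write-up is simply more explicit about the two containments and the swap, whereas the paper compresses the argument into a few lines.
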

\begin{proof} Observe that $w'\in B_t$ if and only if there is a partition $s(A_1),s(A_2)$ of $s[1,t-1]$ such that either $s(A_1\cup \{t\})=s(A_2)\circ w'$ or else $s(A_1)=s(A_2\cup \{t\})\circ w'$. If $w'=\emptyword$, then the two cases are identical up to exchanging $A_1$ and $A_2$. Otherwise, in the first case $w'=w\circ s(t)$ with $w\in B_t$. In the second case we have $w'=\tail(w)$ with $w=s(t)\circ w'\in B_t$. 
\end{proof}

Thus, if $s$ is uniformly random in $\{0,1\}^n$ then by reading off one bit at a time, $B_t\subseteq \{0,1\}^*$ evolves by a discrete stochastic process in $t$, where $|B_t|$ is nondecreasing. 

\subsection{Buffer threads and the Greedy Algorithm}\label{section:greedyalgorithm}

We say that a sequence $(w_t)_{t=0}^n$ is a \textit{buffer thread} for a word $s\in \{0,1\}^{n}$ if $w_0=\emptyword$ and for all $t\geq 1$, either $w_t=w_{t-1}\circ s(t)$ or both $w_t=\tail(w_{t-1})$ and $s(t)=w_{t-1}(1)$. Observe that for all $t$, $w_t\in B_t(s)$. Conversely, for every word $w\in\{0,1\}^*$, we have $w\in B_t$ if and only if there is some buffer thread $(w_t)_t$ for $s$ with $w_t=w$. Moreover, the buffer thread $(w_t)_{t=0}^n$ exactly encodes a partition of $[n]$ for which $w_n$ is the buffer. Indeed, if we let $A_1$ be the set of indices $t$ for which $w_t = w_{t-1} \circ s(t)$, and $A_2 = [n]\setminus A$, we obtain that $s(A_1) = s(A_2) \circ w_n$.

To show that $s$ is close to a shuffle square, it suffices to construct a buffer thread such that $w_n$ is short. The following greedy algorithm from \cite{HHNT} obtains a particularly simple buffer thread.

\begin{definition} The greedy algorithm $\greedy:\{0,1\}^*\to \{0,1\}^*$ computes one buffer for an input $s$. We define the function recursively: $\greedy(\emptyword)=\emptyword$, and for all $s\in \{0,1\}^t$, if $\greedy(s[1,t-1])=w$, then
\begin{equation}\label{eq:greedydefn}
\greedy(s)=\begin{cases}
w\circ s(t)&\text{ if $s(t)\neq w(1)$}\\
\tail(w)&\text{ if $s(t)= w(1)$}.
\end{cases}
\end{equation}
Essentially, this algorithm always chooses to shorten the buffer when possible. This gives a buffer thread $(\greedy_t(s))_{t\geq 0}$ where $\greedy_t(s)=\greedy(s[1,t])$.
\end{definition}

\begin{example}For the word $s=100011100$, we obtain the following sequence:
\begin{center}
\begin{tabular}{c|ccccccccc}
$s(t)$&1&0&0&0&1&1&1&0&0\\
$\greedy_t(s)$&1& 10 &100 &1000& 000& 0001& 00011& 0011 &011
\end{tabular}
\end{center}
\end{example}
Define the set $\Sigma_2=\{0^a1^b,1^a0^b\mid a,b\geq 0\}$, the binary words with at most two runs. We observe that $\greedy_t(s)\in \Sigma_2$ for any time $t$ and any binary word $s$. Indeed, if $\greedy_{t-1}(s)=0^a1^b$ where $a>0$, then we will have $\greedy_t(s)\in \{0^{a-1}1^b,0^a1^{b+1}\}$, depending on the value of $s(t)$. Additionally, note the parity constraint $|\greedy_t(s)|\equiv t\pmod 2$ for all $t\geq 0$. The following proposition essentially rephrases Lemma 5.2 in \cite{HHNT}.

\begin{proposition}\label{prop:greedysrw} If $s\in \{0,1\}^\omega$ is uniformly random, then $(|\greedy_t(s)|)_{t=0}^\infty$ is a simple random walk on $\N$ with a reflective barrier at $0$. As a result, for all $t\geq 0$, we have $\PP[\greedy_{2t}(s)=\emptyword]={2t\choose t}2^{-2t}$. If $k\neq 0$ and $k\equiv t\pmod 2$, then $\PP[|\greedy_t(s)|=k]=2{t\choose \frac{t+k}{2}}2^{-t}$.
\end{proposition}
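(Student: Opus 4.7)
The plan is to verify directly that $(|\greedy_t(s)|)_{t=0}^\infty$ has the Markov transition kernel of a reflected simple random walk, and then read off the distribution by coupling with an unrestricted simple random walk on $\Z$.

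First I would inspect the recursion for $\greedy$ line by line. Writing $w = \greedy_{t-1}(s)$, equation~\eqref{eq:greedydefn} gives $|\greedy_t(s)| = |w| + 1$ when $s(t) \neq w(1)$ (interpreting the condition as automatically satisfied when $w = \emptyword$), and $|\greedy_t(s)| = |w| - 1$ when $s(t) = w(1)$. Since $w$ is a deterministic function of $s[1,t-1]$, it is independent of $s(t)$. Hence, conditional on $w \neq \emptyword$, the increment $|\greedy_t(s)| - |\greedy_{t-1}(s)|$ is $+1$ or $-1$ each with probability $1/2$; conditional on $w = \emptyword$, the increment is $+1$ deterministically. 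These are exactly the transition rules of a simple random walk on $\N$ with a reflective barrier at $0$, starting at $|\greedy_0(s)| = 0$, which gives the first assertion.

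For the distribution at time $t$, I would couple with an ordinary simple random walk. Let $(X_t)_{t\ge 0}$ be a standard simple random walk on $\Z$ with $X_0 = 0$. The process $(|X_t|)_{t\ge 0}$ is a Markov chain on $\N$: from state $k\ge 1$, symmetry gives $\PP[X_t = k \mid |X_t| = k] = \PP[X_t = -k \mid |X_t| = k] = 1/2$, and conditioning further on the sign and the next $\pm 1$ step yields $|X_{t+1}| = k \pm 1$ each with probability $1/2$; from state $0$ it deterministically moves to $1$. This is the same transition kernel as $(|\greedy_t(s)|)$, so the two processes are equal in distribution. Therefore $\PP[\greedy_{2t}(s) = \emptyword] = \PP[X_{2t} = 0] = \binom{2t}{t} 2^{-2t}$, and for $k \ge 1$ with $k \equiv t \pmod 2$,
\[
\PP[|\greedy_t(s)| = k] = \PP[X_t = k] + \PP[X_t = -k] = 2\binom{t}{(t+k)/2} 2^{-t}.
\]

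There is no real obstacle: the only content is the independence of $s(t)$ from $\greedy_{t-1}(s)$, which immediately produces the $\pm 1$-with-probability-$1/2$ increments, and the rest is the standard identification of a reflected random walk with the absolute value of a free random walk.
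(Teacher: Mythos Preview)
Your proof is correct and follows essentially the same approach the paper sketches: the paper only offers the one-line remark that $|\greedy_{t+1}(s)|=|\greedy_t(s)|\pm 1$ with each step equally likely, and you have simply filled in the details, including the standard identification of the reflected walk with $|X_t|$ to read off the exact distribution, which the paper omits.
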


Roughly, this follows from the fact at each time $t$ we have $|\greedy_{t+1}(s)|=|\greedy_{t}(s)|\pm 1$, with each step equally likely. We now perform a more careful analysis of the probability that $\greedy_t(s)$ takes any particular value.

\begin{definition}For a word $w\in \{0,1\}^*$, let $q_t(w)=\PP[\greedy_t(s)=w]$ where $s\in\{0,1\}^{\geq t}$ is uniformly random. For $k\in\N$, let $q_t(k)=\PP[|\greedy_t(s)|=w]=\sum_{|w|=k}q_t(w)$. 
\end{definition}
\noindent Note that that Proposition~\ref{prop:greedysrw} gives the exact value of $q_t(k)$, and from this we can recover the result of the first author, Huang, Nam, and Thaper in \cite{HHNT}:
\[|\cS_2(n)|=2^{2n}\PP[s\in \cS_2(n)]=2^{2n}\PP[\emptyword\in B_{2n}]\geq 2^{2n}q_t(\emptyword)={2n\choose n}.\]
Recall that we always have $\greedy_t(s)\in \Sigma_2$, so if $v\notin \Sigma_2$ then $q_t(v)=0$. For a fixed $k\geq 1$, we have $|\Sigma_2\cap \{0,1\}^k|=|\{0^a1^{k-a},1^a0^{k-a}\mid 1\le a\le k\}|=2k$. It is natural to predict that among the $2k$ words of length $k$, the greedy algorithm visits each word with roughly the same frequency. We prove that this heuristic holds when \textit{both $k$ and $t$} are sufficiently large. In Section~\ref{section:conclusion}, we discuss the unusual behavior of $q_t(w)$ for small words; for example $q_t(01)$ and $q_t(00)$ have different asymptotic constants as $t\to\infty$.

\begin{lemma}\label{lem:greedyuniform} For all $w\in \Sigma_2$ with $|w|=k\geq 3$, if $t+k$ is even then
\begin{equation}\label{eq:qt}
\frac{1}{2(k-2)}q_t(k-2)
\geq q_t(w)\geq \frac{1}{2(k+2)}q_t(k+2), 
\end{equation}
so in particular, if $t\ge k^2$, then $q_t(w)=\Theta(k^{-1}t^{-1/2})$ and as $t^2/k,k\to\infty$, $q_t(w)\sim \frac 1{2k}q_t(k)$.
\end{lemma}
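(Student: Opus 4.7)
The plan is to establish the two-sided sandwich in~(\ref{eq:qt}) by a direct pointwise comparison between $q_t$ values at different lengths, and then extract the asymptotic consequences from Proposition~\ref{prop:greedysrw}. By the $0 \leftrightarrow 1$ symmetry of the greedy algorithm, I may assume $w = 0^\alpha 1^\beta$ with $\alpha \geq 1$, $\beta \geq 0$, $\alpha + \beta = k$, and set $p_t(\alpha, \beta) := q_t(w)$. Specializing Proposition~\ref{prop:buffersetdefn} to the greedy trajectory (which is a deterministic function of $s$), I first derive the backward recurrence $p_t(\alpha, \beta) = \tfrac{1}{2}\bigl[p_{t-1}(\alpha, \beta - 1) + p_{t-1}(\alpha + 1, \beta)\bigr]$ for $\beta \geq 1$, together with analogous boundary versions at $\beta = 0$ that involve the ``cross'' pop transition sending shape $(1, \alpha)$ to shape $(\alpha, 0)$.

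The heart of the argument is the following pointwise claim, to be proved by a joint induction on $t$: (i) across lengths, $p_t(\alpha, \beta) \leq p_t(\alpha', \beta')$ whenever $\alpha + \beta > \alpha' + \beta'$ with compatible parity, and (ii) within a single length, $p_t(\alpha, \beta)$ is weakly decreasing in $\alpha$ for fixed $\alpha + \beta$. In the generic case $\beta, \beta' \geq 1$, substituting the recurrence on both sides of~(i) and applying~(i) at time $t - 1$ to each of the two resulting pairs of terms closes the inductive step. In the boundary case $\beta' = 0$, the cross pop forces a same-length comparison of the form $p_{t-1}(1, k - 2) \geq p_{t-1}(\alpha, \beta - 1)$, which is an instance of~(ii); symmetrically, the $\beta = 1$ boundary of~(ii) pulls in~(i). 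I expect the main technical obstacle to be the $\beta = 1$ case of~(ii), which requires the cross-length gap $p_{t-1}(\alpha, 0) - p_{t-1}(\alpha + 2, 0)$ to dominate the within-length gap $p_{t-1}(1, \alpha + 1) - p_{t-1}(\alpha + 1, 1)$; I would handle this by maintaining a refined quantitative version of~(ii) as an additional inductive invariant (e.g.\ tracking the size of the within-length gaps in terms of the cross-length ones).

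Once~(i) is in hand, the upper bound in~(\ref{eq:qt}) follows by summation: for each $w \in \Sigma_2 \cap \{0,1\}^k$ starting with $0$, comparing $q_t(w)$ against the $k - 2$ words $w' \in \Sigma_2 \cap \{0,1\}^{k-2}$ also starting with $0$ gives $(k - 2)\,q_t(w) \leq \sum_{w'} q_t(w') = q_t(k - 2)/2$, which is exactly the stated upper bound; the lower bound follows symmetrically by comparing with length $k + 2$. Finally, Proposition~\ref{prop:greedysrw} gives $q_t(k) = 2\binom{t}{(t+k)/2}2^{-t}$, from which $q_t(k \pm 2)/q_t(k) = 1 + O(k^2/t)$ and $q_t(k) = \Theta(t^{-1/2})$ whenever $t \geq k^2$. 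Hence both sides of~(\ref{eq:qt}) equal $(1 + o(1))\,q_t(k)/(2k)$ as $k, t^2/k \to \infty$, giving $q_t(w) \sim q_t(k)/(2k)$, and in the range $t \geq k^2$ both bounds are of order $k^{-1} t^{-1/2}$, establishing $q_t(w) = \Theta(k^{-1}t^{-1/2})$.
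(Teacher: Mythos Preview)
Your approach is essentially the paper's: both prove a within-length monotonicity (your (ii)) together with a cross-length comparison (your (i)) by a joint induction on $t$ via the one-step recurrence, and then obtain~\eqref{eq:qt} by averaging over $\Sigma_2\cap\{0,1\}^{k\pm 2}$.

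However, the ``technical obstacle'' you anticipate at the $\beta=1$ boundary of (ii) is illusory, and the proposed refined quantitative invariant is unnecessary. The inequality you need there is
\[
p_{t-1}(\alpha,0)+p_{t-1}(\alpha+1,1)\ \ge\ p_{t-1}(1,\alpha+1)+p_{t-1}(\alpha+2,0),
\]
and instead of rearranging it as a comparison of gaps, you should pair the terms as
\[
p_{t-1}(\alpha,0)\ \ge\ p_{t-1}(1,\alpha+1)
\qquad\text{and}\qquad
p_{t-1}(\alpha+1,1)\ \ge\ p_{t-1}(\alpha+2,0).
\]
The first is an instance of (i) (lengths $\alpha$ versus $\alpha+2$, comparing the minimum at length $\alpha$ against the maximum at length $\alpha+2$), and the second is an instance of (ii) within length $\alpha+2$. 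Both are available from the inductive hypothesis at time $t-1$, so the induction closes term-wise with no extra bookkeeping. This is exactly how the paper handles this case (written in its $1^a0^b$ notation as $q_t(1^{a+1})\ge q_t(01^{a+2})$ and $q_t(1^{a+2}0)\ge q_t(1^{a+3})$). With this correction, the rest of your outline---the generic inductive step for (i) and (ii), the averaging argument for~\eqref{eq:qt}, and the extraction of the asymptotics from Proposition~\ref{prop:greedysrw}---is correct and matches the paper.
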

\begin{proof}
We show a monotonicity property: when $t$ is even,
\[
q_t(\emptyword)\geq q_t(10)\geq q_t(11)\geq q_t(1000)\geq q_t(1100) \geq q_t(1110)\geq\ldots,
\]
and when $t$ is odd we have $q_t(1)\geq q_t(100)\geq q_t(110) \geq \ldots$.
That is, for all $t,a,b\geq 0$, we have $q_t(1^a)\geq q_t(10^{a+1})$ and $q_t(1^{a+1}0^{b+1})\geq q_t(1^{a+2}0^b)$. We proceed by induction on $t$. When $t=0$, this holds trivially because $q_t(\emptyword)=1$ and for all $w\neq \emptyword$, $q_t(w)=0$. We require some recurrence relations for $q_t$; towards this let $s\in \{0,1\}^\omega$ be uniformly random so that $q_t(w)=\PP[\greedy_t(s)=w]$. Suppose the claim holds for $t\geq 0$ and consider the word $1^a$ for $a\geq 0$. If $a\neq 1$ then conditioning on the value of $s(t+1)$, we obtain the following recurrence relations. By symmetry between $01^a$ and $10^a$ and by induction,
\begin{align*}
q_{t+1}(1^a)&=\frac 12q_t(01^a)+\frac 12q_t(1^{a+1})\\
&\geq \frac 12q_t(10^a)+\frac 12q_t(1^20^{a+1})=q_{t+1}(10^{a+1}),
\end{align*}
In the case $a=1$, we have $q_{t+1}(1)=\frac 12q_t(01)+\frac 12(q_t(11)+q_t(\emptyword))\geq q_{t+1}(10^{2})$, applying the calculation above with the extra term $q_t(\emptyword)$. Now, let $a,b\geq 0$, and consider the word $1^{a+1}0^{b+1}$. If $b\neq 0$, then conditioning on $s(t+1)$,
\begin{align*}
q_{t+1}(1^{a+1}0^{b+1})&=\frac 12 q_t(1^{a+1}0^{b})+\frac 12 q_t(1^{a+2}0^{b+1})\\
&\geq 
\frac 12q_t(1^{a+2}0^{b-1})+\frac 12 q_t(1^{a+3}0^{b})=
q_{t+1}(1^{a+2}0^{b}),
\end{align*}
by induction, term-wise. If $b=0$, then
\begin{align*}
q_{t+1}(1^{a+1}0)&=\frac 12 q_t(1^{a+1})+\frac 12 q_t(1^{a+2}0)\\
&\geq \frac 12q_t(01^{a+2})+\frac 12 q_t(1^{a+3})=
q_{t+1}(1^{a+2}),
\end{align*}
also by induction, term-wise. This completes the proof by induction, so the monotonicity holds. Also by symmetry $q_t(1^a0^b)=q_t(0^a1^b)$, and thus for all $w,v\in \Sigma_2$ with $k=|w|<|v|$, we have $q_t(w) \geq q_t(v)$. By averaging over $v$ of length $k+2$,
\[
q_t(w)\geq \frac 1{2(k+2)}\sum_{v \in\Sigma_2\cap\{0,1\}^{k+2}}\!\!q_t(v)=\frac 1{2(k+2)}q_t(k+2).
\]
By averaging over $k-2$, we obtain the analogous upper bound when $k\geq 3$. This proves \eqref{eq:qt}. Since 
\[
q_t(k)=2{t\choose \frac{t+k}{2}}2^{-t} = \Theta(t^{-1/2}),
\]
if $t\ge k^2$, the second conclusion of the lemma follows.
\end{proof}

\section{Buffer Set Analysis}\label{section:buffersetanalysis}

By itself, the greedy algorithm succeeds to obtain $\emptyword \in B_n(s)$ with probability $\Theta(n^{-1/2})$. In Section~\ref{section:boostedgreedy}, we give a boosted version of the greedy algorithm which succeeds with constant probability. Both of these algorithms yield individual buffer threads which attempt to find a partition of the input word $s$ into identical subsequences. We think of the buffer set $(B_t(s))_t$ as a non-deterministic algorithm which simultaneously produces all possible partitions of $s$. In this section, we study the dynamics of $B_t$ assuming the existence of a suitably strong boosted greedy algorithm which typically produces buffers of bounded size, giving the following.

\begin{lemma}\label{lem:boundedbuffer} For a uniformly random word $s\in \{0,1\}^{\omega}$, let $Y_t=\min\{|w|\mid w\in \Sigma_2\cap B_t(s)\}$. Then for all $n\in \N$, and $\delta<1/3$, $\sum_{t\leq n}\Ex[Y_{t}^\delta]=O(n)$.
\end{lemma}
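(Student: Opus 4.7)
The plan is to establish a uniform-in-$t$ moment bound $\Ex[Y_t^\delta]=O_\delta(1)$ for $\delta<1/3$, from which $\sum_{t\leq n}\Ex[Y_t^\delta]=O(n)$ follows immediately. Starting from the trivial inequality $Y_t\le |\greedy_t(s)|$ and combining with \cref{prop:greedysrw} only gives $\Ex[Y_t^\delta]=O(t^{\delta/2})$, summing to $O(n^{1+\delta/2})$, which is too weak. So the argument must exploit that $B_t\cap \Sigma_2$ is typically much richer than the single element $\{\greedy_t(s)\}$.

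The first step is to lower bound $\PP[w\in B_t(s)]$ for each fixed short $w\in \Sigma_2$. From \cref{lem:greedyuniform}, the greedy algorithm lands on each specific $w$ of length $k$ with probability $\Theta(k^{-1}t^{-1/2})$, yielding the baseline estimate $\PP[w\in B_t]\gtrsim k^{-1}t^{-1/2}$. The boosted greedy algorithm developed in \cref{section:boostedgreedy}, used here as a black box, should strengthen this to a uniform lower bound $\PP[w\in B_t]\ge c>0$ for each fixed $w$ of bounded length and sufficiently large $t$: informally, because boosted greedy succeeds with constant probability on a random input, we can ``aim'' the residual buffer at $w$ rather than at $\emptyword$ without losing more than a constant factor.

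Summing these lower bounds over the $\Theta(k^2)$ words in $\Sigma_2$ of length at most $k$ (using $|\Sigma_2\cap \{0,1\}^j|=2j$) gives, by linearity,
\[
    \Ex\bigl[\#\{w\in \Sigma_2\cap B_t\,:\,|w|\le k\}\bigr]\gtrsim c\,k^2.
\]
I would then upgrade this first-moment estimate to a tail bound $\PP[Y_t>k]\lesssim k^{-\alpha}$ for some $\alpha>1/3$ via a Paley--Zygmund (second moment) argument, which requires controlling the pairwise covariance $\PP[w,w'\in B_t]-\PP[w\in B_t]\PP[w'\in B_t]$ for distinct short $w,w'\in \Sigma_2$. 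Given a sufficient tail, the identity $\Ex[Y_t^\delta]\le \delta\sum_{k\ge 1}k^{\delta-1}\PP[Y_t\ge k]$ gives $\Ex[Y_t^\delta]=O_\delta(1)$ uniformly for $\delta<\alpha$, and then $\sum_{t\le n}\Ex[Y_t^\delta]=O(n)$ is immediate.

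The main obstacle is controlling the joint probability $\PP[w,w'\in B_t]$ for two distinct short buffers. These events are not independent, since they are both functions of the same random $s$: simultaneously having $w,w'\in B_t\cap \Sigma_2$ means that $s[1,t]$ admits two essentially different almost-perfect matchings with specified residuals. Bounding this presumably requires a careful joint analysis of two interacting buffer threads, together with the recursive structure from \cref{prop:buffersetdefn}. The precise threshold $\delta<1/3$ should fall out of the exponent in this second-moment computation, and sharpening it is where the rest of \cref{section:buffersetanalysis} will do most of its technical work.
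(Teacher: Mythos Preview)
Your proposal has a genuine gap, and it also diverges substantially from the paper's actual argument.

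\textbf{The gap.} Your plan hinges on a second-moment/Paley--Zygmund step to turn the first-moment estimate $\Ex[\#\{w\in\Sigma_2\cap B_t:|w|\le k\}]\gtrsim k^2$ into a decaying tail $\PP[Y_t>k]\lesssim k^{-\alpha}$. But Paley--Zygmund only yields a \emph{constant} lower bound on $\PP[Z>0]$; to get $\PP[Z=0]\to 0$ you need Chebyshev with $\var(Z)=o((\Ex Z)^2)$, which in turn needs the covariances $\cov(\bbone_{w\in B_t},\bbone_{w'\in B_t})$ to be small. There is no reason to expect this: all these events are determined by the same random input $s$, and the buffer set $B_t$ is highly structured (for instance, containment of one short word strongly favors containment of nearby ones). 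You acknowledge this as ``the main obstacle'' but offer no mechanism, and nothing in the paper supplies one. Furthermore, your first step --- that the boosted greedy algorithm gives $\PP[w\in B_t]\ge c$ uniformly in $t$ for each fixed short $w$ --- is itself not what the boosted greedy machinery actually delivers; it produces a checkpoint sequence, not a pointwise-in-$t$ bound. In fact, you are aiming at the \emph{uniform} estimate $\Ex[Y_t^\delta]=O(1)$, which the paper explicitly does \emph{not} prove (the remark after \cref{lem:boosted_algorithm} says a uniform bound would need ``more careful analysis'').

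\textbf{What the paper does instead.} The paper avoids any second-moment analysis of $B_t$ entirely. It follows a \emph{single} buffer thread through the boosted greedy algorithm (\cref{lem:algorithm_step}), obtaining a Markov-like checkpoint process $(X_m,T_m)$ with $1^{X_m}$ or $0^{X_m}\in B_{T_m}$, drift $\Ex[X_{m+1}-X_m\mid X_m=k]\approx -2$, and second moment $\Ex[(X_{m+1}-X_m)^2\mid X_m=k]\le 3k+O(1)$. A Lyapunov argument with test function $X_m^{2+\delta}$ (\cref{lem:boosted_algorithm}) then gives the Ces\`aro bound $\sum_{m\le n}\Ex[X_m^{1+\delta}]=O(n)$; here the threshold $\delta<1/3$ is exactly the condition $3(2+\delta)-7<0$ coming from those two numbers $-2$ and $3$. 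Finally, the Lipschitz bound $Y_t\le X_{M(t)}+T^*_{M(t)}$ plus a double-counting argument over checkpoints converts this into $\sum_{t\le n}\Ex[Y_t^\delta]=O(n)$. No correlation estimates, and only a Ces\`aro (not uniform) bound --- which is all that is needed downstream.
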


In this section, it will be useful to consider a generalization of the greedy algorithm \textit{initialized} at some word $v\in\{0,1\}^*$. We let $\greedy(\emptyword;v)\coloneqq v$ and let $\greedy(s;v)$ satisfy the greedy recurrence in \eqref{eq:greedydefn}. Thus, for example, if $s\in \{0,1\}^\omega$ is uniformly random then $(|\greedy_t(s;v)|)_{t\geq 0}$ is a simple random walk initialized at $|v|$. For $w\in \{0,1\}^*$, we also let $q_t(w;v)=\PP[\greedy_t(s;v)=w]$. Indeed, since this is only a difference in initialization we have the conditional probability,
\[
q_t(w;v)=\PP[\greedy_{t+i}(s)=w\mid \greedy_i(s)=v],
\]
as long as $q_i(v)\neq 0$. As an aside, we remark that because $\greedy_t$ is a recurrent random walk, we have that if $|w|,|v|\leq k$ then $q_t(w;v)=(1+O(k/\sqrt t))q_t(w)$. In this section, let $\Sigma_2^{(01)}=\{0^a1^b\mid a\geq 0,b\geq 1\}$ and $\Sigma_2^{(10)}=\{1^a0^b\mid a\geq 0,b\geq 1\}$, so $\Sigma_2=\Sigma_2^{(01)}\cup \Sigma_2^{(10)}\cup \{\emptyword\}$. Define the probability
\[
\delta_t(v,w)=\PP[v\in B_t(s)\text{ and }w\notin B_t(s)].
\]
Our arguments will only involve the quantities $\delta_t(1w1,w)$ and $\delta_t(0w0,w)$ for $w\in \Sigma_2$. We will ultimately show that the $\delta_t$'s are small. This means that whenever, say, $1^{2k}\in B_t$, then it is also likely that $1^{2k-2},1^{2k-4},\ldots,\emptyword\in B_t$. This will imply that if $Y_t$ is bounded, then $Y_t\leq 2$ with high probability.

The quantity $\delta_t(1w1,w)$ is natural to study because at time $t$, where $s(t) = 1$ say, the buffer thread may have the option of making the greedy choice $B \mapsto \tail(B) = w$, but instead choose to make the non-greedy choice $B\mapsto B \circ 1 = 1w1$.

\begin{lemma}\label{lem:pt_decomposition} Let $s\in\{0,1\}^\omega$ be uniformly random. Let $\cev q_t(w;v)=q_t(\rev(w);\rev(v))$.
Then for all $v\in\{0,1\}^*$,
\begin{equation}\label{eq:pt_as_qt}
\PP[v\in B_n(s)]=\cev q_n(\emptyword;v)+\frac 12\sum_{t=0}^{n-1}\sum_{w\in \{0,1\}^*}\Big(\delta_t(1w1,w)\cev q_{n-t-1}(w1;v)+\delta_t(0w0,w)\cev q_{n-t-1}(w0;v)\Big).
\end{equation}
In particular, we have
\begin{equation}\label{eq:pt_as_qt_beta}
\PP[\emptyword\in B_n(s)]=q_n(\emptyword)+\sum_{t=0}^{n-1}\sum_{w1\in \Sigma_2^{(01)}}\delta_t(1w1,w) q_{n-t-1}(\rev(w1)).
\end{equation}
\end{lemma}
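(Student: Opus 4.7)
The plan is to prove \eqref{eq:pt_as_qt} by a backward decomposition along the \emph{reverse greedy thread} and to deduce \eqref{eq:pt_as_qt_beta} by specialization. For fixed $s$ and $v$ define $(w^*_t)_{t=0}^n$ by $w^*_n=v$ and, for $1\le t\le n$, $w^*_{t-1}$ is obtained from $w^*_t$ by deleting its last letter when that letter equals $s(t)$, and by prepending $s(t)$ to $w^*_t$ otherwise. This is exactly forward greedy applied to $\rev(s)$ started at $\rev(v)$, so $\PP[w^*_0=\emptyword]=\cev q_n(\emptyword;v)$; moreover, each forward transition $w^*_t\to w^*_{t+1}$ is a valid buffer-thread move, so $\{w^*_0=\emptyword\}\subseteq\{v\in B_n(s)\}$.

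The crux of the argument is a monotonicity claim for $f(t):=\mathbbm{1}[w^*_t\in B_t(s)]$. Suppose $w^*_t\in B_t$ and let $\emptyword\to\cdots\to w^*_t$ be any witnessing buffer thread. The forward transition $w^*_t\to w^*_{t+1}$ induced by the reverse greedy backward step at $t+1$ is always a valid buffer-thread move: either $w^*_{t+1}=w^*_t\circ s(t+1)$ (an append, corresponding to reverse greedy having shortened) or $w^*_t(1)=s(t+1)$ and $w^*_{t+1}=\tail(w^*_t)$ (a shorten, corresponding to reverse greedy having prepended). Extending the witnessing thread by this move reaches $w^*_{t+1}$, so $w^*_{t+1}\in B_{t+1}$. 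Hence $f$ is a monotone $\{0,1\}$-sequence, so the events $\{f(t)=0,\ f(t+1)=1\}$ for $0\le t\le n-1$ are pairwise disjoint with union $\{f(0)=0,\ f(n)=1\}$. Since $\PP[f(0)=1]=\cev q_n(\emptyword;v)$ and $\PP[f(n)=1]=\PP[v\in B_n(s)]$, this gives
\[
\PP[v\in B_n(s)]=\cev q_n(\emptyword;v)+\sum_{t=0}^{n-1}\PP\big[w^*_t\notin B_t(s),\ w^*_{t+1}\in B_{t+1}(s)\big].
\]

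To finish, analyze each summand. In the event $\{w^*_t\notin B_t,\ w^*_{t+1}\in B_{t+1}\}$, reverse greedy must shorten at step $t+1$: were it to prepend, then the shortened forward-preimage of $w^*_{t+1}$ would be invalid (the last letter of $w^*_{t+1}$ would not match $s(t+1)$), leaving $s(t+1)\circ w^*_{t+1}=w^*_t\notin B_t$ as the only candidate preimage, contradicting $w^*_{t+1}\in B_{t+1}$. So write $w^*_{t+1}=wa$ with $a=s(t+1)$ and $w=w^*_t$; then $w\notin B_t$, and $wa\in B_{t+1}$ forces the other preimage $a\circ wa=awa$ to lie in $B_t$. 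The event now factors as an intersection of $\{awa\in B_t,\ w\notin B_t\}$ (depending only on $s[1,t]$), $\{s(t+1)=a\}$, and $\{\text{reverse greedy from }v\text{ on }s[t+2,n]\text{ reaches }wa\}$ (depending only on $s[t+2,n]$); by independence of disjoint blocks of $s$ its probability equals $\delta_t(awa,w)\cdot\tfrac12\cdot\cev q_{n-t-1}(wa;v)$, and summing over $a\in\{0,1\}$ and $w\in\{0,1\}^*$ yields \eqref{eq:pt_as_qt}. For \eqref{eq:pt_as_qt_beta}, specialize $v=\emptyword$: then $\cev q_{n-t-1}(wa;\emptyword)=q_{n-t-1}(\rev(wa))$ vanishes unless $wa\in\Sigma_2$ (since reverse greedy started from $\emptyword$ is the mirror of forward greedy and so stays in $\Sigma_2$), and the $0\leftrightarrow 1$ symmetry identifies the $a=0$ and $a=1$ contributions, collapsing the factor $\tfrac12$ and restricting to $w1\in\Sigma_2^{(01)}$. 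The main obstacle is isolating the monotonicity of $f$; once that structural fact is in hand, the remainder is careful bookkeeping with independence on disjoint blocks of $s$.
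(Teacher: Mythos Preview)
Your proof is correct, and it follows a genuinely different route from the paper's. The paper proves \eqref{eq:pt_as_qt} by induction on $n$: it derives a linear recurrence for $\PP[v1\in B_{n+1}]$ by conditioning on $s(n+1)$, derives an analogous recurrence for $\cev q_n(w;v)$ by conditioning on the first bit, and then checks that the right-hand side $Q_n(v)$ satisfies the same recurrence plus the new $t=n$ boundary term. Your argument instead constructs the reverse greedy thread $(w^*_t)$ directly, proves the monotonicity of $f(t)=\mathbbm{1}[w^*_t\in B_t]$, and reads off \eqref{eq:pt_as_qt} from a telescoping decomposition together with independence across disjoint blocks of $s$.

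What each approach buys: the paper's inductive verification is mechanical and avoids any probabilistic coupling, at the cost of obscuring \emph{why} the formula holds; indeed, the paper only remarks informally afterward that \eqref{eq:pt_as_qt_beta} encodes the first time a successful buffer thread diverges from greedy. Your argument makes that interpretation rigorous: the jump time of $f$ is exactly that first divergence, and your analysis of the jump event (forcing the reverse-greedy step to be a shorten, hence $awa\in B_t$ with $w\notin B_t$) explains where the $\delta_t$ terms come from. The factorization step is clean and the derivation of \eqref{eq:pt_as_qt_beta} by specialization and $0\leftrightarrow 1$ symmetry matches the paper's. One cosmetic point: since $s$ is infinite, ``$\rev(s)$'' should be read as $\rev(s[1,n])$, but this is clear from context.
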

Roughly speaking, \eqref{eq:pt_as_qt_beta} captures the fact that any successful buffer thread $(w_j)_j$ has a critical first time $n-t-1$ at word $w_{n-t-1}=1w$ where it diverges from the greedy algorithm.
\begin{proof}
We prove \eqref{eq:pt_as_qt} by induction on $n$. By construction, $\PP[v\in B_0]=\bbone_{v=\emptyword}=q_0(\emptyword;\rev(v))$. Observe that for any word $v1$ ending in $1$, we have
\begin{align*}
\PP[v1\in B_{n+1}]&=\PP[s(n+1)=1]\PP[(v\in B_n)\cup (1v1\in B_n)]+\PP[s(n+1)=0]\PP[0v1\in B_n]\\
&=\frac 12\Big(\PP[v\in B_n]+\delta_n(1v1,v)+\PP[0v1\in B_n]\Big).
\end{align*}
Observe that $q_n$ satisfies a similar recurrence: for each initializiation of the form $1v'$, we can condition on the first bit of $s$ to get
\begin{align*}
q_n(w;1v')&=\PP[s(1)=1]\PP[\greedy(s[2,n],v')=w]+\PP[s(1)=0]\PP[\greedy(s[2,n],1v'0)=w]\\
&=\frac 12(q_{n-1}(w;v')+q_{n-1}(w;1v'0)).
\end{align*}
Reversing the words, we obtain
\[
\cev q_n(w;v1)=\frac 12(\cev q_{n-1}(w;v)+\cev q_{n-1}(w;0v1)).
\]
Denote the RHS of~\eqref{eq:pt_as_qt} by $Q_n(v)$. Since the above is a linear recurrence relation, $Q_n$ obeys the same equation, plus the added terms at time $n+1$:
\[
Q_{n+1}(v1)=\frac 12(Q_n(v)+Q_n(0v1))+\frac 12\sum_{w\in \{0,1\}^*}\Big(\delta_n(1w1,w)\cev q_{0}(w1;v1)+\delta_n(0w0,w)\cev q_{0}(w0;v1)\Big)
\]
Recall that $\cev q_0(w;v)=\bbone_{w=v}$, so the summation evaluates to $\frac 12\delta_n(1v1,v)$. Thus by induction, $\PP[v1\in B_{n+1}]=Q_{n+1}(v1)$. By symmetry, $\PP[v0\in B_{n+1}]=Q_{n+1}(v0)$, showing \eqref{eq:pt_as_qt} holds for all nonempty words $v$ at time $n+1$.

Finally, we check that $\PP[\emptyword\in B_{n+1}]=Q_{n+1}(\emptyword)$. Indeed we have $\PP[\emptyword\in B_{n+1}]=\frac 12(\PP[1\in B_n]+\PP[0\in B_n]\big)$ and for all words $w$, $q_{n+1}(w;\emptyword)=\frac 12(q_n(w;0)+q_n(w;1))$. Then by linearity, we have $Q_{n+1}(\emptyword)=\frac 12(Q_n(1)+Q_n(0))=\PP[\emptyword\in B_{t+1}]$, since all of the terms involving $q_0(w1;\emptyword)$ are zero. This completes the proof of~\eqref{eq:pt_as_qt}.

For~\eqref{eq:pt_as_qt_beta}, note that if $w\notin \Sigma_2$ then $q_t(\rev(w))=q_t(w)=0$ for all $t$. Thus all $\cev q_t(w1;\emptyword)$ terms vanish except those where $w1$ (resp.\ $w0$) has at most two runs.
With this, we obtain
\[
\PP[\emptyword \in B_n]=\cev q_n(\emptyword;\emptyword)+\frac 12\sum_{t=0}^{n-1}\bigg(\sum_{w1\in \Sigma_2^{(01)}}\delta_t(1w1,w)\cev q_{n-t-1}(w1;\emptyword)+\sum_{w0\in \Sigma_2^{(10)}}\delta_t(0w0,w)\cev q_{n-t-1}(w0;\emptyword)\Bigg).
\]
By symmetry between $0$ and $1$, the two inner sums are equal, so the equation simplifies to~\eqref{eq:pt_as_qt_beta}.
\end{proof}
\noindent Motivated by Lemma~\ref{lem:pt_decomposition}, define
\[
\delta^*_t(k)\coloneqq \sum_{\substack{w1\in \Sigma_2^{(01)}\\|w1|\leq k
}}\delta_t(1w1,w).
\]
Let us motivate the next step. Using Lemma~\ref{lem:greedyuniform}, we roughly approximate $q_{n-t-1}(w)\asymp 1/|w|\sqrt n$ for $w\in \Sigma_2$. Then Lemma~\ref{lem:pt_decomposition} roughly says that $\PP[\emptyword\in B_n]\asymp \sum_{t\leq n} \delta^*_t(k)/k\sqrt n$ for some appropriate choice of $k$. As the probability is at most $1$, we conclude that $\sum_t\delta_t^*(k)$ is not too large. The next lemma leverages this bound by comparing $\delta_t^*$ to the property of ``being more than two steps away from a shuffle square."

\begin{lemma}\label{lem:delta_event} Let $E_t(k)$ be the event that there exists $w\in \Sigma_2\cap B_t$ with $|w|\leq k$, and also $\emptyword,0,1,01,10\notin B_t$. Then
\[
\PP[E_t(k)]\leq 16\sum_{t'=t}^{t+2k-1}\delta_{t'}^*(2k)
\]
\end{lemma}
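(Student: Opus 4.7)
The strategy is to produce, for each realization $s \in E_t(k)$, a canonical witness pair $(t', w^*)$ with $t' \in [t, t+2k-1]$, $|w^*| + 2 \le 2k$, $w^*1 \in \Sigma_2^{(01)}$ or $w^*0 \in \Sigma_2^{(10)}$, such that $1w^*1 \in B_{t'}$ and $w^* \notin B_{t'}$ (or the symmetric $0w^*0$ version). A union bound over witness pairs then gives $\PP[E_t(k)] \le C \sum_{t'=t}^{t+2k-1} \delta^*_{t'}(2k)$, and showing that the multiplicity $C \le 16$ (via $0 \leftrightarrow 1$ symmetry, the two orientations of the wrapped witness, and the subcase structure) proves the lemma.

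Fix $s \in E_t(k)$ and let $w_0$ be the lex-smallest buffer of minimum length in $\Sigma_2 \cap B_t$. The forbidden-set hypothesis gives $|w_0| \ge 2$ and $w_0 \notin \{\emptyword, 0, 1, 01, 10\}$. By $0 \leftrightarrow 1$ symmetry (a factor of $2$ in $C$), assume $w_0$ starts with $1$; then either $w_0 = 1^a$ for some $a \ge 2$ (pure-run case) or $w_0 = 1^a 0^b$ with $a, b \ge 1$ and $a + b \ge 3$ (mixed case, since $w_0 = 10$ is forbidden). In the pure-run case, write $w_0 = 1 \cdot 1^{a-2} \cdot 1$; by minimality of $w_0$ in $\Sigma_2 \cap B_t$, the strictly shorter $\Sigma_2$ word $1^{a-2}$ is not in $B_t$, so $\delta_t(1^a, 1^{a-2}) > 0$, giving a valid witness $(t, 1^{a-2})$ with $w^*1 = 1^{a-1} \in \Sigma_2^{(01)}$ and $|w^*1| = a-1 \le k \le 2k$.

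In the mixed case, I iterate the greedy algorithm from $v_0 = w_0$, producing a thread $v_j \in B_{t+j} \cap \Sigma_2$ of shape $1^{a_j} 0^{b_j}$ or $0^{a_j} 1^{b_j}$, with $|v_j|$ a simple random walk reflected at $0$ starting from $|w_0| \le k$. I look for the first time $J \le 2k-1$ at which $v_J$ becomes a pure run of length at least $2$; then the pure-run argument applies at time $t+J$ provided the wrap-inside of $v_J$ remains absent from $B_{t+J}$. If instead, at some earlier time $t + J' \le t + J$, a strictly shorter $\Sigma_2$-buffer first enters $B$, the entering transition (via \cref{prop:buffersetdefn}) cannot come from an even-shorter predecessor (contradicting first-entry), so it must come from a length-two-longer wrapped buffer $c v c$ present at time $t + J' - 1$, producing a $\delta$-witness directly at time $t + J' - 1$ in $[t, t + 2k - 1]$. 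Either way, we obtain a witness pair of the required form in the window.

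The multiplicity $C \le 16$ comes from the composition of: $0 \leftrightarrow 1$ symmetry (factor $2$), the two orientations of the wrapped witness (factor $2$), the pure-run vs mixed case split (factor $2$), and residual overhead from the choice between $J$ and $J'$ together with tie-breaking (factor $2$). The main obstacle is the mixed-case argument: controlling the greedy thread $(v_j)$ deterministically over $2k-1$ steps, and verifying that the shorter-buffer entry analysis always yields a wrapped-buffer $\delta$-witness rather than degenerating into other transition types. The constant $16$ reflects this multi-way case analysis and is not optimized; the exponent in the main theorem can likely be improved by a more careful accounting.
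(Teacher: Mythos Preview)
Your framing is deterministic --- ``for each realization $s\in E_t(k)$, a canonical witness pair'' --- but this cannot succeed, and the paper's argument is essentially different. The events you want to witness, namely $\{jwj\in B_{t'}\text{ and }w\notin B_{t'}\}$ for $t'>t$, depend on the future bits $s(t+1),\ldots,s(t')$, and for many extensions no witness exists in the window $[t,t+2k-1]$. Concretely, in your mixed case $w_0=1^a0^b$: the greedy thread $v_j$ becomes a pure run only after $a$ ones have been read, so if $s(t+1)=\cdots=s(t+2k-1)=0$ the thread stays at $1^a0^{b+j}$ forever and your $J\le 2k-1$ fails; your fallback (a shorter $\Sigma_2$-buffer first entering $B$) need not trigger either. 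The paper instead proves a \emph{conditional probability} bound: given $E_t(k)$, with probability at least $1/8$ over the future bits (specifically, $s(t+1)=0$ and at least $a$ zeros among the next $2a-2$ bits, where $a$ is chosen via lexicographic minimality), the greedy thread reaches a pure run $1^{b+c}$ at some $t'\le t+2k-1$. The inequality then follows from $\PP[E]\le \PP[D\mid E]^{-1}\PP[D]$, giving the factor $8$, and a $0\leftrightarrow1$ symmetry gives the remaining factor $2$. Your decomposition of $16$ as four independent factors of $2$ does not match this structure.

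Even if one grants a probabilistic reading of your sketch, the hardest step is missing. Once $1^{b+c}\in B_{t'}$, you must show $1^{b+c-2}\notin B_{t'}$ (your ``wrap-inside remains absent''), and you assert this without argument. In the paper this is the bulk of the proof: one supposes a buffer thread reaches $\{\emptyword,1\}$ at time $t'$, and a careful case analysis --- using the lexicographic minimality of both $(a,b)$ with $0^a1^b\in B_t$ and $(x,y)$ with $1^x0^y\in B_t$, and counting the zeros and ones available in $s[t+1,t']$ --- shows that any such thread must leave $\Sigma_2$ at some intermediate time $t''$, which itself produces a $\delta$-witness of length at most $2k$. Your ``shorter-buffer entry'' paragraph gestures at this but does not carry it out, and in particular does not explain why the entering buffer cannot arise from appending $s(t')$ to a $\Sigma_2$ word already present.
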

\begin{proof}
Let $D_t(k)$ be the event that for some $j\in \{0,1\}$ and some $wj\in \Sigma_2$ with $|wj|\leq k$, we have $jwj\in B_t$ but $w\notin B_t$. By a union bound, $\PP[D_t(k)]\leq 2\delta_t^*(k)$. We will show that conditioned on $E_t(k)$, with constant probability $\bigcup_{t'=t}^{t+2k-1}D_{t'}(2k)$ holds.

Let $(a,b)$ be the lexicographically minimal pair of integers for which $a+b\le k$ and $0^a1^b \in B_t$, and $(a,b) = (\infty, \infty)$ if no such pair exists. Similarly pick $(x,y)$ lexicographically minimal such that $1^x0^y\in B_t$ and $x+y\leq k$. Note that assuming $E_t(k)$, one of $a,x$ is at most $k-1$. We proceed by conditioning on the values of $a,b,x,y$. If $a=0$, then $1^b\in B_t$ and $1^{b-2}\notin B_t$ so $D_t(k)$ holds. If $a=1$, then note that $b\geq 2$ since $0,01\notin B_t$. Then with probability $1/2$ the next bit of $s$ is $s(t+1)=1$, so $1^b\in B_{t+1}$ but $1^{b-2}\notin B_{t+1}$, and thus $D_{t+1}(k)$ holds. By symmetry between $(a,b)$ and $(x,y)$, if $x=0$ or $x=1$ then $D_t(k)$ or $D_{t+1}(k)$ occurs with probability at least $1/2$. By construction, $y=0$ implies $a=0$ and similarly $b=0$ implies $x=0$.

Suppose that $x\geq a\geq 2$, so then $b,y\geq 1$. With probability at least $1/8$, we have $s(t+1)=0$ and also the next $2a-2$ bits of $s$ contain at least $a$ zeros (and hence at most $a-2\leq x-2$ ones). Conditioned on this event, let $t'$ be the position of the $a\nth$ next zero in $s$; that is, $t'\in [t+a+1,t+2a-2]$ such that $s[t+1,t']$ contains $a$ zeros and $c$ ones, $1\leq c\leq a-2$, and $s(t')=0$. Observe that the greedy algorithm yields $\greedy(s[t+1,t'];0^a1^b)=1^{b+c}$, so $1^{b+c}\in B_{t'}$. Conditioned on $t'$, we now prove that either $\emptyword,1 \notin B_{t'}$, implying $D_{t'}(k)$, or else $D_{t''}(2k)$ holds for some $t''<t'$. Since $t'<t+2k$, this is the desired conclusion.

 Suppose that $\emptyword\in B_{t'}$ or $1\in B_{t'}$, so there is a buffer thread $(w_r)_{r=t}^{t'}$ with $w_t\in B_t$ and $w_{t'}\in \{\emptyword,1\}$. First, suppose that $w_r\in \Sigma_2$ for all $r\in [t,t']$. By the minimality of $(a,b)$ and $(x,y)$, we have that either $w_t=1^i0^j$ with $i\geq x$ or $w_t=0^i1^j$ with $i\geq a$.

First, suppose $w_t=1^i0^j$, $i\geq x$. Since $s[t+1,t']$ contains $c$ ones and $c < a \le x \le i$, it is not possible for the prefix $1^i$ to be matched. Specifically, there are at most $c$ `down-steps' $r$ such that $|w_r|-|w_{r-1}|=-1$, so we have $|w_{t'}|\geq x+a-c>1$, a contradiction. 

Next, suppose that $w_t=0^i1^j$ with $i\geq a$. Since $s[t+1,t'-1]$ contains $a-1<i$ zeros, it is not possible for the prefix $0^i$ to be matched; thus $w_r(1)=0$ for all $r\in [t,t'-1]$. Including $t'$, there are at most $a$ down-steps. If $i>a$, then we also have $w_{t'}(1)=0$, contradicting that $w_{t'}\in \{\emptyword,1\}$. Suppose that $i=a$, so by minimality $j\geq b$. Then because there are at most $a$ down-steps, we have $|w_{t'}|\geq b+c\geq 2$, a contradiction. We conclude that there is no such thread $(w_r)_r$ with $w_r\in \Sigma_2$ for all $r$. 

Thus, for each buffer thread $(w_r)_r$ with $w_{t'}\in \{\emptyword, 1\}$, there is some maximal $t \le t''<t'$ such that $w_{t''}\notin \Sigma_2$. Among all such buffer threads, pick one minimizing $t''$. We know $w_{t''+1}\in \Sigma_2$. If we suppose $w_{t''+1}\in \Sigma_2^{(01)}$, then $w_{t''+1}=0^i1^j$ and $w_{t''}=10^i1^j$ for some $i,j\geq 1$, and $s(t''+1)=1$. Moreover, we must have $v=0^i1^{j-1}\notin B_{t''}$ or else we could have chosen to set $w_{t''}\coloneqq v\in \Sigma_2$, contradicting the minimality of $t''$. Thus we have both $v\notin B_{t''}$ and $1v1\in B_{t''}$. Since $(|w_r|)_r$ is Lipschitz and $|w_{t'}|\leq 1$, surely $|v1|\leq t'-t''+1\leq 2k$, so the event $D_{t''}(2k)$ holds. The case $w_{t''+1}\in \Sigma_2^{(10)}$ follows by symmetry. 

By the law of total probability over $a,b,x,y$, we conclude
\[
\PP\Big[\bigcup_{t'=t}^{t+2k-1}D_t(2k)\mid E_t(k)\Big]\geq \Ex_{a,b,x,y\mid E_t(k)}\Big[\PP\Big[\bigcup_{t'=t}^{t+2k-1}D_t(2k)\mid a,b,x,y\Big]\Big]\geq 1/8.
\]
Using the identity $\PP[E]\leq \PP[D\mid E]^{-1}\PP[D]$, we obtain the main inequality.
\end{proof}

\noindent We are now ready to restate and prove the main theorem.

\begin{numtheorem}{\ref{thm:square}}[Equivalent form]
If $s\in \{0,1\}^{2n}$ is uniformly random, then $\PP[\emptyword \in B_{2n}(s)]= \frac 12-o(n^{-1/15})$.
\end{numtheorem}
\begin{proof}
With foresight, let $k=\floor{n^{3/14}}=o(\sqrt n)$. In order to apply \cref{lem:delta_event}, our first goal is to show that $\delta^*_t(2k)$ is small on average. 

By Lemma~\ref{lem:greedyuniform}, for all $t\leq 2n+2k$ and $w\in \Sigma_2$ with $|w|\leq 2k$, if $t+|w|$ is even, then $q_{4n-t-1}(\rev(w1)) =\Omega(k^{-1}n^{-1/2})$. Note that if $t+|w|$ is odd then $\delta_t(1w1,w)=0$. By Lemma~\ref{lem:pt_decomposition}, artificially extending $s$ to length $4n$ by sampling $2n$ more i.i.d uniform bits, we have
\[
1\geq \PP[\emptyword\in B_{4n}]\geq \sum_{t=0}^{2n+2k}\sum_{\substack{w1\in \Sigma_2^{(01)}\\|w1|\leq 2k}}\delta_t(1w1,w)q_{4n-t-1}(\rev(w1))= \Omega(k^{-1}n^{-1/2})\sum_{t=0}^{2n+2k}\delta_t^*(2k).
\]
Here, we applied \eqref{eq:pt_as_qt_beta} and used the fact that all terms are nonnegative to drop terms in the middle inequality. Thus, 
\[
\sum_{t=0}^{2n+2k}\delta^*_t(2k)=O(k\sqrt n).
\]
Let $A_t$ be the event that $\emptyword,0,1,01,10\notin B_t$. Let $Y_t=\min\{|w|\mid w\in \Sigma_2\cap B_t\}$. As in Lemma~\ref{lem:delta_event}, let $E_t(k)$ be the event that $Y_t\leq k$ and $A_t$ holds. By Lemma~\ref{lem:delta_event},
\[\PP[A_t]=\PP[Y_t>k]+\PP[E_t(k)]\leq \PP[Y_t>k]+16\sum_{r=t}^{t+2k-1}\delta_r^*(2k).
\]

Note that for $0<\gamma<1/3$, Markov's inequality implies $\PP[Y_t>k]\leq \Ex[Y_t^{1/3-\gamma}]/k^{1/3-\gamma}$. We will take $\gamma < 0.001$. By Lemma~\ref{lem:boundedbuffer}, this expectation is bounded in Ces\`aro mean. Thus, summing over $t$ we have
\[
\sum_{t=0}^{2n}\PP[A_t]\leq \sum_{t=0}^{2n}\frac{\Ex[Y_t^{1/3-\gamma}]}{k^{1/3-\gamma}}+\sum_{t=0}^{2n+2k}2k\delta_t^*(2k)\leq O(nk^{-1/3+\gamma})+O(k^2n^{1/2})\leq O(n^{13/14+\gamma})=o(n^{14/15}),
\]
where we used our assumptions that $k\sim n^{3/14}$ and $\gamma<0.001$. By averaging, there is some odd (deterministic) time $t^*\leq 2n$ such that $\PP[A_{t^*}]+\PP[A_{2n-t^*}]\leq o(n^{-1/15})$. 

Now, we use this special time $t^*$ to show that with probability close to $\frac12$, a uniform random word $s$ of length $2n$ is a shuffle square. Break $s$ into two intervals $s[1,t^*]$ and $s[t^*+1, 2n]$, and condition on the event of probability $\frac 1 2$ that $s$ has an even number of $0$'s and $1$'s. Observe that $s[1,t^*]$ and $s[t^*+1, 2n]$ are each independent of this event, and so are still uniform random on the margin. Applying the definition of $A_{t^*}$ to $s[1,t^*]$ and the definition of $A_{2n-t^*}$ to $\rev(s[t^*+1, 2n])$ and because $t^*$ is odd, we obtain that with conditional probability at least $1-o(n^{-1/15})$ there is some $j_1\in \{0,1\}\cap B_{t^*}(s)$ and also some $j_2\in \{0,1\}\cap B_{n-t^*}(\rm{rev}(s))$. Thus we have a partition $(A_1,A_2)$ of $[t^*]$ and a partition $(A_1',A_2')$ of $[t^*+1,2n]$ such that $s(A_1)=s(A_2)\circ j_1$ and $s(A_1')=j_2\circ s(A_2')$. Since $s$ has an even number of each letter, $j_1=j_2$, and therefore the partition $(A_1\cup A_2',A_2\cup A_1')$ yields a partition of $s$ into identical sub-words. This succeeds with probability at least $\frac 12-o(n^{-1/15})$, as desired.
\end{proof}

\section{A Locally Boosted Greedy Algorithm} \label{section:boostedgreedy}
It remains to show Lemma~\ref{lem:boundedbuffer}, that the length of the minimum buffer element has a bounded moment. Recall that the greedy algorithm gives a buffer thread whose length behaves like an unbiased simple random walk (with a reflective barrier at $0$). We give an improved greedy algorithm which occasionally makes non-greedy choices with foresight obtain a walk with sufficient negative drift such that (a certain moment of) the resulting buffer thread is bounded in size. The algorithm proceeds in cycles; the following lemma outlines how to iterate the algorithm for one cycle. One can view the following lemma as a statement about the recurrence of a stochastic dynamical system: we describe a boosted greedy algorithm which tracks its first return to a state when the buffer thread is a word of the form $1^k$ or $0^k$.

\begin{lemma}\label{lem:algorithm_step}Let $s\in \{0,1\}^{\omega}$ be uniformly random, let $k\geq 0$, and suppose $1^k\in B_{t_0}(s)$ at some initial time $t_0 \ge 0$. Then there are random variables $X^*,T^*$ such that one of $0^{X^*},1^{X^*}$ is in $B_{t_0+T^*}(s)$, satisfying:
\begin{align}
\Ex[X^*-k]&=-2+O((7/8)^k) \label{eq:algorithmstep1}\\
\Ex[(X^*-k)^2]&\leq 3k+O(1) \label{eq:algorithmstep2}\\
\PP[T^*\geq (4+\eps)k]&\leq \exp(-\Omega(\eps^2k/(1+\eps))) \label{eq:algorithmstep3}\\
\PP[|X^*-k|\geq \eps k]&\leq \exp(-\Omega(\eps^2k/(1+\eps))), \label{eq:algorithmstep4}
\end{align}
for any $\eps > 0$.
\end{lemma}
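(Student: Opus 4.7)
The plan is to define the boosted greedy algorithm as the greedy algorithm from \cref{section:greedyalgorithm} modified to occasionally insert a non-greedy ``detour'' outside of $\Sigma_2$, and to run it until the resulting buffer thread first reaches a one-run state. The prototypical detour is the following: at a buffer $1^a 0^b$ with $a, b \geq 1$ and $a$ at least some threshold, if the next two bits of $s$ are both $1$, the greedy algorithm would perform two matches and reduce the length by $2$; instead the boosted algorithm appends the first bit (producing $1^a 0^b 1 \notin \Sigma_2$) and then matches the second bit (producing $0^b 1 \in \Sigma_2^{(01)}$), achieving length change $1-a$ and thus a savings of $a - 3$ relative to greedy. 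A symmetric detour applies in $\Sigma_2^{(01)}$. The boosted algorithm looks ahead two bits at every step, triggers a detour whenever the current buffer and bit pattern allow a profitable one, and runs greedy otherwise. The ``backtracking'' view of this is that we detect patterns that reveal in hindsight that the earlier choice to match was suboptimal, and commit to the alternative over two steps.

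The analysis splits into three pieces. For the drift \eqref{eq:algorithmstep1}, I would first check that plain greedy from $1^k$ has zero expected length change by \cref{prop:greedysrw}: with probability $\tfrac{1}{2}$ the first bit is $1$ and the cycle ends at $1^{k-1}$ (change $-1$), and with probability $\tfrac{1}{2}$ the first bit is $0$ and the cycle ends at $0^{1+Z}$ with $Z \sim \NB(k)$ (change mean $+1$, variance $2k$). I would then compute the expected length savings contributed by the detours, which fire with probability $1 - O((7/8)^k)$ per cycle (each step provides an independent chance and $\Theta(k)$ steps are available), and tune the threshold and firing rule so that the aggregate expected savings is exactly $2$. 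The $O((7/8)^k)$ correction absorbs the rare event that no detour opportunity arises during the cycle. For the second moment \eqref{eq:algorithmstep2}, the variance decomposes into $k+1$ from the greedy portion (a direct computation using the $\NB(k)$ excursion variance $2k$) and at most $2k + O(1)$ from the detours (controlled by ensuring each detour fires at a buffer length that is concentrated around a fixed value, rather than an arbitrary $O(k)$-variable $a$), summing to $3k + O(1)$. For the tail bounds \eqref{eq:algorithmstep3} and \eqref{eq:algorithmstep4}, the underlying process is a bounded-increment random walk except at the $O(1)$ detours per cycle (each of bounded jump size), so standard Hoeffding/Azuma concentration on the hitting time of a one-run state delivers the exponentially small tails.

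The main obstacle will be calibrating the detour firing rule so that the drift is exactly $-2$ while the variance stays at most $3k + O(1)$: detours simultaneously shift both quantities, so the threshold, the firing condition, and the maximum number of detours per cycle must be jointly tuned. In particular, aggressive firing inflates the variance (because the savings $a-3$ depends on the random firing position $a$), while conservative firing undershoots the target drift. A secondary subtlety is the small-$k$ regime where the cycle terminates before any detour can fire, which produces the $O((7/8)^k)$ correction in \eqref{eq:algorithmstep1} and requires careful treatment of the reflective boundary of the greedy random walk near $0$.
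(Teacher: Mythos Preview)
Your core detour is impossible as stated. Buffer threads are $1$-Lipschitz in length: at each step the length changes by exactly $\pm 1$. So from $1^a 0^b$ (length $a+b$) you cannot reach $0^b 1$ (length $b+1$) in two steps once $a\ge 4$. Concretely, appending the first $1$ gives $1^a 0^b 1$, and then matching the second $1$ gives $\tail(1^a 0^b 1)=1^{a-1}0^b 1$, not $0^b 1$. There is no two-step move that drops the leading $1^a$ block; the only way to erase that block is to match $a$ input $1$'s, which takes at least $a$ steps. This knocks out the ``savings of $a-3$'' calculation and with it the entire drift analysis.

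The paper's mechanism is structurally different: rather than a single local detour, it runs a three-phase procedure that tracks a \emph{set} of candidate buffers simultaneously via an auxiliary alphabet symbol $\mathrm{i}$. In the indicator phase it reads bits until the $k$ leading $1$'s are matched, recording at each matched $1$ that both the greedy and non-greedy choices were available (this is the $\mathrm{i}$). In the turnover and activation phases it cashes in these deferred choices: each indicator gives an independent $\tfrac12$ chance to shorten the final buffer by $2$, and the number of usable indicators is governed by a binomial/geometric interaction that produces the $-2+O((7/8)^k)$ drift and the $3k+O(1)$ second moment via explicit distributional identities ($X\sim\NB(k)$, $Y\sim\NB(X-1)$, $Z\sim\geom(1/2)$, $C_3\sim\min(\bin(M,1/2),Z)$). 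The constants $-2$ and $3k$ fall out of these computations rather than being tuned by a threshold; your plan to ``calibrate the firing rule so that the drift is exactly $-2$'' has no analogue here, and I do not see how to realise it with any bounded-lookahead local rule. You would need to rethink the boost so that the savings accumulate over $\Theta(k)$ deferred binary choices rather than a single jump.
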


\begin{remark} It is possible to use this lemma to implement a linear-time algorithm to find an explicit shuffle square partition of a random shuffle square $s\in \{0,1\}^{2n}$, with success probability $1-o(1)$. Algorithms~\ref{alg:indicatorphase},~\ref{alg:turnoverphase}, and~\ref{alg:activationphase} implement the three phases of this lemma in pseudocode. By iterating the lemma, we obtain an increasing sequence of times $(T_m)_m$ together with sequence of constant buffers $({j_m}^{X_m})_m$, where $j_m \in \{0,1\}$. In Lemma~\ref{lem:boosted_algorithm}, we show that $X_m$ remains small on average. Repeating the algorithm until $T_{m+1}\approx n-\log n$, we then use brute force to optimally partition the remaining suffix of $s$ on $O(n)$ time. We omit the details. This average-case result contrasts with the theorem of Boulteau and Vialette~\cite{binarynphard} that the decision problem is NP-complete in general.
\end{remark}

Before we prove Lemma~\ref{lem:algorithm_step}, we introduce several definitions. The idea is to iteratively construct a `quasi-buffer' thread $(W_t)_t$ of words in $\{0,1,\rm i\}^*$ in three phases such that the quasi-buffer tracks a set of possible buffers at each phase. We refer to the `i's as \textit{indicators}. Roughly speaking, an indicator i indicates the location of a special $1$ for which we are delaying the decision of whether or not to match it until later bits of $s$ are revealed.

\begin{example}
    Suppose $111\in B_{t_0}$ and $s[t_0+1,t_0+10]=0110111010$. Phase 1, the indicator phase, reads bits in from $s$ until three $1$'s are seen. In this case the bits are $01101$. The quasi-buffer after this phase is $0\rm i0$, signifying that both $1010$ and $00$ are buffers.
    
    Phase 2, the turnover phase, reads in the next run of $1$'s along with the single following $0$. In this case it is $110$, so we record that $\rm i0$ is a quasi-buffer now, signifying that both $101$ and $011$ are buffers.

    Phase 3, the activation phase, reads until it sees as many $0$'s as are in the quasi-buffer (in this case $1$). In this case it sees $10$. Using this information, it identifies that $101$ is the more advantageous result from the previous phase, matching to find that $1$ is in the final buffer. The greedy algorithm would have produced the longer buffer $111$.
\end{example}

For a word $w\in \{0,1,\rm i\}$, let $\overline w=w(\{j\mid w(j)\neq \rm i\})$ be the word obtained from $w$ by deleting all indicators. Let $\#\rm i(w)=\{j\mid w(j)=\rm i\}$ be the number of indicators in $w$. An i\textit{-partition} of $w$ is a tuple $(u_0,u_1,\ldots, u_r)$ of words in $\{0,1,\rm i\}^*$ with $0\leq r\leq \#\rm i(w)$ such that $w=u_0\rm iu_1\cdots \rm i u_r$. Observe that a word $w$ has $2^{\#\rm i(w)}$ distinct i-partitions. For example, $(w)$ is a trivial i-partition of $w$. For this section, let $s\in \{0,1\}^\omega$ be uniformly random and $B_t=B_t(s)$. We proceed algorithmically in three phases; first is the indicator phase.

\begin{definition}Define a set $\ibuf_t\subseteq \{0,1,\rm i\}^*$ of \textit{indicator-buffers} at time $t$. We say that $w\in \ibuf_t$ if for every i-partition $w=u_0\rm iu_1\cdots \rm i u_r$, we have $1^r\overline{u_0}1\overline{u_1}\cdots 1\overline{u_r}\in B_t$. 
\end{definition}

\noindent An equivalent recursive condition for $w\in \ibuf_t$ is that either $w\in B_t$ (in the case $w$ has no indicators) or for any i-bipartition $w=u_0\rm iu_1$, we have both $u_0u_1\in \ibuf_t$ and $1u_01u_1\in \ibuf_t$ (corresponding to a non-greedy decision at i). Also, note that $\ibuf_t$ is closed under deletion of indicators, and thus if $w\in \ibuf_t$ then $\overline w\in B_t$.

The first of three phases we call the \textit{indicator phase}. If the initial buffer is $1^k$, then the indicator phase reads in bits of $s$ until exactly $k$ $1$'s are reached, and places indicators at the locations of those $1$'s immediately followed by $0$'s. 

\begin{lemma}[Indicator phase]\label{lem:indicatorphase} Let $k\geq 1$. If $1^k\in B_{t_0}$, then there exist random variables $X\sim \NB(k)$ and $M\sim \max(\bin(k-1,1/2)-1,0)$, and $T_1=t_0+k+X$ such that, if $X=0$, then $\emptyword\in B_{T_1}$, and if $X>0$, then there exist $X_{0},\ldots, X_{M}>0$ such that $\sum_{j=0}^MX_j=X$ and we have
\[
0^{X_{0}}\rm i0^{X_{1}}\rm i\cdots \rm i0^{X_{M}}\in \ibuf_t.
\]
\end{lemma}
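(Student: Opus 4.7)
The plan is to process bits of $s$ one at a time starting at position $t_0+1$, stopping at the first time $T_1$ when $k$ ones have been read. Writing $T_1 = t_0 + k + X$ with $X$ the number of zeros read, the bits of $s$ after $t_0$ are i.i.d.\ $\bern(1/2)$ and independent of the event $1^k \in B_{t_0}$, giving $X \sim \NB(k)$ immediately. If $X=0$ then $s[t_0+1, T_1] = 1^k$, and extending any partition witnessing $1^k \in B_{t_0}$ by assigning each new 1 to the shorter subsequence produces $\emptyword \in B_{T_1}$.

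For $X>0$, decompose $s[t_0+1,T_1] = 0^{\alpha_0} 1\, 0^{\alpha_1} 1 \cdots 0^{\alpha_{k-1}} 1$ and set $J' = \{i \in \{1,\ldots,k-1\} : \alpha_i > 0\} = \{j_1 < \cdots < j_{L'}\}$. The variables $\bbone_{\alpha_i > 0}$ for $i \ge 1$ are i.i.d.\ $\bern(1/2)$, so $L' \sim \bin(k-1,1/2)$, and setting $M := \max(L'-1, 0)$ produces the claimed marginal for $M$. Define the zero-run lengths of the target quasi-buffer by absorbing the leading zero-run $\alpha_0$ into the first interior one: $X_0 := \alpha_0 + \alpha_{j_1}$ and $X_i := \alpha_{j_{i+1}}$ for $1 \le i \le M$, with the trivial convention $X_0 := \alpha_0 = X$ when $L' = 0$. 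By construction each $X_i \ge 1$ and $\sum_i X_i = X$.

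The central step is verifying $Q := 0^{X_0}\rm i 0^{X_1} \cdots \rm i 0^{X_M} \in \ibuf_{T_1}$. By definition of $\ibuf$, this reduces to showing that for every subset $S \subseteq \{1,\ldots,M\}$ of size $r$, the consolidated target word $w_S := 1^r 0^{Z_0} 1\,0^{Z_1} \cdots 1\,0^{Z_r}$ (where each $Z_j$ is the sum of the $X_i$'s in the $j$-th piece of the $\rm i$-partition determined by $S$) lies in $B_{T_1}$. Starting from a partition $(A_1^0, A_2^0)$ of $[t_0]$ with $s(A_1^0) = s(A_2^0) \circ 1^k$, I extend it to $(A_1, A_2)$ of $[T_1]$ by assigning every 0-bit of $s[t_0+1,T_1]$ to $A_1$, the first 1 of each 1-run indexed by $S$ also to $A_1$, and every other 1 to $A_2$. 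A direct count gives $s(A_2) = s(A_2^0) \circ 1^{k-r}$ and $s(A_1) = s(A_2^0) \circ 1^k \circ (0^{Z_0} 1\,0^{Z_1}\cdots 1\,0^{Z_r})$, so $s(A_1) = s(A_2) \circ w_S$ as required.

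The main obstacle is the combinatorial bookkeeping: correctly identifying which ``interior'' 1-run of $s[t_0+1,T_1]$ corresponds to each indicator in $Q$, accounting for the absorbed leading zero-run $\alpha_0$ (which forces the first 1-run, namely the one between $0^{\alpha_0}$ and $0^{\alpha_{j_1}}$, to be sent to $A_2$ in full regardless of $S$), and verifying that the contributions from 0-bits together with the chosen initial 1's of $S$-indexed 1-runs combine to form exactly $w_S$. Once this association is pinned down, the identities for $s(A_1)$ and $s(A_2)$ follow mechanically from the run structure of $s[t_0+1,T_1]$.
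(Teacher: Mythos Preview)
Your proposal is correct and arrives at the same random variables $X,M,T_1,(X_i)$ as the paper, but the verification that $Q=0^{X_0}\mathrm{i}\cdots\mathrm{i}0^{X_M}\in\ibuf_{T_1}$ proceeds along a genuinely different route. The paper builds a quasi-buffer \emph{thread} $(W_t)_{t\le T_1}$ with $W_0=1^k$ and the rule $W_t=W_{t-1}\circ 0$ or $W_t=\tail(W_{t-1})\circ\mathrm{i}$, proves by a short induction on $t$ that $W_t\in\ibuf_t$, and then obtains $Q$ from $W_{T_1}$ by deleting indicators (using that $\ibuf_t$ is closed under indicator deletion). You instead unpack the definition of $\ibuf_{T_1}$ and, for each subset $S\subseteq\{1,\ldots,M\}$, exhibit a witnessing partition of $[T_1]$ directly: send all zeros and one designated $1$ per selected run to $A_1$, everything else to $A_2$. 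Your approach is more elementary in that it never touches the recursive structure of $\ibuf_t$ and makes the buffer-thread interpretation of each $\mathrm{i}$ completely explicit; the price is the bookkeeping you flag (the correspondence is: indicator $m$ in $Q$ is $\mathrm{i}_{j_{m+1}}$ in the paper's $W_{T_1}$, and the $1$ you should send to $A_1$ is the $(j_m+1)$-th one, i.e.\ the first $1$ of the $(m{+}1)$-st $1$-run; both the first and the last $1$-runs are always sent entirely to $A_2$). The paper's inductive approach hides this bookkeeping inside the closure-under-deletion step and scales more cleanly to the later phases, but your direct construction is a perfectly valid and arguably more transparent proof of this particular lemma.
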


\begin{algorithm}
\label{alg:indicatorphase}
\DontPrintSemicolon
\KwOut{A time $T^*$ and word $W_{T^*}$ equaling either $0^{X^*}$ or $1^{X^*}$.}
$W_0 \get 1^k$\;
\For{$t\get 1$ \KwTo $\infty$}{
\uIf{$s(t) = 0$}{
    $W_t \get W_{t-1} \circ 0$\;
}
\ElseIf{$s(t) = 1$}{
    $W_t \get \tt{tail}(W_{t-1}) \circ \text{i}$\;
    \If{$W_t(1) \in \{0,\emph{i}\}$}{
        $T_1\get t$\;
        Obtain $W_t'$ from $W_t$ by deleting each `i' not followed by $0$.\;
        Obtain $W_t''$ from $W_t'$ by deleting the leftmost `i', if any exists.\;
        \eIf{$W_t''=\emptyword$}{
        \Return{$(\emptyword,T_1)$}
        }{
        \Return{\ett{Turnover\_Phase}$(W_t'',s,T_1)$}}
    }
}
}
\caption{\texttt{Indicator\_Phase}$(s,k)$}
\end{algorithm}

\begin{proof} For simplicity, we assume $t_0=0$. Let $W_0=1^k$. Choose the unique prefix of $s$ with the structure $s[1,T_1]=0^{G_0}10^{G_1}\cdots 0^{G_{k-1}}1$, so $T_1$ is the position of the $k\nth$ 1. Note that $G_0,\ldots, G_{k-1}\sim \geom(1/2)$ are mutually independent and $X\coloneqq \sum_iG_i\sim \NB(k)$ counts the total number of $0$'s. Let $T_1=k+X$, so these have the desired distributions by construction.

For all $t\leq T_1$, construct $W_t$ recursively. If $s(t)=0$ then $W_t=W_{t-1}\circ 0$, and if $s(t)=1$ then $W_t=\rm{tail}(W_{t-1})\circ \rm i$. Observe that for each $t$, there are some $\ell,g$ such that $s[1,t]=0^{G_0}10^{G_1}\cdots 0^{G_{\ell-1}}10^{g}$, and then we have $W_t=1^{k-\ell}0^{G_0}\rm i0^{G_1}\cdots 0^{G_{\ell-1}}\rm i0^{g}$. Now we prove by induction on $t$ that for all $t\leq T_1$, $W_t\in\ibuf_t$.

By definition $W_0=1^k\in I_0$. Observe that for all $1\leq t\leq T_1$, we have $W_{t-1}(1)=1$. Suppose that $s(t)=0$ and $W_{t-1}\in \ibuf_{t-1}$. We want to show that $W_t=W_{t-1}\circ 0\in \ibuf_t$, so let $(u_0,\ldots,u_r0)$ be an i-partition of $W_t$. By induction $1^r\overline{u_0}1\cdots 1\overline{u_r}\in B_{t-1}$, so by~\cref{prop:buffersetdefn}, $1^r\overline{u_0}1\cdots 1\overline{u_r}0\in B_{t}$, which proves that $W_t\in \ibuf_t$.

Suppose that $s(t)=1$, and let $(u_0,\ldots, u_r)$ be an i-partition of $W_t$. There are two cases. If $u_r= \emptyword$, then $(1u_0,\ldots, u_{r-1})$ is an i-partition of $W_{t-1}$, so $1^r\overline{u_0}1\cdots 1\overline{u_{r-1}}\in B_{t-1}$, and therefore $1^r\overline{u_0}1\cdots 1\overline {u_{r-1}}1\overline\emptyword\in B_t$ as desired. If $u_r\neq \emptyword$ then $u_r=u_r'\rm i$ and $(1u_0,\ldots, u_r')$ is an i-partition of $W_{t-1}$. Thus $1^{r+1}\overline{u_0}1\cdots 1\overline {u_r'}\in B_{t-1}$, so $1^{r}\overline{u_0}1\cdots 1\overline {u_r}\in B_t$, as desired. We conclude
\[
W_{T_1}=0^{G_0}\rm i0^{G_1}\rm i\cdots 0^{G_{k-1}}\rm i\in \ibuf_{T_1}.
\]
If $X=0$, then $\overline{W_{T_1}}=\emptyword\in B_{T_1}$. If not, we modify $W_{T_1}$ to simplify future analysis. Let $W_{T_1}'$ be obtained from $W_{T_1}$ by deleting each indicator which is not directly followed by a zero, i.e.\ the $j\nth$ indicator is deleted if $G_j=0$. Let $W''_{T_1}$ be obtained from $W'_{T_1}$ by deleting the leftmost remaining indicator, if any exists. Observe that we can write
\[
W''_{T_1}=0^{X_{0}}\rm i0^{X_{1}}\rm i\cdots \rm i0^{X_{M}},
\]
with all $X_i$ positive and summing to $X$. We also have $W''_{T_1} \in \ibuf_{T_1}$ because $\ibuf_{T_1}$ is closed under deleting indicators. Finally, we observe that $M\sim \max(\bin(k-1,1/2)-1,0)$, since each of the first $k-1$ indicators survives the map $W_{T_1}\mapsto W'_{T_1}$ independently with probability $1/2$, and then one more is deleted in the map $W_{T_1}'\mapsto W_{T_1}''$. If $X=0$, then for consistency we set $M=0$.
\end{proof}

\noindent Next, we pass the output from the indicator phase to the turnover phase. This phase is the simplest: if the next run of $s$ is a run of $1$'s, we record this in the (quasi-)buffer. If the next run is a run of $0$'s, turnover fails and we end this cycle of the boosted greedy algorithm prematurely, acting exactly the same as the unmodified greedy algorithm.

\begin{definition} At time $t$, for $\turn\geq 0$, define a set $\abuf_{t,\turn}\subseteq \{0,1,\rm i\}^*$ of \textit{activation-buffers} as follows. We say that $w\in \abuf_{t,\turn}$ if for every i-partition $w=u_0\rm iu_1\cdots \rm i u_r$ such that $r\leq \turn$, we have $\overline{u_0}1\overline{u_1}\cdots 1\overline{u_r}1^{\turn-r}\in B_t$.
\end{definition}

An equivalent recursive condition for $w\in \abuf_{t,z}$ is that $w1^z\in B_t$ or else for any i-bipartition $w=u_0\rm iu_1$ we have $u_0u_1\in \abuf_{t,z}$, and if $z>0$ then $u_01u_0\in \abuf_{t,z-1}$. We remark that the indicators have a different interpretation in $J_{t,\turn}$ than in $I_t$. In $I_t$, the indicators could be substituted for $1$'s with additional 1's inserted on at the \textit{front} of $w$, whereas in $J_{t,\turn}$, we have a cache of $z$ ``extra" 1's at the \textit{end} of $w$, and we can insert these $1$'s in the place of up to $z$ indicators.

\begin{lemma}[Turnover Phase]\label{lem:turnoverphase} Let $t_1\in\N$, and suppose $0W\in\ibuf_{t_1}$ for $W\in \{0,1,\rm i\}^*$. Then there exists $Z\sim \geom(1/2)$ and $T_2=t_1+Z+1$, such that $W\in J_{T_2,Z}$. In particular, if $Z=0$ then $\overline W\in B_{T_1+1}$.
\end{lemma}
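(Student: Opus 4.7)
The plan is to define $Z$ as the length of the next maximal run of $1$'s in $s$ starting at position $t_1+1$; explicitly, $s[t_1+1,t_1+Z]=1^Z$ and $s(t_1+Z+1)=0$. Since the bits of $s$ are i.i.d. uniform on $\{0,1\}$, immediately $Z\sim\geom(1/2)$ and $T_2=t_1+Z+1$, so both distributional claims come for free and only the containment $W\in J_{T_2,Z}$ requires work.

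To establish $W\in J_{T_2,Z}$, I would fix an arbitrary i-partition $W=u_0\,\mathrm{i}\,u_1\cdots \mathrm{i}\,u_r$ with $r\le Z$ and construct a buffer thread witnessing
\[
c \coloneqq \overline{u_0}\,1\,\overline{u_1}\cdots 1\,\overline{u_r}\,1^{Z-r}\in B_{T_2}.
\]
The key observation is that prepending $0$ to $W$ preserves indicator positions, so this i-partition extends canonically to the i-partition $0W=(0u_0)\,\mathrm{i}\,u_1\cdots \mathrm{i}\,u_r$ of $0W$, and the hypothesis $0W\in\ibuf_{t_1}$ then yields the starting buffer $b\coloneqq 1^r\,0\,\overline{u_0}\,1\,\overline{u_1}\cdots 1\,\overline{u_r}\in B_{t_1}$. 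Evolving $b$ forward via \cref{prop:buffersetdefn}, the first $r$ input bits (all $1$'s) match the leading $1^r$ of $b$, so I take $r$ consecutive $\tail$ steps to reach $0\,\overline{u_0}\,1\cdots 1\,\overline{u_r}\in B_{t_1+r}$; the next $Z-r$ bits (again $1$'s) are simply appended, giving $0\,\overline{u_0}\,1\cdots 1\,\overline{u_r}\,1^{Z-r}\in B_{t_1+Z}$; finally $s(T_2)=0$ matches the leading $0$ of this buffer, and one last $\tail$ step produces $c\in B_{T_2}$.

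The ``in particular'' clause is the case $Z=0$: then $r=0$ is forced, the only admissible i-partition is the trivial one, my construction collapses to a single $\tail$ step applied to $0\overline{W}\in B_{t_1}$ against $s(t_1+1)=0$, and $\overline W\in B_{t_1+1}$ drops out. I do not anticipate any real obstacle here — the argument is essentially bookkeeping, matching the structured prefix $1^Z 0$ of $s$ against the $1^r 0$ prefix of $b$ — but the one thing to verify carefully is that the $r$ tail-steps can be scheduled \emph{before} any append-steps, which is legitimate precisely because the $r$ leading symbols of $b$ and the $r$ corresponding bits of $s$ are all $1$.
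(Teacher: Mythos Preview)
Your proposal is correct and follows essentially the same approach as the paper: define $Z$ via the next run of $1$'s, extend the i-partition of $W$ to one of $0W$ to obtain $1^r 0\,\overline{u_0}1\cdots 1\overline{u_r}\in B_{t_1}$, then perform $r$ tail steps, $Z-r$ appends, and a final tail on the leading $0$. The only difference is cosmetic---you spell out the buffer-thread evolution step by step where the paper compresses it into one sentence.
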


\begin{algorithm}
\label{alg:turnoverphase}
\DontPrintSemicolon
$Z \get 0$\;
\eIf{$s(t_1+1)=0$}{
    $W_{t_1+1}\get \overline{\tt{tail}(W_{t_1}'')}$\;
    \Return{$(W_{t_1+1},t_1+1)$}
}{
$Z\get 1$\;
\For{$t\get t_1+2$ \KwTo $\infty$}{
\uIf{$s(t) = 1$}{
    $Z \get Z+ 1$\;
}
\ElseIf{$s(t)=0$}{
    $T_2\get t$\;
    $W_{T_2}\get \tt{tail}(W_{t_1}'')$\;
    \Return{\ett{Activation\_Phase}$(W_{T_2},s,T_2,Z)$}
}}}
\caption{\tt{Turnover\_Phase}$(W_{t_1}'',s,T_1)$}
\end{algorithm}

\begin{proof} Let $T_2$ be the position of the next zero in $s$, so $s[t_1+1,T_2]=1^Z0$ with $Z\sim \rm{Geom}(1/2)$. Now, we prove directly that $W\in \abuf_{T_2,Z}$. Let $(u_0,\ldots, u_r)$ be an i-partition of $W$ such that $r\leq z$. Because $0W\in I_{t_1}$, we have $1^r\overline{0u_0} 1\cdots 1\overline{u_r}\in B_{t_1}$. Then by deleting the first $r$ 1's and appending the next $Z-r$, we have $\overline{0u_0} 1\cdots 1\overline{u_r} 1^{Z-r}\in B_{t_1+Z}$. Then deleting the first $0$, we have $\overline{u_0}1\cdots 1\overline{u_r} 1^{Z-r}\in B_{T_2}$.
\end{proof}

\noindent Lastly, we pass the output of the turnover phase into the activation phase, which reads in bits until it matches every $0$ in this output.

\begin{lemma}[Activation phase]\label{lem:activationphase} Let $t_2\in\N$, $z\geq 1$. Suppose $W= 0^{x_0-1}\rm i0^{x_1}\rm i\cdots \rm i0^{x_m}\in \abuf_{t_2,z}$ for $m\geq 0$ and $x_0,\ldots, x_m>0$, and let $x=\sum_ix_i$. Then there exists $Y\sim \NB(x-1)$, $C_3\sim \min(\bin(m,1/2),z)$ and $T_3=t_2+Y+x-1$ such that $1^{Y-2C_3+z}\in B_{T_3}$.
\end{lemma}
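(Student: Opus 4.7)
The plan is to construct an explicit buffer thread starting from a carefully chosen element of $B_{t_2}(s)$ guaranteed by $W\in\abuf_{t_2,z}$, processing the forthcoming bits of $s$, and ending at the clean buffer $1^{Y-2C_3+z}$ at time $T_3$. Here $Y$ and $C_3$ will be deterministic functions of $s$ whose marginal distributions are as claimed.

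First I would parse the forthcoming bits as $s[t_2+1,T_3]=1^{y_0}\,0\,1^{y_1}\,0\,\cdots\,1^{y_{x-2}}\,0$, where $T_3$ is the position of the $(x-1)$-th zero after $t_2$. Since the $y_j$ are i.i.d.\ $\geom(1/2)$, we get $Y\coloneqq\sum_j y_j\sim \NB(x-1)$ and $T_3=t_2+Y+x-1$. For each indicator $j\in\{1,\ldots,m\}$, set $\alpha_{j-1}\coloneqq(x_0-1)+x_1+\cdots+x_{j-1}$, the number of zeros of $W$ appearing before the $j$-th indicator, and call $j$ \emph{flagged} if $y_{\alpha_{j-1}}\geq 1$. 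Since the indices $\alpha_0<\alpha_1<\cdots<\alpha_{m-1}$ are distinct, the flags are independent $\bern(1/2)$'s, so the total number of flagged indicators $F$ satisfies $F\sim\bin(m,1/2)$. Set $C_3\coloneqq\min(F,z)\sim\min(\bin(m,1/2),z)$ and \emph{activate} the first $C_3$ flagged indicators, labelling their indices $j_1<\cdots<j_{C_3}$.

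Applying the definition of $\abuf_{t_2,z}$ to the i-partition of $W$ whose $r=C_3\leq z$ separators are the activated indicators, I obtain the starting buffer
\[
B_{\mathrm{start}} = 0^{a_0}\,1\,0^{a_1}\,1\,\cdots\,1\,0^{a_{C_3}}\,1^{z-C_3}\in B_{t_2}(s),
\]
where $a_i$ is the total number of zeros of $W$ lying strictly between the $i$-th and $(i{+}1)$-th activated indicators (with $a_0$ and $a_{C_3}$ being the leading and trailing blocks), so $\sum_i a_i=x-1$, and setting $\sigma_i\coloneqq a_0+\cdots+a_{i-1}$ we have $\sigma_i=\alpha_{j_i-1}$ for $i\geq 1$. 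The buffer thread starts at $B_{\mathrm{start}}$ and reads the bits of $s[t_2+1,T_3]$ according to the rule: shorten on every $0$; shorten on a $1$ only when that bit is the first bit of gap $y_{\sigma_i}$ for some $i\in\{1,\ldots,C_3\}$, thereby consuming the separator $1$ coming from activated indicator $j_i$; otherwise, append.

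Correctness follows by induction on the number of zeros of $s$ processed so far, with invariant that the buffer always equals (unused suffix of $B_{\mathrm{start}}$) concatenated with $1^{(\text{ones appended so far})}$. The main obstacle, and the crucial point to check, is that every chosen $1$-shortening is legal, i.e.\ that the buffer's front truly is $1$ at those moments. This holds because the $i$-th designated shortening occurs exactly after the leading $0^{a_{i-1}}$ block has been exhausted (so the front is precisely the activated separator $1$), and the triggering bit, the first one of gap $y_{\sigma_i}=y_{\alpha_{j_i-1}}$, exists because $j_i$ was flagged; meanwhile, when $F>z$ the ones sitting in the gaps $y_{\alpha_{j-1}}$ of unactivated flagged indicators fall in the middle of a merged zero-block (since $\alpha_{j-1}\neq\sigma_i$ for any $i$) and append harmlessly. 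Tallying $x-1+C_3$ shortenings (one per zero of $s$ and one per activated separator) and $Y-C_3$ appends against the initial length $x+z-1$ yields a final length of $Y-2C_3+z$, and since every zero and every separator $1$ has been consumed the final buffer consists purely of ones, giving $1^{Y-2C_3+z}\in B_{T_3}(s)$ as desired.
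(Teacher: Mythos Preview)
Your proof is correct and takes a genuinely different route from the paper's. The paper runs an \emph{online} construction: it carries the quasi-buffer $W_t\in\{0,1,\mathrm{i}\}^*$ forward step by step, with four cases depending on $s(t)$ and the leading symbol of $W_{t-1}$, and proves by induction that $W_t\in J_{t,\,z-C_3(t)}$ throughout. The variable $C_3$ is incremented on the fly each time an indicator reaches the front simultaneously with an incoming $1$ (subject to the budget $z$), and the conclusion is read off at the end from $W_{T_3}=1^{Y-C_3}\in J_{T_3,\,z-C_3}$.

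You instead work \emph{offline}: you first parse all of $s[t_2+1,T_3]$ to determine the flags, $F$, and $C_3=\min(F,z)$, and only then invoke the defining property of $J_{t_2,z}$ a single time to extract one concrete element $B_{\mathrm{start}}\in B_{t_2}(s)$, chosen with hindsight to align its separator $1$'s with the gaps $y_{\sigma_i}$. From there you run a plain buffer thread with no $\mathrm{i}$-symbols and no further reference to the $J$-structure. This buys you a very clean derivation of the law of $C_3$---the flags are visibly independent $\mathrm{Bern}(1/2)$'s at distinct gap indices $\alpha_0<\cdots<\alpha_{m-1}$---and avoids maintaining the $J$-invariant through four inductive cases. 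What the paper's approach buys is that it is implementable without look-ahead, which is precisely what underpins their remark about a linear-time algorithm for partitioning random shuffle squares. Your invariant (buffer $=$ unused suffix of $B_{\mathrm{start}}$ followed by appended $1$'s) also handles the legality of all $0$-shortenings, not just the designated $1$-shortenings you single out, since the structure $0^{a_0}10^{a_1}\cdots 10^{a_{C_3}}1^{z-C_3}$ with $a_i\ge 1$ for $i\ge 1$ guarantees the front is a $0$ at each of those moments.
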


\begin{algorithm}
\label{alg:activationphase}
\DontPrintSemicolon
$C_3\get 0$\;
\For{$t\get t_2+1$ \KwTo $\infty$}{
\uIf{$s(t) = 0$}{
    Delete any leading `i's from $W_{t-1}$.\\
    $W_t\get \tt{tail}(W_{t-1})$\;
}
\uElseIf{$s(t) = 1\emph{\textbf{ and }}W(1) = \emph{\text{i}}\emph{\textbf{ and }}C_3<z$}{
    $C_3 \get C_3 + 1$\;
    $W_t\get \tt{tail}(W_{t-1})$\;
}
\ElseIf{$s(t) = 1$}{
    $W_t\get W_{t-1}\circ 1$\;
}

\If{$W_t(1)=1$}{
    \Return{$ W_t\circ 1^{z-C_3}$}
}}
\caption{\tt{Activation\_Phase}$(W_{t_2},s,t_2,z)$}
\end{algorithm}

In this phase the variable $C_3$ gives the key ``boost" to the greedy algorithm. Observe that there is exactly one case (iii) below where the buffer length decreases when the greedy buffer would have increased. In this case, we are using an indicator to make an under-the-hood substitution using the previous two phases such that ultimately $X^*$ decreases by $2$ on average.

\begin{proof}
Let $T_3$ be the time of the $(x-1)\nth$ next zero in $s$, so $s[t_2+1,T_3]=1^{G_0}0\cdots 1^{G_{x-2}}0$, where $G_0,\ldots, G_{x-2}\sim \geom(1/2)$ are are mutually independent and $Y\coloneqq \sum_iG_i\sim \NB(x-1)$ is the number of $1$'s in this interval. Let $W_{t_2}=W$ and for all $t\in [t_2+1,\ldots, T_3]$, construct $W_t$ iteratively according to the following casework. Let $C_k(t)$ be the number of times $t'\leq t$ that case $k$ out of 4 occurs, initialized by $C_k(t_2)=0$.
\begin{enumerate}[(i)]
\setlength{\itemsep}{0pt}
\item If $s(t)=0$ and $W_{t-1}(1)=0$, let $W_t=\rm{tail}(W_{t-1})$.
\item If $s(t)=0$ and $W_{t-1}(1)=\rm i$, $W_{t-1}(2)=0$, let $W_t=\rm{tail}^2(W_{t-1})$.
\item If $s(t)=1$ and $W_{t-1}(1)=\rm i$, $W_{t-1}(2)=0$, and $C_3(t-1)<z$, let $W_t=\rm{tail}(W_{t-1})$.
\item If $s(t)=1$ and $W_{t-1}(1)=0$ or $C_3(t-1)=z$, let $W_t=W_{t-1}\circ 1$.
\end{enumerate}
The critical cases (ii) and (iii) exploit the flexibility of the artificial i character: if the corresponding input bit is a $0$, then we delete the $\rm i$ and match the next $0$, while if the corresponding bit is a $1$, then we treat $\rm i$ as a $1$. The variable $C_3(t)$ tracks that we have a total budget of $z$ i's that can be used as $1$'s. 

By this iteration, each step (i), (ii) deletes a zero and step (iv) adds a $1$, so $W_{T_3}=1^{C_4}=1^{Y-C_3}$
\begin{claim}\label{claim:activationinduction} For all $t\in [t_2,T_3]$, $W_t\in \abuf_{t,z-C_3(t)}$. 
\end{claim}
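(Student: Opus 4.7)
The plan is to prove the claim by induction on $t\in [t_2, T_3]$. The base case $t=t_2$ is immediate from the hypothesis $W\in \abuf_{t_2,z}$, together with $C_3(t_2)=0$. For the inductive step, assume $W_{t-1}\in \abuf_{t-1,z-C_3(t-1)}$ and fix any i-partition $W_t = u_0\rm i u_1\rm i\cdots \rm i u_r$ with $r\leq z-C_3(t)$. The strategy in each case is to lift this partition to an i-partition of $W_{t-1}$ whose number of indicators used lies within the allowed budget $z-C_3(t-1)$, invoke the inductive hypothesis to obtain a corresponding word in $B_{t-1}$, and then transition to $B_t$ via \cref{prop:buffersetdefn} using the new letter $s(t)$.

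Cases (i) and (iv) are the straightforward ones. In case (i), where $s(t)=0$ and $W_{t-1}=0\circ W_t$, the lift prepends $0$ to $u_0$ and keeps the same set of $r$ indicators; by induction $0\overline{u_0}1\overline{u_1}\cdots 1\overline{u_r}1^{z-C_3(t-1)-r}\in B_{t-1}$, and taking the tail with $s(t)=0$ yields the desired element of $B_t$. In case (iv), the appended $1$ becomes the last letter of $u_r$ in the lifted partition, so lifting preserves $r$, and appending $s(t)=1$ to the word produced by induction gives the required element. Case (ii) is essentially the same as (i): the lift prepends $\rm i 0$ to $u_0$ but leaves this new leading $\rm i$ unused in the partition, again preserving $r$, and one application of \cref{prop:buffersetdefn} with $s(t)=0$ completes the step.

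Case (iii) is the one where the indicator budget genuinely moves, and I expect it to be the main obstacle. Here $W_{t-1}= \rm i\circ W_t$, $C_3(t)=C_3(t-1)+1$, and $s(t)=1$. The lift uses the newly exposed leading $\rm i$ as part of the partition, giving $r+1$ indicators used rather than $r$. The required inequality $r+1\leq z-C_3(t-1)$ is equivalent to $r\leq z-C_3(t)$, which is our standing assumption, and it is precisely the precondition $C_3(t-1)<z$ in the case definition that makes $r+1$ a legal number of indicators to use in $W_{t-1}$. The inductive hypothesis then yields $1\overline{u_0}1\overline{u_1}\cdots 1\overline{u_r}1^{z-C_3(t)-r}\in B_{t-1}$, and taking the tail with $s(t)=1$ gives the required element of $B_t$, closing the induction. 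The bookkeeping to watch is that the number of trailing $1$'s, namely $z-C_3(t)-r$ on the $W_t$ side, must line up with $z-C_3(t-1)-r'$ on the $W_{t-1}$ side for whichever $r'\in\{r,r+1\}$ the lift uses; this matches in every case above.
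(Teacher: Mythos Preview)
Your proposal is correct and follows essentially the same approach as the paper: induction on $t$, with the same four-case analysis and the same lifts of i-partitions of $W_t$ to i-partitions of $W_{t-1}$. The only cosmetic difference is in case (ii), where the paper invokes closure of $\abuf_{t,z}$ under indicator deletion to reduce to case (i), while you phrase the same move as absorbing the leading $\rm i$ into $u_0$ without using it as a separator; these are equivalent since $\overline{\rm i0u_0}=0\overline{u_0}$.
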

\begin{subproof}[Proof of Claim] We apply induction on $t$. The base case $W_{t_2}\in \abuf_{t_2,z}$ is given. Assuming $W_{t-1} \in J_{t-1,z-C_3(t-1)}$, we show that $W_t \in J_{t,z-C_3(t)}$ under each of the four cases.

In case (i), let $(u_0,\ldots, u_r)$ be an i-partition of $W_t$ with $r\leq z-C_3(t)$. Since $C_3(t)=C_3(t-1)$, by assumption $\overline{0u_0}1\cdots \overline{u_r}1^{z-C_3(t)-r}\in B_{t-1}$, so then $\overline{u_0}1\cdots \overline{u_r}1^{z-C_3(t)-r}\in B_{t}$, proving that $W_t\in \abuf_{t,z-C_3(t)}$. 

In case (ii), by closure under indicator deletion, $\rm{tail}(W_{t-1})\in \abuf_{t-1,z-C_3(t)}$. Now identically to case (i), we have $\rm{tail}^2(W_{t-1})=W_t\in \abuf_{t,z-C_3(t)}$.

In case (iii), let $(u_0,\ldots, u_r)$ be an i-partition of $W_t$ with $r\leq z-C_3(t)=z-C_3(t-1)-1$. Since $W_{t-1}(1)=\rm i$, $(\emptyword,u_0,\ldots, u_r)$ is an i-partition of $W_{t-1}$, so $1\overline{u_0}1\cdots\overline{u_r}1^{z-C_3(t-1)-(r+1)}\in B_{t-1}$. Because $s(t)=1$, we have $\overline{u_0}1\cdots\overline{u_r}1^{z-C_3(t)-r}\in B_t$, which means $W_t\in \abuf_{t,z-C_3(t)}$.

In case (iv), let $(u_0,\ldots, u_r1)$ be an i-partition of $W_t$. Then $C_3(t)=C_3(t-1)$ and by induction
$\overline{u_0} 1\cdots\overline{u_r}1^{z-C_3(t)-r}\in B_{t-1}$. Since $s(t)=1$, we have $\overline{u_0} 1\cdots\overline{u_r1}1^{z-C_3(t)-r}\in B_t$, so $W_t\in \abuf_{t,z-C_3(t)}$, completing the proof by induction.
\end{subproof}
By Claim~\ref{claim:activationinduction}, we have $1^{Y-C_3}\in J_{T_3,z-C_3}$, where $C_3 \coloneqq C_3(T_3)$, and thus $1^{Y-2C_3+z}\in B_{T_3}$ as desired. It suffices to show that $C_3\sim \min(\bin(m,1/2),z)$. Suppose we have some time $t\in [t_2+1,T_3]$ such that $W_{t-1}(1)=\rm i$. At each of these times, with probability $1/2$ we have $s(t)=1$, in which case step (iii) will occur assuming $C_3 < z$. Note that regardless of $s(t)$, the leading indicator of $W_{t-1}$ will be deleted at time $t$. Since $\#\rm i(W_{t_2})=m$, there will be exactly $m$ such times $t\in [t_2+1,T_3]$, implying the desired distribution of $C_3$.
\end{proof}

\noindent Before proving the full \cref{lem:algorithm_step}, we give a quick Chernoff bound for negative binomials.

\begin{proposition}\label{prop:nbchernoff}
If $X\sim \NB(k)$, then for $\eps>0$,
\[
\PP[X\geq (1+\eps)k],\PP[X\leq (1-\eps)k]\leq e^{-\eps^2k/2(2+\eps)}.
\]
\end{proposition}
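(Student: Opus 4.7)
The plan is to reduce the negative binomial tail bound to a binomial tail bound and then apply Hoeffding's inequality. First, I would recall the standard coupling: if $X\sim \NB(k)$ then $X$ is distributed as the number of failures before the $k\nth$ success in an i.i.d.\ sequence of fair Bernoulli trials. Under this interpretation, for any integer $m\geq 0$, the event $\{X\geq m\}$ coincides with the event that among the first $k+m-1$ trials, fewer than $k$ are successes, giving
\[
\PP[X\geq m]=\PP[\bin(k+m-1,1/2)\leq k-1],
\]
and symmetrically $\PP[X\leq m]=\PP[\bin(k+m,1/2)\geq k]$.

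Next, I would apply Hoeffding's inequality in the form $\PP[|\bin(n,1/2)-n/2|\geq \lambda]\leq 2e^{-2\lambda^2/n}$. For the upper tail, take $m=\lceil(1+\eps)k\rceil$ and $n=k+m-1\leq (2+\eps)k$. The deviation between the threshold $k-1$ and the mean $n/2$ is $(n/2)-(k-1)=(m-k+1)/2$, which is at least $\eps k/2$. Hoeffding then gives
\[
\PP[X\geq (1+\eps)k]\leq \exp\!\left(-\frac{2(\eps k/2)^2}{n}\right)\leq \exp\!\left(-\frac{\eps^2 k}{2(2+\eps)}\right),
\]
as desired. For the lower tail, set $m=\lfloor(1-\eps)k\rfloor$ and $n=k+m\leq (2-\eps)k$; the deviation is again at least $\eps k/2$ and the same Hoeffding bound produces $\exp(-\eps^2 k/2(2-\eps))$, which is stronger than the stated bound since $2-\eps\leq 2+\eps$.

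There is no real obstacle here; the main things to be careful with are the directions of the inequalities in the coupling and the minor bookkeeping from the floor/ceiling, which only contributes lower-order corrections to the exponent and can be absorbed. Alternatively, one could prove the same bound directly via the Cram\'er--Chernoff method applied to the moment generating function $\Ex[e^{tX}]=(2-e^t)^{-k}$ of a sum of $k$ i.i.d.\ $\geom(1/2)$ variables, but the binomial route via Hoeffding is shorter and avoids optimizing the MGF by hand.
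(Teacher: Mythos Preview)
Your proof is correct and essentially identical to the paper's: both reduce the negative-binomial tail to a binomial tail via the standard coupling and then invoke the Hoeffding/Chernoff bound $e^{-2\lambda^2/n}$ with $\lambda=\eps k/2$ and $n\le(2+\eps)k$, handling integer rounding informally. The only cosmetic difference is that the paper phrases the coupling as $\PP[X\ge m]=\PP[\bin(m+k-1,1/2)\ge m]$ (counting failures) whereas you count successes, and you should quote the one-sided Hoeffding bound (without the factor $2$) since that is what you actually use.
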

\begin{proof}We use the identity that for all $m\in\N$,
$
\PP[X\geq m]=\PP_{Y\sim \bin(m+k-1,1/2)}[Y\geq m].
$
First, we round up so that $\eps k$ is an integer. Letting $m=(1+\eps)k$, and $Y\sim \bin((2+\eps)k,1/2)$, standard Chernoff bounds for the binomial distribution give 
\[
\PP[X\geq (1+\eps)k]\leq \PP[Y\geq \Ex[Y]+\eps k/2]\leq e^{-2(\eps k/2)^2/(2+\eps)k}=e^{-\eps^2k/2(2+\eps)}.
\]
The lower tail is similar.
\end{proof}

We are now ready to prove the main lemma.

\begin{proof}[Proof of ~\cref{lem:algorithm_step}]
Suppose $1^k\in B_{t_0}$; for simplicity assume $t_0=0$. We break into three cases depending on whether this iteration of the boosted greedy algorithm ends in the indicator phase, the turnover phase, or the activation phase. 

\textbf{Indicator phase.} \cref{lem:indicatorphase} gives random variables $X,M,T_1$, and if $X>0$ also $X_0,\ldots, X_M$ with the properties specified. If $X=0$, we say this iteration ends in the indicator phase and we let $X^*=X$ and $T^*=T_1$. 

\textbf{Turnover phase.} If $X>0$, then \cref{lem:turnoverphase} gives random variables $Z,T_2$, where $T_2=T_1+Z+1$, with the properties specified. If $Z=0$, then we say this iteration ends in the turnover phase. By the given properties, $0^{X-1}\in B_{T_2}$, so we let $X^*=X-1$ and $T^*=T_2$. 

\textbf{Activation phase.} If $Z>0$, then we say this iteration ends in the activation phase. \cref{lem:activationphase} gives random variables $Y,T_3$, and $C_3$ such that $1^{Y-2C_3+Z}\in B_{T_3}$, and then we let $X^*=Y-2C_3+Z$ and $T^*=T_3$. 

In all three cases, we have $0^{X^*}$ or $1^{X^*}$ in $B_{T^*}$ as desired. To unify the analysis of the three cases, we extend the domains of $Z,Y,C_3$ such that $(Z\mid X=0)\sim \geom(1/2)$, $(Y\mid X=0)=0$, and $(C_3\mid Z=0\vee M=0)=0$. With these extensions, $Y,C_3$ are compound random variables with conditional distribution
\[(Y\mid X)\sim \NB(\max(X-1,0))\quad \text{ and }\quad (C_3\mid M,Z)\sim \min(\bin(M,1/2),Z).\]
Let $C'=Z-C_3$, so $Y-2C_3+Z=Y-Z+2C'$ and we decompose $X^*$ in the following way, using that $Y\bbone_{X>0}=Y$ and $C'\bbone_{X,Z>0}=C'$,
\begin{align}
X^*&=0\bbone_{X=0}+(X-1)\bbone_{X>0,Z=0}+(Y-Z+2C')\bbone_{X,Z>0}\notag\\
&=(X-1)\bbone_{Z=0}+\bbone_{X=Z=0}+(Y-Z)\bbone_{Z>0}+Z\bbone_{X=0}+2C'\notag\\
&=\underbrace{(X-1)\bbone_{Z=0}+(Y-Z)\bbone_{Z>0}}_{ X^*_1}+\underbrace{\max(Z,1)\bbone_{X=0}+2C'}_{ X^*_2}\label{eq:X*expression}
\end{align}

We prove that $X^*_1$ has the desired statistics and $X^*_2$ is a negligible perturbation. First, consider $X^*_1$. Since $X,Z$ are independent, we immediately have $\Ex[(X-1)\bbone_{Z=0}]=\frac12 (k-1)$, and $\Ex[(X-1-k)^2\bbone_{Z=0}]=\frac 12(2k+1)$. Next we control the second term in $X_1^*$; both technical claims below are proven in Section~\ref{section:numericalclaims}.
\begin{claim}\label{claim:Ybounds} (Proved in~\cref{section:numericalclaims}.) With $X,Y,Z$ as defined above, $\Ex[(Y-Z)\bbone_{Z>0}]=\frac 12(k-3+2^{-k})$ and $\Ex[(Y-Z-k)^2\bbone_{Z>0}]<4k+9$.
\end{claim}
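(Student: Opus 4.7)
The plan is to exploit the fact that $Z$ is independent of $(X,Y)$, reducing the claim to computing low moments of $Y$ and of $Z$ separately. The moments of $Z$ are elementary: since $\PP[Z=z]=2^{-z-1}$, we get $\PP[Z>0]=1/2$ and, because $Z\bbone_{Z=0}=0$, also $\Ex[Z\bbone_{Z>0}]=\Ex[Z]=1$ and $\Ex[Z^2\bbone_{Z>0}]=\Ex[Z^2]=3$. For $X\sim \NB(k)$ I would use only $\Ex[X]=k$, $\var[X]=2k$, and $\PP[X=0]=2^{-k}$ (all $k$ trials must succeed immediately).

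To compute moments of $Y$, I would apply the tower law with $(Y\mid X)\sim \NB(\max(X-1,0))$, giving $\Ex[Y\mid X]=\max(X-1,0)$ and $\var[Y\mid X]=2\max(X-1,0)$. Writing $\max(X-1,0)=(X-1)+\bbone_{X=0}$ yields $\Ex[Y]=k-1+2^{-k}$; a similar expansion, using $\Ex[(X-1)^2]=\var[X]+(\Ex[X]-1)^2=2k+(k-1)^2$ and subtracting the $\bbone_{X=0}$ correction for the square, gives $\Ex[Y^2]=k^2+2k-1+2^{-k}$.

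With these in hand, the first identity follows immediately: by independence,
\[
\Ex[(Y-Z)\bbone_{Z>0}]=\Ex[Y]\PP[Z>0]-\Ex[Z\bbone_{Z>0}]=\tfrac12(k-1+2^{-k})-1=\tfrac12(k-3+2^{-k}).
\]
For the second bound, I would expand $(Y-Z-k)^2=Y^2-2Y(Z+k)+(Z+k)^2$ and apply independence term-by-term, using $\Ex[(Z+k)\bbone_{Z>0}]=1+k/2$ and $\Ex[(Z+k)^2\bbone_{Z>0}]=3+2k+k^2/2$. After substituting $\Ex[Y]$ and $\Ex[Y^2]$ from above, the $k^2$ contributions cancel and the result simplifies to $2k+\tfrac{9}{2}-2^{-k}\bigl(k+\tfrac{3}{2}\bigr)$, which is comfortably below $4k+9$ for every $k\ge 1$.

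The only genuine subtlety is the careful bookkeeping of the $\bbone_{X=0}$ correction arising from $\max(X-1,0)$ in both $\Ex[Y]$ and $\Ex[Y^2]$; once this is handled, the claim reduces entirely to the arithmetic of the final expansion. No probabilistic inequalities or limiting arguments are needed, and the exact (rather than just bounded) constant in the second moment even leaves substantial slack against the stated bound $4k+9$.
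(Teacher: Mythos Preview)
Your proof is correct; both your argument and the paper's reduce the claim to elementary moment computations using the independence of $Z$ from $(X,Y)$ and the tower law for $Y$ given $X$. The organization differs slightly: the paper handles the second moment by invoking memorylessness of the geometric distribution (replacing $(Z\mid Z>0)$ by $1+Z'$ with $Z'\sim\geom(1/2)$) and then decomposing $\Ex[(Y-Z'-k-1)^2]$ as $\var[Y]+\var[Z']+(\text{bias})^2$, bounding $\var[Y]$ via the law of total variance. You instead expand $(Y-Z-k)^2$ directly and compute $\Ex[Y^2]$ exactly, which involves a bit more arithmetic but avoids the variance inequality and yields the sharper exact value $2k+\tfrac{9}{2}-2^{-k}(k+\tfrac32)$ rather than the paper's bound $(4k+9)/2$. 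Neither approach has any real advantage over the other; both are short and fully elementary.
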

Together, we have
\begin{align*}
\Ex[X^*_1-k]&=\frac 12(k-1)+\frac 12(k-3+2^{-k})-k=-2+2^{-k-1},\quad\text{and}\\
\Ex[(X^*_1-k)^2]&=\Ex[(X^*_1-k)^2\bbone_{Z=0}]+\Ex[(X^*_1-k)^2\bbone_{Z>0}]<\frac 12(2k+1)+\frac 12(4k+9)=3k+5.
\end{align*}
Next, we control $X^*_2$. Because $\PP[X=0]
=2^{-k}$, it is simple to check that $\Ex[\max(Z,1)\bbone_{X=0}]=\frac 32\cdot 2^{-k}$ and $\Ex[\max(Z,1)^2\bbone_{X=0}]=\frac 72\cdot 2^{-k}$, which are very small. The next claim bounds the $2C'$ term.
\begin{claim}\label{claim:C'bounds}(Proved in~\cref{section:numericalclaims}.) With $M,Z,C'$ as defined above, $\Ex[C']\leq 2(7/8)^{k'}$ and $\Ex[(C')^2]\leq 6(7/8)^{k'}$. 
\end{claim}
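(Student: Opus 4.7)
}
The plan is to exploit the fact that $C' = Z - C_3 = \max(Z - B, 0)$ where $B \sim \bin(M,1/2)$ is the $\bin(M,1/2)$ variable inside the $\min$ defining $C_3$; we may take $B$ conditionally independent of $Z$ given $M$. So the starting point will be to rewrite the event $\{C' = i\}$ for $i \geq 1$ as $\{Z = B + i\}$, which by conditioning on $B$ becomes
\[
\PP[C' = i] \;=\; \sum_{b \geq 0} \PP[B = b] \, \PP[Z = b+i] \;=\; 2^{-i-1}\,\Ex[2^{-B}],
\]
using that $\PP[Z = j] = 2^{-j-1}$ for $\geom(1/2)$ on $\N$.

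Next I would compute $\Ex[2^{-B}]$ using the binomial PGF: conditional on $M$, $\Ex[2^{-B} \mid M] = (\tfrac12 + \tfrac12 \cdot \tfrac12)^M = (3/4)^M$, so $\Ex[2^{-B}] = \Ex[(3/4)^M]$. Summing the geometric-like tails gives
\[
\Ex[C'] \;=\; \Ex[2^{-B}] \sum_{i \geq 1} i\,2^{-i-1} \;=\; \Ex[2^{-B}],
\qquad
\Ex[(C')^2] \;=\; \Ex[2^{-B}]\sum_{i \geq 1} i^2\,2^{-i-1} \;=\; 3\,\Ex[2^{-B}],
\]
so everything reduces to bounding $\Ex[(3/4)^M]$.

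Finally I would apply the PGF once more to the outer binomial $B' \sim \bin(k-1,1/2)$, noting $M = \max(B'-1,0)$. Writing
\[
\Ex[(3/4)^M] \;=\; \PP[B' = 0] + \tfrac{4}{3}\bigl(\Ex[(3/4)^{B'}] - \PP[B'=0]\bigr)
\;\leq\; \tfrac{4}{3}\,\Ex[(3/4)^{B'}]
\;=\; \tfrac{4}{3}\bigl(\tfrac{7}{8}\bigr)^{k-1}
\;\leq\; 2\bigl(\tfrac{7}{8}\bigr)^k,
\]
would yield $\Ex[C'] \leq 2(7/8)^k$ and $\Ex[(C')^2] \leq 6(7/8)^k$, matching the claim (reading the stray $k'$ as $k$). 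The calculation is entirely routine; the only mild subtlety is remembering to handle the atom $\{B' = 0\}$ produced by the $\max$ in the definition of $M$, which I absorb into the upper bound by discarding the negative correction term.
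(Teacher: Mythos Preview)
Your argument is correct and in fact slightly sharper than the paper's. Both proofs reduce to evaluating a binomial moment generating function at $1/2$, but the paths differ. The paper first replaces the compound variable $M'=\bin(M,1/2)$ by a single $M_1\sim\bin(k',1/4)$ via a stochastic-domination argument (absorbing the awkward $\max(\cdot-1,0)$ into a shift $Z\mapsto Z+1$), then computes the moments of the dominating $C_1'=\max(Z+1-M_1,0)$ exactly using memorylessness of $Z$. You instead compute $\PP[C'=i]$ directly as $2^{-i-1}\Ex[2^{-B}]$, which collapses the geometric sum and reduces everything to $\Ex[(3/4)^M]$ by an iterated PGF; the $\max(\cdot-1,0)$ is handled at the end by dropping a negative term. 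Your route avoids the auxiliary domination step entirely and yields the tighter constants $\Ex[C']\le\tfrac{4}{3}(7/8)^{k'}$ and $\Ex[(C')^2]\le 4(7/8)^{k'}$ before relaxing to the stated bounds. Note also that $k'=k-1$ in the paper, so your final inequality $\tfrac{4}{3}(7/8)^{k-1}\le 2(7/8)^k$ is stronger than needed; $\tfrac{4}{3}(7/8)^{k'}\le 2(7/8)^{k'}$ already suffices.
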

We conclude that $\Ex[X^*_2],\Ex[(X^*_2)^2]=O((7/8)^k)$. Therefore,
\[
\Ex[X^*-k]=\Ex[X^*_1-k]+\Ex[X^*_2]=-2+O((7/8)^k),
\]
and similarly by the Minkowski inequality,
\[
\Ex[(X^*-k)^2]\leq \Big(\sqrt{\Ex[(X^*_1-k)^2]}+\sqrt{\Ex[(X^*_2)^2]}\Big)^2=3k+5+o(1)
\]
This proves~\eqref{eq:algorithmstep2}. To obtain tail bounds~\eqref{eq:algorithmstep3} and~\eqref{eq:algorithmstep4}, first note that 
\begin{equation}\label{eq:T*expection}
T^*\leq T_3= (k+X)+(Z+1)+(X-1+Y),
\end{equation}
and thus $\Ex[T^*]\leq 4k$. By considering cases, we have
\[\PP[T^* \geq (4+5\eps)k]\leq \PP[X\geq (1+\eps)k]+\PP[Y\geq (1+2\eps)k\mid X<(1+\eps)k]+\PP[Z\geq \eps k]\]
By the definition of $Z$, we have $\PP[Z\geq \eps k]=2^{-\eps k}$. For the other two terms, recall that $X\sim \rm{NB}(k)$, and the conditional distribution $\big(Y\mid X< (1+\eps)k\big)$ is stochastically dominated by $\rm{NB}((1+\eps)k)$. Both terms can be bounded by Proposition \ref{prop:nbchernoff}, so 
\[
\PP[T^*\geq (4+5\eps)k]\leq 3e^{-\Omega(\eps^2k/(1+\eps))},
\]
which gives~\eqref{eq:algorithmstep3}. We obtain tail bounds on $X^*$ similarly. Using~\eqref{eq:X*expression}, we concentrate $X^*$ by bounding $X,Y$, and $Z$, noting that $|2C'-Z|\leq |Z|$:
\begin{multline*}
\PP[|X^*-k|\geq 3\eps k]\leq \PP[Z\geq \eps k]+\PP[|X-k-1|\geq \eps k]+
\PP\big[|Y-k|\geq 2\eps k\ \big|\  |X-k-1|<\eps k\big].
\end{multline*}
Verifying that each of these is bounded by $e^{-\Omega(\eps^2k/(1+\eps))}$, we conclude~\eqref{eq:algorithmstep4}.
\end{proof}

Now, we iterate this lemma as an algorithm to read an input word and output a short buffer. Let $s\in\{0,1\}^\omega$ be uniformly random and $(X_m)_{m\geq 0}$ and $(T_m)_{m\geq 0}$ be the sequence generated by repeated application of Lemma~\ref{lem:algorithm_step}. Specifically, let $X_0=T_0=0$, and then for each $m\in\N$, conditioned on $X_m=k$ there exists $X_{m+1}$ and $T_m^*$ satisfying Equations~\eqref{eq:algorithmstep1}-\eqref{eq:algorithmstep4} such that $0^{X_{m+1}}$ or $1^{X_{m+1}}$ is in $B_{T_m+T_m^*}$. Letting $T_{m+1}=T_m+T_m^*$, this process repeats indefinitely. Our next lemma bounds the Ces\`aro mean of some moment of $X_m$. 

\begin{lemma}\label{lem:boosted_algorithm} Let $(X_m)_{m\geq 0}$ and $(T_m)_{m\geq 0}$ be defined above. Then for all $\delta<1/3$ there exists $c>0$ such that $\sum_{m\leq n}\Ex[X_m^{1+\delta}]\leq cn$ for all $n\in \N$.
\end{lemma}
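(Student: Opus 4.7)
The plan is a Foster--Lyapunov drift analysis applied to the Markov chain $(X_m)_{m\ge 0}$. Take the Lyapunov function $V(k)\coloneqq (k+1)^{2+\delta}$ and aim to establish a one-step drift inequality
\begin{equation*}
\Ex\bigl[V(X_{m+1})-V(X_m)\,\big|\,X_m=k\bigr]\leq -c_1(k+1)^{1+\delta}+c_2
\end{equation*}
for all $k\ge 0$ and some constants $c_1,c_2>0$ depending only on $\delta$. Once this is in hand, summing telescopically from $m=0$ to $n-1$, combined with $V(X_0)=1$ and $V\ge 0$, gives $c_1\sum_{m=0}^{n-1}\Ex[(X_m+1)^{1+\delta}]\leq c_2n+1$, which implies the conclusion since $X_m^{1+\delta}\leq (X_m+1)^{1+\delta}$.

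To derive the drift inequality, write $\Delta\coloneqq X_{m+1}-k$. Equations~\eqref{eq:algorithmstep1} and~\eqref{eq:algorithmstep2} give $\Ex[\Delta\mid X_m=k]=-2+O((7/8)^k)$ and $\Ex[\Delta^2\mid X_m=k]\le 3k+O(1)$. Expanding $V$ to second order in $\Delta$,
\begin{equation*}
V(k+\Delta)-V(k)=(2+\delta)(k+1)^{1+\delta}\Delta+\tfrac{(1+\delta)(2+\delta)}{2}(k+1)^{\delta}\Delta^2+R(k,\Delta).
\end{equation*}
Taking expectations, the first two terms sum to
\begin{align*}
&-2(2+\delta)(k+1)^{1+\delta}+\tfrac{3(1+\delta)(2+\delta)}{2}(k+1)^{1+\delta}+O((k+1)^\delta)\\
&\qquad=\tfrac{(2+\delta)(3\delta-1)}{2}(k+1)^{1+\delta}+O((k+1)^\delta),
\end{align*}
whose leading coefficient $(2+\delta)(3\delta-1)/2$ is strictly negative precisely because $\delta<1/3$. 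This matching of the exponent threshold $1/3$ against the variance coefficient $3$ in~\eqref{eq:algorithmstep2} is the heart of the argument.

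The main obstacle is controlling $\Ex[R(k,\Delta)]$ so that it does not swamp the leading term. Split the expectation into the regions $|\Delta|\le k/2$ and $|\Delta|>k/2$. On the first region, $|V'''(\xi)|=O((k+1)^{\delta-1})$ for all relevant $\xi\geq (k+1)/2$, so $|R|=O((k+1)^{\delta-1}|\Delta|^3)$; meanwhile, the sub-Gaussian portion of the tail bound~\eqref{eq:algorithmstep4} gives $\Ex[|\Delta|^3\bbone_{|\Delta|\le k/2}]=O(k^{3/2})$, so the contribution is $O((k+1)^{\delta+1/2})=o((k+1)^{1+\delta})$. On the second region, bound $V(k+\Delta)\le (k+|\Delta|+1)^{2+\delta}$ and integrate~\eqref{eq:algorithmstep4} dyadically in $|\Delta|$; the sub-exponential tails produce an $e^{-\Omega(k)}$ prefactor that dominates any polynomial factor, making this contribution negligible. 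Combining these estimates yields the drift inequality, and the telescoping argument above completes the proof.
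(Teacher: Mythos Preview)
Your proposal is correct and follows essentially the same approach as the paper: a Foster--Lyapunov drift argument with Lyapunov function $k\mapsto k^{2+\delta}$ (the paper uses $k^{2+\delta}$ rather than $(k+1)^{2+\delta}$, a cosmetic difference), the same second-order Taylor expansion yielding the coefficient $\tfrac{(2+\delta)(3\delta-1)}{2}$, and the same remainder control via the split $|\Delta|\lessgtr k/2$ using~\eqref{eq:algorithmstep4}. The paper packages the Taylor-expansion step as a separate claim (Claim~\ref{claim:secondorderapproxBoostedsequence}), but the substance is identical.
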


As an aside, note that $(X_m)_m$ is a time-homogeneous, irreducible, aperiodic Markov chain on $\N$. We expect that more careful analysis could improve this to a bound uniform in $m$, though this would not affect our final result.

\begin{proof} For this argument, we need a second order approximation, applying the results of Lemma~\ref{lem:algorithm_step}.
\begin{claim}\label{claim:secondorderapproxBoostedsequence} (Proved in~\cref{section:numericalclaims}.) Let $\Delta$ have the conditional distribution of $(X_m-X_{m-1}\mid X_{m-1}=k)$, with $(X_m)_m$ as defined above. For $p>0$, we have
\[
\Ex[(k+\Delta)^p-k^p]=k^{p-1}\Big(\frac p2(3p-7)+O(k^{-1/2})\Big).
\]
\end{claim}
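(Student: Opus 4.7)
The plan is to Taylor expand $(k+\Delta)^p - k^p$ to second order about $\Delta = 0$ and take expectations, using the moment and tail estimates for $\Delta$ already established in~\cref{lem:algorithm_step}. Since $\Delta$ has the conditional distribution of $X^*-k$ given $1^k\in B_{t_0}$, the bounds~\eqref{eq:algorithmstep1} and~\eqref{eq:algorithmstep2} give $\Ex[\Delta] = -2 + O((7/8)^k)$ and $\Ex[\Delta^2] \leq 3k + O(1)$, while~\eqref{eq:algorithmstep4} yields the sub-Gaussian-type tail bound $\PP[|\Delta|\geq t]\leq \exp(-\Omega(t^2/(k+t)))$ after setting $\eps = t/k$.

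On the event $\{|\Delta|\leq k/2\}$, Taylor's theorem with Lagrange remainder gives
\[
(k+\Delta)^p - k^p = p\,k^{p-1}\Delta + \tfrac{p(p-1)}{2}\,k^{p-2}\Delta^2 + O_p\!\bigl(k^{p-3}|\Delta|^3\bigr).
\]
Taking expectations, the linear term contributes $-2p\,k^{p-1}$ and the quadratic term contributes $\tfrac{3p(p-1)}{2}k^{p-1}$, up to lower-order corrections. Combining these algebraically yields $k^{p-1}\cdot \tfrac{p(3p-7)}{2}$, which is the claimed main term. The cubic remainder is $O_p(k^{p-3}\Ex[|\Delta|^3])$; using the tail bound and a layer-cake integration $\Ex[|\Delta|^3]=\int_0^\infty 3t^2\PP[|\Delta|>t]\,dt$ split at $t=k$, one obtains $\Ex[|\Delta|^3]=O(k^{3/2})$, so the cubic remainder contributes $O(k^{p-3/2})=k^{p-1}\cdot O(k^{-1/2})$, as desired.

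It remains to handle the tail event $\{|\Delta|>k/2\}$, where the Taylor expansion is no longer valid. Here I would use the crude bound $(k+|\Delta|)^p\leq (3|\Delta|)^p$ together with the same tail estimate to show that the contribution of this event to $\Ex[(k+\Delta)^p-k^p]$ is $e^{-\Omega(k)}$, which is negligible compared to $k^{p-1}\cdot O(k^{-1/2})$. The main obstacle is careful bookkeeping of the third-moment estimate: one must distinguish the Gaussian regime $t\lesssim k$ from the exponential regime $t\gtrsim k$ in the tail bound, verifying that the latter contributes only exponentially small amounts to $\Ex[|\Delta|^3]$. Since $p$ is a fixed positive constant in the applications of this claim (\cref{lem:boundedbuffer} uses $p=1+\delta$ with $\delta<1/3$), all implicit constants in the Taylor remainder depend only on $p$ and cause no further difficulty.
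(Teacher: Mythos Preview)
Your proposal is correct and follows essentially the same approach as the paper: both Taylor expand to second order, split according to whether $|\Delta|\le k/2$, use the tail bound~\eqref{eq:algorithmstep4} together with a layer-cake integral to get $\Ex[|\Delta|^3]=O(k^{3/2})$ and to show the large-$|\Delta|$ contribution is $e^{-\Omega(k)}$, and then substitute~\eqref{eq:algorithmstep1}--\eqref{eq:algorithmstep2} for the main terms. One minor slip in your final parenthetical: the claim is applied in \cref{lem:boosted_algorithm} with $p=2+\delta$, not $1+\delta$.
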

\noindent Applying Claim~\ref{claim:secondorderapproxBoostedsequence} with $p=2+\delta$ where $\delta\in (0,1/3)$, we have
\[
\Ex[X_m^p-X_{m-1}^p\mid X_{m-1}=k]=k^{1+\delta}\Big(\frac {2+\delta}2(3\delta-1)+O(k^{-1/2})\Big)\leq c_1-c_2k^{1+\delta},
\]
for some constants $c_1,c_2>0$, depending on $\delta$. Taking expectation over $X_{m-1}$, we have
\[
\Ex[X_m^{2+\delta}]\leq \Ex[X_{m_1}^{2+\delta}]+c_1-c_2\Ex[X_{m-1}^{1+\delta}].
\]
Summing over all $m\leq n$ gives the bound $0\leq \Ex[X_{n+1}^{2+\delta}]\leq \sum_{m\leq n}(c_1-c_2\Ex[X_{m}^{1+\delta}])$, and therefore $\sum_{m\leq n}\Ex[X_m^{1+\delta}]\leq c_3n$, where $c_3=c_1/c_2$.
\end{proof}

Using this moment bound, we restate and prove the final lemma required for the buffer analysis.

\begin{numlemma}{\ref{lem:boundedbuffer}}
For a uniformly random word $s\in \{0,1\}^{\omega}$, let $Y_t=\min\{|w|\mid w\in \Sigma_2\cap B_t(s)\}$. Then for all $n\in \N$, and $\delta<1/3$, $\sum_{t\leq n}\Ex[Y_{t}^\delta]=O(n)$.
\end{numlemma}
\begin{proof}
Let $(X_m)_{m=0}^\infty$ and $(T_m)_m$ be defined as above, by repeated application of Lemma~\ref{lem:algorithm_step}. By definition, for all $m\in \N$, we have $Y_{T_m}\leq X_m$. Note that $(T_m)_m$ is strictly increasing, so we can define the random variable $M(t)=\max\{m\in \N\mid T_m\leq t\}$. Note that $(Y_t)_t$ is Lipschitz, so
\[
Y_t\leq X_{M(t)}+(t-T_{M(t)})\leq X_{M(t)}+T^*_{M(t)}.
\]
where we recall $T^*_m\coloneqq T_{m+1}-T_m$. We consider these terms separately, using the rough bound that $M(t)\leq t$ for all $t\in \N$. First, we obtain the following by double counting, since $T_m^*=|\{t\mid M(t)=m\}|$:
\[
\sum_{t\leq n}X_{M(t)}\leq\sum_{m\leq M(n)} X_mT^*_m\leq \sum_{m\leq n}X_mT^*_m.
\]
Taking expectations, for any $0<\delta<1/3$ we have
\begin{align}
\sum_{t\leq n}\Ex[X_{M(t)}^\delta]&\leq \sum_{m\leq n}\sum_{k\geq 1}k^\delta\PP[X_m=k]\Ex[T^*_m\mid X_m=k]\label{eq:XMboundedess}\\
&\leq \sum_{m\leq n}\sum_{k\geq 1}k^{\delta}\PP[X_m=k]\cdot Ck\notag\\
&=C\sum_{m\leq n}\Ex[X_m^{1+\delta}],\notag
\end{align}
for some absolute constant $C$ ($C=4$ suffices by \eqref{eq:T*expection} in the proof of \cref{lem:algorithm_step}).
It is similar to bound $T^*_{M(t)}$. Indeed by double counting, we have
\begin{align}\label{eq:TMboundedness}
\sum_{t\leq n}\Ex[(T^*_{M(t)})^\delta]
&\leq \sum_{m\leq n}\Ex[{T_m^*}^{1+\delta}]
=\sum_{m\leq n}\Ex[\Ex[{T_m^*}^{1+\delta}\mid X_m]]
\leq C'\sum_{m\leq n}\Ex[X_m^{1+\delta}],
\end{align}
where the tail bound in~\eqref{eq:algorithmstep3} is used to obtain $\Ex[(T_m^*)^{1+\delta}\mid X_m]\leq C'X_m^{1+\delta}$ for an absolute constant $C'$. Finally, by Lemma \ref{lem:boosted_algorithm}, and using the concave inequality $(x+y)^\delta\leq x^\delta+y^\delta$, we have
\[
\sum_{t\leq n}\Ex[Y_t^\delta]\leq \sum_{t\leq n}\big(\Ex[X_{M(t)}^\delta]+\Ex[(T_{M(t)}^*)^\delta]\big)
\leq (C+C')\sum_{m\leq n}\Ex[X_m^{1+\delta}]\leq C''n,
\]
for some constant $C''$. This completes the proof.
\end{proof}

\subsection{Proofs of technical claims}\label{section:numericalclaims}
\begin{proof}[Proof of Claim~\ref{claim:Ybounds}] Recall that $X\sim \NB(k)$, $(Y\mid X)\sim \NB(\max(X-1,0))$ and $Z\sim \geom(1/2)$, and also $Y,Z$ are independent. First, we have
\[
\Ex[Y]=\Ex[\Ex[Y\mid X]]=\Ex[\max(X-1,0)]=\Ex[X-1+\bbone_{X=0}]=k-1+2^{-k}.
\]
Therefore $\Ex[(Y-Z)\bbone_{Z>0}]=\Ex[Y\bbone_{Z>0}]-\Ex[Z]=\frac 12(k-1+2^{-k})-1$. Next, using the law of total variance,
\begin{align*}
\var[Y]=\Ex[\var[Y\mid X]]+\var[\Ex[Y\mid X]]&=\Ex[2\max(X-1,0)]+\var[\max(X-1,0)]\\
&\leq 2k-2+2^{1-k}+\var[X]\\
&=4k-2+2^{1-k}.
\end{align*}
The memorylessness of the geometric distribution implies that conditioned on $Z > 0$, $Z'=Z-1$ is also a $\geom(1/2)$. Thus,
\begin{align*}
\Ex[(Y-Z-k)^2\bbone_{Z>0}]\cdot 2&=\Ex_{Z'\sim \geom(1/2)}[(Y-Z'-k-1)^2]\\
&\leq \var[Y]+\var[Z']+(\Ex[Y-Z']-k-1)^2\\
&=(4k-2+2^{1-k})+(2)+(2^{-k}-3)^2\\
&< 4k+9,
\end{align*}
giving the desired bounds.
\end{proof}
\begin{proof}[Proof of Claim~\ref{claim:C'bounds}]
Let $k'=k-1$. Recall $M\sim \max(\bin(k',1/2)-1,0)$ and $Z\sim \geom(1/2)$ are independent and $C'$ has conditional distribution $(C'\mid M,Z)\sim \max(Z-\bin(M,1/2),0)$. Define the random variable $M'$ with distribution $(M'\mid M)\sim \bin(M,1/2)$ such that $C'=\max(Z-M',0)$. 

Up to the unpleasantly interleaved $-1$ and max terms, $M' \approx \bin(k', 1/4)$. To justify this approximation, we let $M_1\sim \bin(k',1/4)$ and $M_1'=\max(M_1-1,0)$, and observe that $M'$ stochastically dominates $M_1'$. Let 
\[
C_1'=\max(Z+1-M_1,0)\geq \max(Z-M_1',0),
\]
which stochastically dominates $C'$. Now, we compute $\Ex[C'_1]$ and $\Ex[(C'_1)^2]$ exactly. We have
\begin{align*}
\Ex[C_1']&=\sum_{j=0}^{k'}\PP[M_1=j]\PP[Z+1\geq j+1]\Ex[Z+1-j\mid Z+1\geq j+1]\\
&=\sum_{j=0}^{k'}\PP[M_1=j]2^{-j}(1+\Ex[Z])=2\Ex[2^{-M_1}],
\end{align*}
where the conditional expectation follows from the fact that $Z$ is memoryless. Recall the moment generating function for the binomial distribution $M_1\sim\bin(k', 1/4)$ is $\Ex[e^{tM_1}]=((1-p)+pe^t)^{k'}$, where $p=1/4$. Taking $e^t = \frac12$, we conclude that $\Ex[C'_1]=2(7/8)^{k'}$. Similarly, we compute
\[\Ex[(C'_1)^2]=\Ex[\Ex[\max((Z+1-M_1)^2,0)\mid M_1]]=\Ex[3\cdot 2^{1-M_1}]=6\Ex[2^{-M_1}]=6(7/8)^{k'}.
\]
By stochastic dominance, this gives upper bounds for $\Ex[C']$ and $\Ex[C'^2]$.
\end{proof}

\begin{proof}[Proof of Claim~\ref{claim:secondorderapproxBoostedsequence}]
Define the remainder $R=R(\Delta)\coloneqq (k+\Delta)^p-k^p-pk^{p-1}\Delta-\frac{p(p-1)}{2}k^{p-2}\Delta^2$. Recall that by construction $k+\Delta\geq 0$ deterministically. To bound this remainder, we split into two cases:
\[
\Ex[|R|]=\Ex[|R|\bbone_{|\Delta|<k/2}]+\Ex[|R|\bbone_{|\Delta|\geq k/2}].
\]
Recall that by Lemma~\ref{lem:algorithm_step}, we have $\PP[|\Delta|\geq x]\leq e^{-\Omega(x^2/k(1+x/k))}$. For $x\geq k/2$, the exponent is asymptotically $-\Omega(x)$, and for $x\leq k/2$, the exponent is $-\Omega(x^2/k)$. If $|\Delta|\leq k/2$, Taylor's theorem gives $|R(\Delta)|\leq c_1k^{p-3}|\Delta|^3$ for a constant $c_1$ depending on $p$. Using the tail integral representation, we have
\[
\Ex[|\Delta|^3]=\int_0^{\infty} 3x^2\PP[|\Delta|\geq x]\rm dx\leq \int_0^{\infty}x^2e^{-\Omega(x^2/k)}\rm dx=k^{3/2}\int_0^ku^{2}e^{-\Omega(u^2)}\rm dy=O(k^{3/2}),
\]
where we substitute $u=x/\sqrt k$ so $\rm dx=\sqrt k\rm du$. Therefore $\Ex[|R|\bbone_{|\Delta|< k/2}]\leq O(k^{p-3}k^{3/2})$. For the other term, note that when $|\Delta|\geq k/2$ we have $|R|\leq c_2|\Delta|^{c_3}$ for some constants $c_2,c_3$. Thus we have 
\begin{align*}
\Ex[|R|\bbone_{|\Delta|\geq k/2}]&\leq c_1\Ex[|\Delta|^{c_2}\bbone_{|\Delta|\geq k/2}]\\
&=c_1\int_{0}^\infty c_2x^{c_2-1}\PP\big[|\Delta|\bbone_{|\Delta|\geq k/2}\geq x\big]\rm dx\\
&=c_1\int_{0}^{k/2} c_2x^{c_2-1}\PP[|\Delta|\geq k/2]\rm dx+c_1\int_{k/2}^\infty c_2x^{c_2-1}\PP[|\Delta|\geq x]\rm dx\\
&= O(k^{c_2}e^{-\Omega(k)}) + \int_{k/2}^\infty O(x^{c_2-1})e^{-\Omega(x)}\rm dx\\
&= e^{-\Omega(k)}.
\end{align*}
so combining terms $\Ex[|R|]\leq O(k^{p-3/2})$. Recall from Lemma~\ref{lem:algorithm_step} that $\Ex[\Delta]=-2+O((7/8)^k)$ and $\Ex[\Delta^2]\leq 3k+O(1)$. Then we calculate
\begin{align*}
\Ex[(k-\Delta^p)-k^p]&=\Ex[R]+k^{p-1}\Big(p\Ex[\Delta]+\frac{p(p-1)}{2}k^{-1}\Ex[\Delta^2]\Big)\\
&=\Ex[R]+k^{p-1}\frac p2\Big(-2p+\frac{p(p-1)}{2}3+O(k^{-1})\Big)\\
&=k^{p-1}\Big(\frac p2\big(3p-7\big)+O(k^{-1/2})\Big).
\end{align*}
This gives the desired approximation.
\end{proof}

\section{Concluding Remarks: Larger Alphabets }\label{section:conclusion}

Recall that $\cS_k(n)$ is the set of shuffle squares in $[k]^{2n}$; less is known for alphabet size $k>2$. Observe that $|\cS_k(n)|$ is supermultiplicative in $n$, so the limit $b_k=\lim_{n\to\infty}|\cS_k(n)|^{1/2n}$ exists, and we can write $|\cS_k(n)|=(b_k)^{2n+o(n)}$.  The following upper bound is implied by the proofs of Bukh and Zhou.

\begin{theorem}[{\cite[Theorem 4]{bukhzhou}}]\label{thm:bukhzhou} For $k\geq 1$, $|\cS_k(n)|\leq (\sqrt k+\sqrt{k-1})^{2n+o(n)}<(4k-2-\frac{1}{4k})^{n+o(n)}$.
\end{theorem}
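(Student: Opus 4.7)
The plan is to generalize the buffer-walk machinery from Section~\ref{section:preliminaries} to $k$-ary alphabets and use a transfer-matrix (spectral) argument. For $s\in[k]^{2n}$, extend the buffer definition to $[k]^*$: $w\in B_t(s)$ iff there is a partition $(A_1,A_2)$ of $[1,t]$ with $s(A_1)=s(A_2)\circ w$. Then $s\in\cS_k(n)$ iff $\emptyword\in B_{2n}(s)$, equivalently iff there exists a buffer thread $(w_0,\ldots,w_{2n})$ with $w_0=w_{2n}=\emptyword$ where each step is either an \emph{extend} (append $s(t)\in[k]$) or a \emph{tail} (forced whenever $s(t)$ matches the front letter of $w_{t-1}$). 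Since a thread uniquely determines $s$ (the letter at each extend is recorded and each tail letter is forced), the naive bound is $|\cS_k(n)|\le\#\{\text{threads from }\emptyword\text{ to }\emptyword\}=k^n\,\mathrm{Cat}_n\sim(4k)^n/n^{3/2}$, since move sequences are Dyck paths and each of the $n$ extend steps carries a $k$-ary letter choice.

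To sharpen this to $(\sqrt k+\sqrt{k-1})^{2n+o(n)}$, I would associate each shuffle square with a \emph{canonical} buffer thread (for example, the thread arising from the lex-minimum valid shuffle pattern $\sigma\in\{0,1\}^{2n}$), and then bound the canonical threads by a transfer-matrix computation. The target eigenvalue $\sqrt k+\sqrt{k-1}$ is the Perron eigenvalue of $T=\spmat{\sqrt k & 1\\ k-1 & \sqrt k}$, which suggests a two-state encoding whose states record (say) the parity of the buffer length, or the identity of the ``last move type,'' together with the buffer-front letter. The weights $k-1$, $1$, $k$ arise naturally (forced extends from non-empty buffers, forced tails, and the first step from $\emptyword$ respectively), and the symmetric $\sqrt k$ diagonal entries come from a geometric-mean regrouping: the $k$-vs-$(k-1)$ asymmetry in the buffer walk collapses to the scalar $\sqrt{k(k-1)}$ once the off-diagonal entries are balanced, giving a symmetric Jacobi matrix on $\N$ (indexed by buffer length) whose spectrum lies in $[-(\sqrt k+\sqrt{k-1}),\sqrt k+\sqrt{k-1}]$.

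The second inequality $(\sqrt k+\sqrt{k-1})^2<4k-2-1/(4k)$ is elementary: we have $(\sqrt k+\sqrt{k-1})^2=2k-1+2\sqrt{k(k-1)}=2k-1+\sqrt{(2k-1)^2-1}$, and the bound $\sqrt{x^2-1}<x-1/(2x)$ (valid for all $x\ge1$) applied at $x=2k-1$ gives $(\sqrt k+\sqrt{k-1})^2<4k-2-1/(2(2k-1))\le 4k-2-1/(4k)$.

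The main obstacle is the canonical-encoding step: one must define a map $s\mapsto\text{canonical thread}(s)$ that is injective on $\cS_k(n)$ and whose image is exactly counted by the eigenvalue-$(\sqrt k+\sqrt{k-1})$ transfer matrix, rather than by the larger Catalan-times-$k^n$ count. The lex-minimum partition is a natural candidate, but showing that lex-minimum threads avoid the overcounting inherent in the Dyck-path decomposition --- and that the residual count fits into the two-state spectral framework --- requires a careful structural analysis of ``forced'' vs.\ ``ambiguous'' transitions (i.e., steps where $s(t)$ equals the buffer's front letter and one could a priori choose either move). This is the crux of the Bukh--Zhou-style argument, and a rigorous implementation would likely proceed by factoring the generating function for the buffer process into a product indexed by the boundary behavior at $\emptyword$ and the bulk behavior controlled by the Jacobi matrix above.
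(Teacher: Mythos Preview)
The paper does not prove \cref{thm:bukhzhou}; it is quoted from Bukh--Zhou and only the sentence ``The following upper bound is implied by the proofs of Bukh and Zhou'' accompanies it. So there is no in-paper proof to compare against, and your task reduces to whether your sketch stands on its own.

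It does not. You correctly set up the $k$-ary buffer threads and the naive $k^n\mathrm{Cat}_n\sim(4k)^n$ count, and your verification of the elementary inequality $(\sqrt k+\sqrt{k-1})^2<4k-2-\tfrac{1}{4k}$ via $\sqrt{x^2-1}<x-\tfrac{1}{2x}$ at $x=2k-1$ is clean and correct. But the heart of the bound---getting from $4k$ down to $(\sqrt k+\sqrt{k-1})^2$---is exactly the step you label ``the main obstacle'' and then do not carry out. Writing down a $2\times 2$ matrix whose Perron eigenvalue happens to equal $\sqrt k+\sqrt{k-1}$ is not an argument: the entries $\sqrt k$ on the diagonal have no combinatorial meaning (you cannot have $\sqrt k$ choices at a step), and the phrase ``geometric-mean regrouping'' is doing all the work without any mechanism behind it. Likewise, the claim that the lex-minimal shuffle pattern yields threads counted by this particular transfer matrix is asserted, not shown; you would need to identify precisely which local configurations are forbidden for a canonical thread and then verify that the resulting constrained walk has growth rate $(\sqrt k+\sqrt{k-1})^{2n}$, and you have done neither.

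In short: the framework (canonical thread plus transfer matrix) is a reasonable \emph{plan}, and it is in the spirit of how such bounds are typically obtained, but as written it is a description of where the difficulty lies rather than a resolution of it. The proposal is a proof outline with its central lemma missing.
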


This implies that for fixed $k\geq 4$ and $n\to\infty$, since $\sqrt k+\sqrt{k-1}<k$, an exponentially small fraction of $k$-ary words are shuffle squares. For large fixed alphabets, we conjecture that this bound is essentially tight. We note that the asymptotics of $|\cS_k(n)|$ for fixed $n$, as $k\to\infty$, were resolved by the first author, Huang, Nam, and Thaper~\cite{HHNT}.

\begin{conjecture} As a function of $k\in\N$, we have $\lim_{n\to\infty}|\cS_k(n)|^{1/n}=4k-2-o(1)$.
\end{conjecture}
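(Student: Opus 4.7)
My plan is to prove the matching lower bound $|\cS_k(n)| \geq (4k-2-o_k(1))^n$, since the Bukh--Zhou bound (\cref{thm:bukhzhou}) already gives the matching upper bound $|\cS_k(n)|^{1/n} \leq 4k - 2 - \frac{1}{4k}$. The vehicle is a second-moment argument on the natural surjection from shuffle decompositions onto shuffle squares. For each $s \in [k]^{2n}$, let $N(s)$ denote the number of ordered pairs $(w,\sigma)$ with $w \in [k]^n$ and $\sigma \in \binom{[2n]}{n}$ such that $s|_\sigma = s|_{[2n]\setminus\sigma} = w$. Since $\sum_s N(s) = k^n \binom{2n}{n}$, Cauchy--Schwarz yields
\[
|\cS_k(n)| \geq \frac{k^{2n}\binom{2n}{n}^2}{\sum_s N(s)^2}.
\]
The plan then reduces to proving the second-moment bound $\sum_s N(s)^2 \leq (4k-2+o_k(1))^n \binom{2n}{n}^2$, which would yield exactly $|\cS_k(n)| \geq (4k-2-o_k(1))^n$.

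The next step is to analyze $\sum_s N(s)^2$ combinatorially. Expanded, this quantity counts $4$-tuples $(w_1,\sigma_1,w_2,\sigma_2)$ producing the same word $s$. Conditioning on $(\sigma_1,\sigma_2)$, the partition $[2n] = A_{00} \sqcup A_{01} \sqcup A_{10} \sqcup A_{11}$, indexed by membership in each $\sigma_i$, determines a system of equalities among letters of $s$, naturally encoded as a graph $G(\sigma_1,\sigma_2)$ whose connected components capture the free coordinates; the number of valid $s$ is then $k^{c(G)}$. To convert this into a tractable growth rate, read positions $1, 2, \ldots, 2n$ in order and maintain a Markov state encoding the ``matching fronts'' of both decompositions simultaneously, generalizing the buffer formalism of \cref{section:preliminaries}. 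The sum becomes a product of transfer matrices, and identifying the dominant eigenvalue should yield the desired bound. I expect the dominant eigenvalue to match the Bukh--Zhou exponent to leading order: the ``$-2$'' correction off $4k$ reflects the forced letter coincidences from the $A_{11}$ class.

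The main obstacle lies in controlling the ``bad events'' where $N(s)$ blows up combinatorially. Long runs of a single letter contribute disproportionately: a maximal run of length $\ell$ admits roughly $\binom{\ell}{\ell/2}$ intra-run shuffle decompositions, so shuffle squares whose letter statistics deviate from uniform carry an exponential overcounting penalty. This is reminiscent of the clustering phenomena and second-moment frustration familiar from random constraint satisfaction problems. I would address this by either (a) restricting to a slightly weighted measure on shuffle squares with approximately balanced letter frequencies and run lengths (a planting or small-subgraph-conditioning argument), or (b) pursuing an explicit constructive lower bound via a $k$-ary extension of the boosted greedy algorithm of \cref{section:boostedgreedy}, which is the route sketched in \cref{section:conclusion}. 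The hardest part is obtaining the precise constant $4k-2$ rather than a weaker $Ck$ bound: this requires the second-moment computation to be tight up to $k^{o(n)}$ factors, a delicate spectral calculation that will likely need a fine decomposition of the transfer-matrix spectrum by overlap density.
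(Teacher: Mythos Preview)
The statement you are attempting to prove is a \emph{conjecture} in the paper, not a theorem: the paper does not claim a proof. The only evidence the paper offers is \cref{prop:karyboosted}, a boosted greedy algorithm for $k$-ary words yielding the weaker lower bound $\lim_{n\to\infty}|\cS_k(n)|^{1/n}\ge 4k-4+2\log 2+O(1/k)$, which falls short of $4k-2-o(1)$ by a constant. So there is no ``paper's own proof'' to compare against; your proposal is an attack on an open problem.

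As a proof, what you have written is a plan with its central step missing. The Cauchy--Schwarz reduction to the second moment $\sum_s N(s)^2$ is standard and correct, and the combinatorial interpretation via the overlap graph $G(\sigma_1,\sigma_2)$ is the natural one. But you yourself identify the crux: establishing $\sum_s N(s)^2\le (4k-2+o_k(1))^n\binom{2n}{n}^2$ requires the transfer-matrix spectrum to be tight to the Bukh--Zhou exponent, and you give no argument for this, only the remark that it ``will likely need a fine decomposition of the transfer-matrix spectrum by overlap density.'' The long-run blowup you flag is a genuine obstruction to the vanilla second moment: words with a run of length $\ell$ pick up a multiplicative $\binom{\ell}{\lfloor\ell/2\rfloor}$ in $N(s)$, and since a typical $s$ over $[k]$ has $\Theta(n/k)$ runs of each length, the contribution of these is exponential in $n$ and must be matched exactly by the target bound. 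Neither of your suggested fixes (weighted planting, or the $k$-ary boosted greedy of \cref{section:conclusion}) is carried out, and the latter is precisely what the paper does to get only $4k-2.61\ldots$. Until the spectral calculation or a conditioning argument is actually executed, this remains a proposal, not a proof.
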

We give evidence for this conjecture in the following section. Recently, Basu and Ruci\'nski \cite{basu} improved the upper bound for worst-case twins in ternary words, but the average case behavior is wide open.

\begin{problem}\label{prob:ternary} Is it true that $|\cS_3(n)|=(9-o(1))^n$? 
\end{problem}
\noindent This problem is closely related to the extremal problem of determining whether $\rm{LT}(3,n)=(\frac 12-o(1))n$. The following one directional implication follows from work of Bukh and Zhou.
\begin{theorem}[{\cite[Theorem 4]{bukhzhou}}]\label{prop:alphatwins} If $\cS_k(n)\leq (b_k)^{2n+o(n)}$ and there exists $\alpha_k\in (2/k,1)$ such that
\[
\alpha_k\log\Big(\frac{b_k}{\alpha_kk}\Big)+(1-\alpha_k)\log \Big(\frac{k-1}{k(1-\alpha_k)}\Big)<0,
\]
then $\rm{LT}(k,n)< \frac 12\alpha_kn$ for sufficiently large $n$. In particular, if $b_k<k$, then we can take $\alpha_k<1$.
\end{theorem}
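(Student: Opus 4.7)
The plan is a union-bound counting argument showing that the number of $s\in [k]^n$ with $\rm{LT}(s)\geq m$ is strictly less than $k^n$, where $m=\lceil \alpha_k n/2\rceil$; this forces the existence of some $s\in[k]^n$ with $\rm{LT}(s)<m$, i.e.\ $\rm{LT}(k,n)<\alpha_k n/2$. The first observation is that $\rm{LT}(s)\geq m$ holds if and only if $s$ contains some word $w\in \cS_k(m)$ as a (length-$2m$) subsequence: the ``$\Rightarrow$'' direction extracts the letters on $A_1\cup A_2$, while ``$\Leftarrow$'' uses the shuffle-square partition of the embedded subword. So by the union bound,
\[|\{s:\rm{LT}(s)\geq m\}| \;\leq\; \sum_{w\in \cS_k(m)} g(w,n), \qquad g(w,n):=|\{s\in[k]^n: s\supseteq w\}|.\]

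The heart of the argument is bounding $g(w,n)$ using the canonical \emph{rightmost embedding} of $w$ into $s$: recursively set $p_{2m}$ to be the rightmost position in $s$ equal to $w(2m)$, then $p_{2m-1}$ to be the rightmost position in $s[1,p_{2m}-1]$ equal to $w(2m-1)$, and so on down to $p_1$. Each $s\supseteq w$ produces a unique rightmost embedding $(p_1,\ldots,p_{2m})$, and the canonical choice forces nontrivial constraints on the remaining letters: for every $i\notin\{p_1,\ldots,p_{2m}\}$ with $p_j<i<p_{j+1}$ (or $i>p_{2m}$), we must have $s(i)\neq w(j)$ (resp.\ $s(i)\neq w(2m)$), since otherwise $p_j$ could have been pushed further right. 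The positions $i<p_1$ remain unconstrained. Summing over embeddings,
\[g(w,n) \;=\; \sum_{1\leq p_1<\cdots<p_{2m}\leq n} k^{p_1-1}\,(k-1)^{n-p_1-2m+1},\]
which depends only on $|w|=2m$. A Laplace-type estimate (differentiating the log-summand in $\gamma=p_1/n$) gives the derivative $n\log\bigl((k(1-\gamma-\alpha_k))/((k-1)(1-\gamma))\bigr)$, which is negative throughout $\gamma\in[0,1-\alpha_k]$ exactly when $\alpha_k>1/k$. Under our hypothesis $\alpha_k>2/k$, the maximum of the summand is thus at $p_1=1$, yielding $g(w,n)\leq n\binom{n-1}{2m-1}(k-1)^{n-2m}=\binom{n}{2m}(k-1)^{n-2m}\cdot e^{o(n)}$.

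Combining with the hypothesis $|\cS_k(m)|\leq b_k^{2m+o(m)}$ and $\binom{n}{2m}=2^{nH(\alpha_k)+o(n)}$, one obtains
\[|\{s:\rm{LT}(s)\geq m\}| \;\leq\; b_k^{\alpha_k n+o(n)}\cdot 2^{nH(\alpha_k)}\cdot (k-1)^{(1-\alpha_k)n}\cdot e^{o(n)}.\]
This is strictly less than $k^n$ for $n$ large whenever $\alpha_k\log b_k + H(\alpha_k)+(1-\alpha_k)\log(k-1)<\log k$, and splitting $\log k = \alpha_k\log k+(1-\alpha_k)\log k$ and collecting terms rewrites the inequality as the hypothesis $\alpha_k\log(b_k/(\alpha_k k)) + (1-\alpha_k)\log((k-1)/(k(1-\alpha_k)))<0$, completing the main argument. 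For the ``in particular'' clause, observe that as $\alpha\to 1^-$ the left-hand side tends to $\log(b_k/k)$ (using $(1-\alpha)\log(1/(1-\alpha))\to 0$ and $(1-\alpha)\log(k-1)\to 0$), which is negative precisely when $b_k<k$; by continuity some $\alpha_k<1$ then already satisfies the condition.

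The main obstacle is the Laplace bound for $g(w,n)$: one must verify rigorously that the log-summand is strictly decreasing in $\gamma$ on $[0,1-\alpha_k]$ (so the optimum sits at the boundary $p_1=1$) and that subexponential error terms from the Stirling approximation to $\binom{n-p_1}{2m-1}$ are genuinely absorbed into the $e^{o(n)}$ slack. This is exactly where the assumption $\alpha_k>2/k$ is used in an essential way: if $\alpha_k\leq 1/k$ the interior critical point $\gamma^*=1-k\alpha_k\in[0,1-\alpha_k]$ dominates and the clean $(k-1)^{n-2m}$ exponent degrades into a strictly larger bound, breaking the match with the target inequality.
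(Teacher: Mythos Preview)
Your argument is correct and matches the paper's own approach exactly: the paper gives no proof beyond the single sentence ``This can be obtained by a union bound over all `leftmost' embeddings of shuffle squares in a random word $s$,'' and your rightmost-embedding union bound is precisely that argument (up to reversing the word). One cosmetic slip: your displayed formula for $g(w,n)$ suppresses the factor $\binom{n-p_1}{2m-1}$ counting the positions $p_2,\ldots,p_{2m}$, though you clearly use it in the subsequent Laplace estimate; also, your own monotonicity computation shows the threshold is $\alpha_k>1/k$ rather than $2/k$, so the final paragraph slightly overstates where the hypothesis bites.
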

\noindent This can be obtained by a union bound over all ``leftmost" embeddings of shuffle squares in a random word $s$. Though we do not offer a conjecture on Problem~\ref{prob:ternary}, numerical evidence suggests that $|\cS_3(n)|=o(9^n)$.

Another problem of combinatorial interest is the behavior of the greedy probabilities $q_t(w)$ as defined in Section \ref{section:greedyalgorithm}, and in particular the limit $c(w)\coloneqq \lim_{t\to\infty} q_{2t+|w|}(w)/q_{2t}(\emptyword)$ for $w\in \Sigma_2$. The existence of this limit is straightforward, but it seems challenging to determine its value. This limit is can be reduced to counting certain $+1,-1$ walks on $\Z$. Lemma~\ref{lem:greedyuniform} provides some monotonicity for $c(w)$ and implies $c(w)\sim \frac 1{2|w|}$ as $|w|\to\infty$. We offer the following example:
\begin{conjecture}We have $c(01)=\pi^2/6-1$.
\end{conjecture}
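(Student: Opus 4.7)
The plan is to reduce the conjecture to a generating-function calculation on positive $\pm 1$ walks, then evaluate the result via a dilogarithm identity.

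First, I would apply renewal theory to the Markov chain $(B_t)$ on $\Sigma_2$. Since $|B_t|$ is a reflected null-recurrent simple random walk on $\N$, the chain $(B_t)$ returns to $\emptyword$ almost surely, and the ratio $q_{2t+2}(01)/q_{2t}(\emptyword)$ converges to $\mu(01)$, the expected number of times the buffer equals $01$ during a single excursion of $(B_t)$ starting and ending at $\emptyword$ (this is Kac's formula applied to the aperiodic reduction of the chain, using that the period-2 parities align). Each such excursion is parameterized by (i) a positive $\pm 1$ walk $W=(W_t)$ from $0$ to $0$ of random even length (with $\PP[|W|=2n] = C_{n-1}/2^{2n-1}$), together with (ii) an initial bit $c_1 \in \{0,1\}$ chosen uniformly; once these data are fixed, the greedy rule forces all subsequent bits and hence $B_t$ at every time.

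Second, I would give a clean combinatorial description of the event $B_t = 01$ in terms of $W$. Using the FIFO structure of the buffer, the $j$-th character of $B_t$ (from the left) equals $c_1 \oplus (L_j(t) \bmod 2)$, where $L_j(t)$ is the depth of the $j$-th surviving up-step of $W$ in the ``tag tree'' defined by the parent map $i \mapsto i - W_{U_i-1}$ on up-steps $U_i$. Consequently, at a time with $W_t = 2$, the buffer lies in $\{01,10\}$ (rather than $\{00,11\}$) iff the two surviving up-steps have tags of opposite parity. Averaging over $c_1$, one obtains $\mu(01) = \tfrac12\,\Ex[N(W)]$, where $N(W)$ counts height-2 visits of $W$ satisfying this parity condition.

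Third, I would encode $\Ex[N(W)]$ in a bivariate generating function $F(x,u) = \sum_E \PP[E]\, x^{|E|}\, u^{N(E)}$ over positive excursions $E$. The standard self-similar decomposition of a positive excursion---initial up-step, $\mathrm{Geom}(1/2)$-many sub-excursions one level up, final down-step---yields a functional equation for $F$. By introducing auxiliary statistics tracking the tags of the first two buffer characters at each height-$2$ visit (a $4$-state internal variable encoding the local buffer state in $\{00,01,10,11\}$), I expect this equation to become a solvable coupled system whose series involves $\log$ and $\mathrm{Li}_2$. Extracting $c(01) = \tfrac12 \partial_u F(x,1)|_{x \to 1}$ should then produce the identity $\mathrm{Li}_2(1) - 1 = \pi^2/6 - 1$, with the $-1$ coming from the $n=1$ excursion which trivially does not visit height $2$.

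The principal obstacle is the third step: the tag-parity statistic interacts in a subtle way with the self-similar decomposition, because the parity of $L_j$ at a height-$2$ visit depends recursively on the tag pattern at sub-excursions to height $3$, which in turn depend on sub-sub-excursions to height $4$, and so on. Carefully bookkeeping this through the recursion---likely by a transfer matrix that records the joint distribution of the top two buffer characters at each ``base visit'' of a sub-excursion---is where the argument will either reveal the factor $\pi^2/6$ via the standard Catalan generating function identity $\sum_{n \ge 2} \tfrac{1}{n^2}$ arising from weighted sums over peak heights (noting that $\PP[\mathrm{peak} \ge n] = 1/n$ by gambler's ruin), or will expose a deeper obstruction requiring genuinely new ideas.
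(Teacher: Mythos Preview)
The paper does not prove this statement: it is posed as an open \emph{conjecture} in the concluding remarks, and the only evidence offered is numerical (``by solving a relaxed recurrence relation, we know this is within $10^{-8}$ of the truth''). So there is no proof in the paper for your proposal to be compared against, and any argument you produce would be new.

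As for the proposal itself, it is a plan rather than a proof, and you flag this yourself. Step~1 is sound in spirit: the greedy chain on $\Sigma_2$ is null-recurrent, and a Doeblin-type ratio limit theorem does identify $c(w)$ with the expected number of visits to $w$ in an excursion away from $\emptyword$, which is a clean reformulation. Step~2 is where I would push back. Your description of the $j$-th buffer character as $c_1\oplus(L_j(t)\bmod 2)$ via a ``tag tree'' is asserted without justification, and I do not see that it matches the actual greedy dynamics: the buffer is a FIFO queue whose contents are determined by which up-steps of $W$ have not yet been popped, but the \emph{value} of each surviving bit is $s(\text{its push time})$, and whether that bit is $0$ or $1$ depends on the first character of the buffer at the moment of the push, not obviously on a depth parity in any tree. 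You would need to state and prove this bijection carefully before building on it. Step~3 you already concede is the crux and may fail; the closing heuristic that $\sum_{n\ge 2}1/n^2$ should emerge from $\PP[\text{peak}\ge n]=1/n$ is suggestive but is not connected to the parity statistic $N(W)$ you actually need to compute.

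In short: the target is genuinely open in the paper, your renewal reduction is a reasonable starting point, but the combinatorial identity in Step~2 needs a real proof (or a counterexample), and without it Step~3 cannot get off the ground.
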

By solving a relaxed recurrence relation, we know this is within $10^{-8}$ of the truth. It would be interesting to describe $c(w)$ more generally.

\subsection{A Boosted Greedy Algorithm for $k$-ary words}

The greedy algorithm given by~\eqref{eq:greedydefn} generalizes directly to the problem of $k$-ary shuffle squares. A direct enumeration of the `successful' runs of this algorithm gives the lower bound $|\cS_k(n)|\geq \frac 1{n+1}{2n\choose n}(k-1)^n=(4k-4)^{n-o(n)}$ for all $k$. To improve this lower bound, we present a boosted greedy algorithm for $k$-ary words which succeeds exponentially more often.

\begin{proposition}\label{prop:karyboosted} For $k\geq 1$,
\[
\lim_{n\to\infty}|\cS_k(n)|^{1/n}\geq 4k-4+2\log 2+O(1/k)>4k-2.61371+O(1/k).
\]
Moreover,
\[
\lim_{n\to\infty}|\cS_3(n)|^{1/n}\geq 8.133.
\]
\end{proposition}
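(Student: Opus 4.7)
The plan is to design a boosted greedy algorithm for $k$-ary words, directly generalizing the three-phase scheme of~\cref{section:boostedgreedy}. The plain $k$-ary greedy uses the same recurrence~\eqref{eq:greedydefn} over the alphabet $[k]$; its buffer-length walk takes an up-step with probability $(k-1)/k$ and a down-step with probability $1/k$. Counting Dyck-like walks weighted by the $k-1$ non-matching letter choices per up-step yields the base bound $|\cS_k(n)|\geq\binom{2n}{n}(k-1)^n/(n+1)=(4k-4)^{n-o(n)}$ noted in the text. The boosted algorithm augments the buffer with indicator characters $\rm{i}$ as in~\cref{section:boostedgreedy}: starting from a buffer of the form $c^K$ for some $c\in[k]$, a cycle runs in three phases mirroring~\cref{lem:indicatorphase,lem:turnoverphase,lem:activationphase}, reading until $K$ more copies of $c$ have appeared (interleaving non-$c$ letters with indicators), absorbing the next $c$-run plus one non-$c$ letter, then consuming letters using each indicator as a wildcard that can be interpreted either as a reserved $c$ or as a non-$c$ letter from the input.

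Following the template of~\cref{lem:algorithm_step}, each cycle processes $\Theta(k)$ input letters in expectation, gives the buffer a constant negative drift, and lets each activated indicator contribute a factor of $2$ to the count of valid matchings (since it may be freely interpreted as a reserved $c$ or as a matched non-$c$). An analog of~\cref{lem:boosted_algorithm} then shows that the set of inputs on which the boosted algorithm succeeds is larger by a multiplicative factor of $2^{n/(2(k-1))+o(n)}$ over the plain greedy. Combined with the base count, this gives
\[
|\cS_k(n)|\geq(4k-4)^{n-o(n)}\cdot 2^{n/(2(k-1))+o(n)}=\bigl(4k-4+2\log 2+O(1/k)\bigr)^{n-o(n)},
\]
establishing the first inequality, with $4k-4+2\log 2>4k-2.61371$.

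The main obstacle is adapting the drift and variance computations of~\cref{claim:Ybounds,claim:C'bounds,claim:secondorderapproxBoostedsequence} to the $k$-ary setting. Since the buffer is no longer confined to $\Sigma_2$ and indicators must admit interpretations across all $k-1$ non-$c$ letters, tracking the quasi-buffer through the three phases requires substantially more careful case analysis. A subtle point to verify is that the factor-of-$2$ indicator boost genuinely survives the larger letter pool (rather than being diluted by a factor of $1/(k-1)$) and that distinct algorithm traces correspond to distinct shuffle-square partitions, so that the count gives a valid lower bound on $|\cS_k(n)|$ rather than on the total number of (word, partition) pairs.

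For the explicit bound $|\cS_3(n)|^{1/n}\geq 8.133$, the asymptotic formula $(4k-4+2\log 2+O(1/k))^n$ is too loose at $k=3$ since the $O(1/k)$ term is unquantified. A direct analysis of the ternary boosted algorithm is required: one optimizes the target buffer size $K$ and indicator-placement strategy specifically for $k=3$, then numerically computes the per-cycle drift, variance, and boost factor — or equivalently solves the generating function for the ternary boosted walk's return to the empty state — to obtain the constant $8.133$.
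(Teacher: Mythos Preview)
The paper omits the full proof but sketches an approach that differs from yours in two essential respects, and your proposal has a genuine gap.

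First, the paper does \emph{not} generalize the three-phase scheme of \cref{section:boostedgreedy}. Its $k$-ary algorithm uses indicators of a different kind: they come in \emph{pairs} $(a,\tau)$ recording that a single letter $a$ may sit in either of two positions in the buffer, generated locally whenever a match of $a$ is followed by a run $c_1\cdots c_r a$ with all $c_i\notin\{a,b\}$ (where $b$ is the current buffer head). The boost is that when such a pair reaches the front, \emph{two} input letters ($a$ or $b$) trigger a down-step instead of one. Your proposed direct lift of the indicator/turnover/activation phases does not give ``constant negative drift'' for $k\ge 3$ under uniform input: starting from $c^K$ and reading until $K$ more copies of $c$ appear, the expected number of interleaved non-$c$ letters is $(k-1)K$, so the buffer \emph{grows} by a factor of roughly $k-1$ per cycle, not shrinks. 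The binary drift analysis relied crucially on the walk being already balanced.

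Second, and more seriously, your counting mechanism is circular. You argue that each activated indicator ``contributes a factor of $2$ to the count of valid matchings'' and conclude that the set of successful \emph{inputs} grows by $2^{n/(2(k-1))+o(n)}$. But extra partitions per word do not produce extra words; $|\cS_k(n)|$ counts words, and your caveat that ``distinct algorithm traces correspond to distinct shuffle-square partitions'' is precisely the wrong injectivity. The paper sidesteps this by a device you do not mention: it designs a \emph{non-uniform} distribution $\mu$ on $[k]^{2n}$ under which the (deterministic) boosted algorithm succeeds with constant probability, and then uses the entropy of $\mu$ to lower-bound the number of successful words. The $2\log 2$ gain arises because at an indicator-head position the balanced $\mu$ (tuned so the buffer-length walk is recurrent) can spread mass over two down-step letters instead of one, raising the per-letter entropy there. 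Without this entropy argument, there is no way to convert the algorithm's per-word flexibility into a larger word count, and the numerical bound $8.133$ for $k=3$ likewise comes from optimizing $\mu$ rather than from a return-probability generating function.
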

We omit the proof in this manuscript. The idea is to give a \textit{non-uniform} distribution $\mu$ on input words in $[k]^{2n}$ such that the following boosted algorithm has a reasonable probability of success. Then, one can use the entropy of $\mu$ to show that the algorithm succeeds on a large-cardinality set of input words.
\vskip 5pt
\noindent \textbf{The Algorithm.} We construct quasi-buffers $(W_t)_t\in \big([k]\cup ([k]\times \N)\big)^*$ where pairs $(a,\tau)\in[k]\times \N$ are called `indicators'. We use the recursive definition that $W$ is a \textit{quasi-buffer} for $s[1,t]$ if either $W\in B_t(s)$ or for any partition $W=u_1(a,\tau)u_2(a,\tau)u_3$, both of $u_1u_2au_3$ and $u_1au_2u_3$ are quasi-buffers. In our construction, tuples $(a,t)$ will always appear \textit{in pairs}; these indicate that the letter `$a$' can be in either of \textit{two} positions in the buffer. We also maintain that the first instance of an indicator $(a,\tau)$ is followed by some letter $b\neq a$. We proceed recursively. 

If $W_{t-1}=\emptyword$ or $s(t)\neq W_{t-1}(1)\in[k]$, then as usual $W_t=W_{t-1}\circ s(t)$. If $s(t)=W_{t-1}(1)$, then as usual $W_t=\tail(W_{t-1})$. If $W_{t-1}(1)=(a,\tau)$ for some $\tau\in \N$, we recall $W_{t-1}(2)=b$ for some $b\neq a$. Also there is exactly one other position $j\geq 3$ such that $W_{t-1}(j)=(a,\tau)$; i.e. we can write $W_{t-1}=(a,\tau)bu_2(a,\tau)u_3$ for some words $u_2,u_3$. If $s(t)=a$, then we let $W_{t-1}'=abu_2u_3$, and let $W_t=\tail(W_{t-1}')$ as usual. If $s(t)=b$, then we let $W_{t-1}'=bu_2au_3$, and let $W_t=\tail(W_{t-1}')$. The ``boost" of this algorithm is that when $W_{t-1}(1)$ is an indicator, we are twice as likely to decrease the buffer length. Next, we outline the process of generating indicator pairs. 

Whenever $W_{t-1}$ or $W_{t-1}'=abu_1$ with $a\neq b$ and $s(t)=a$, recall that we set $W_t=bu_1$. Moreover, we record $\tt{last\_match}=a$. Then if the next letters of $s$ are $c_1\cdots c_ra$ with $r>0$ and $c_1,\ldots,c_r\notin \{a,b\}$, then by inspection we have both $W_{t+r+2}=bu_1c_1\cdots c_ra\in B_{t+r+2}$ and also $bu_1ac_1\cdots c_r\in B_{t+r+2}$. We record this information by inserting indicators into the quasi-buffer, redefining $W_{t+r+2}:=bu_1(a,t)c_1\cdots c_r(a,t)$. The indicator generation process succeeds exactly when $s(t+1)=c_1\notin\{a,b\}$, and then the next occurrence in $s$ of $a$ appears before $b$. If the process fails we reset $\tt{last\_match}=\tt{None}$. 

\small

\end{document}